\newtheorem{theorem}{Theorem}[section]
\newtheorem{lemma}[theorem]{Lemma}
\newtheorem{prop}[theorem]{Proposition}
\newtheorem{crlr}[theorem]{Corollary}
\newtheorem{corollary}[theorem]{Corollary}
\newtheorem{teo}{Theorem}
\theoremstyle{definition}
\newtheorem{definition}[theorem]{Definition}
\newtheorem{remark}[theorem]{Remark}
\newtheorem*{question*}{Question}
\newcommand{\cd}{{\rm cd}}
\newcommand{\rk}{{\rm rk}}
\newcommand{\lk}{{\rm lk}}
\newcommand{\Out}{{\rm Out}}
\newcommand{\Inn}{{\rm Inn}}
\newcommand{\Aut}{{\rm Aut}}
\newcommand{\GL}{{\rm GL}}
\newcommand{\Ext}{{\rm Ext}}
\newcommand{\st}{{\rm st}}
\newcommand{\Q}{\mathbb Q}
\newcommand{\Z}{\mathbb Z}
\newcommand{\N}{\mathbb N}
\newcommand{\R}{\mathbb R}
\newcommand{\X}{\mathfrak{X}}
\newcommand{\FP}{\operatorname{FP}}
\newcommand{\PAut}{\operatorname{PAut}}
\newcommand{\POut}{\operatorname {POut}}
\newcommand{\Der}{\operatorname {Der}}
\newcommand{\g}{\mathfrak{g}}
\newcommand{\h}{\mathfrak{h}}
\newcommand{\al}{\mathfrak{a}}
\newcommand{\im}{\operatorname {Im}}
\newcommand{\id}{\mathrm{id}}
\newcommand{\gr}{\mathrm{gr}_\bullet}
\newcommand{\ad}{\operatorname{ad}}
\theoremstyle{definition}
\numberwithin{equation}{section}
\begin{document}

\title{Pure symmetric automorphisms, extensions of RAAGs, and Koszulness}

\author[C.~Mart\'inez-P\'erez]{Conchita Mart\'inez-P\'erez}
\address{Departamento de Matem\' aticas\\
        Universidad de Zaragoza\\
         50009 Zaragoza, Spain}
\email{conmar@unizar.es}

\author[L.~Mendon\c{c}a]{Luis Mendon\c{c}a}
\address{Instituto de Ci\^encias Exatas\\
        Universidade Federal de Minas Gerais\\
        31270-901 Belo Horizonte, Brazil}
\email{luismendonca@mat.ufmg.br}

\thanks{2010 {\em Mathematics Subject Classification} 20F28, 20F36, 20F65, 20J05, 17B55, 16S37.}

\begin{abstract}
We characterize in terms of a combinatorial condition on the graph $\Gamma$ when the group $\PAut(A_\Gamma)$ of pure symmetric automorphisms of the RAAG $A_\Gamma$ and its outer version $\POut(A_\Gamma)$  have a descending central Lie algebra which is Koszul. To do that, we prove that our combinatorial condition implies that these groups are iterated extensions of RAAGs; in particular, they are poly-free. On the other hand, we show that $\PAut(F_n)$ is not poly-finitely generated free for $n \geq 4$. We also show that groups in a certain class containing $\PAut(A_\Gamma)$ are 1-formal.
\end{abstract}

\maketitle

\section{Introduction}
The group of pure-symmetric automorphisms of a right-angled Artin group $A_\Gamma$ is the subgroup $\PAut(A_\Gamma) \subseteq \Aut(A_\Gamma)$ consisting of the automorphisms that send each standard generator $v \in \Gamma$ to a conjugate $v^g$. Among these are the \emph{partial conjugations}, that is, the automorphisms $\phi = c_L^v$ with
\[ w^\phi =
\begin{cases}
w^v & \text{if $w \in L$}, \\
w & \text{if $w \notin L$},
\end{cases}
\]
where $L$ is a connected component of $\Gamma \smallsetminus \st_\Gamma(v)$ and $\st_\Gamma(v)$ is the star of $v$ in $\Gamma$. In fact Laurence proves in \cite{Laurence} that $\PAut(A_\Gamma)$ is generated by all partial conjugations.

The following terms are useful in the analysis of relations holding among partial conjugations. For a pair of non-adjacent vertices $v,w \in \Gamma$, the connected components $D_w \subseteq \Gamma \smallsetminus \st(v)$ and
$D_v \subseteq \Gamma \smallsetminus \st(w)$ that contain $w$ and $v$, respectively, will be called \emph{dominant} components. A connected component $L\subseteq \Gamma\smallsetminus\st(w)$ such that $L\subseteq D_w$ or the other way around will be called \emph{subordinate}, and a subgraph $L \subseteq \Gamma$ that is a connected component of both $\Gamma \smallsetminus \st(v)$ and $\Gamma \smallsetminus \st(w)$ will be called a \emph{shared} component. Notice that all these notions are relative to the pair $(v,w)$, and it can be shown that any connected component of $\Gamma \smallsetminus \st(v)$ or $\Gamma \smallsetminus \st(w)$ falls into one of these three possibilities. Following \cite{CRSV}, we say that $v$ and $w$ form a SIL-pair (for \emph{separating intersection of links}) if $v$ and $w$ have at least one shared component.

A presentation of $\PAut(A_\Gamma)$ is as follows. It is the group generated by all possible partial conjugations $c_L^v$, and the following defining relations:
\begin{enumerate}
 \item $[c_L^v, c_K^w]=1$ if $v \in \st(w)$, or one of the components $L$ or $K$ is subordinate, or $L$ and $K$ are distinct shared components,
 \item $[c_L^v, c_L^w c_K^w]=1$ when $L$ is shared and $K$ is dominant.
 \end{enumerate}
For more details, see \cite[Chapter~3]{Toinet}.

The special case of $\PAut(F_n)$, where $F_n$ is free of rank $n$, is also known as the McCool group, the group of welded pure braids or the group of basis-conjugating automorphisms. If $\{x_1, \ldots, x_n\}$ is a free basis of $F_n$, we denote for simplicity by $c_{ij}$ the partial conjugation $c_{\{x_i\}}^{x_j}$. So $\PAut(F_n)$ is generated by all $c_{ij}$, with $1 \leq i \neq j \leq n$ and has defining relators
\[ [c_{ij},c_{kj}], \  \ [c_{ij},c_{kl}] \ \ \text{ and } \ \ [c_{ij}, c_{jk}c_{ik}]\]
for $i,j,k,l$ all distinct.

Some recent results give evidence that $\PAut(A_{\Gamma})$ is in some sense close to a RAAG if the defining graph $\Gamma$ is not too complicated. For instance, Koban and Piggott showed that $\PAut(A_\Gamma)$ is isomorphic to a RAAG if and only if $\Gamma$ has no SIL-pairs \cite{KobanPiggott}. On a second level, Day and Wade  gave a more general combinatorial condition on $\Gamma$ which determines whether the image $\POut(A_\Gamma)$ of $\PAut(A_\Gamma)$ in the group $\Out(A_\Gamma)$ of outer automorphisms is a RAAG or not; see \cite[Theorem~B]{DayWade2018}.

RAAGs have been analyzed frequently (see e.g. \cite{DuchampKrob}) through the $\N$-graded Lie algebra
\[\gr(A_{\Gamma}) = (\oplus_{n=1}^\infty \gamma_n(A_\Gamma)/\gamma_{n+1}(A_\Gamma))\otimes k\]
built from its lower central series and tensored with a field $k$ of characteristic $0$.
It turns out that these Lie algebras have the same commutator-relator presentations coming from the combinatorial structure of the graph as the RAAGs themselves, that is, they are the ``right-angled Artin Lie algebras'' associated with the graphs.

An important result in this perspective is the one by Froberg \cite{Froberg} showing that such Lie algebras are \emph{Koszul}: there is a linear graded projective resolution
\[ \cdots \to P_n \to P_{n-1} \to \cdots \to P_1 \to P_0 \to k \to 0\]
of the trivial $U(\gr(A_{\Gamma}))$-module $k$. Linear here means that the graded module $P_i$ is generated by its component of degree $i$. This is a classical niceness homological condition for the enveloping algebra $U(\gr(A_{\Gamma}))$.

Our first goal in this paper is to analyze the Koszul property for the $\N$-graded Lie algebras associated with $\PAut(A_{\Gamma})$. Apart from the cases when $\PAut(A_{\Gamma})$ is already a RAAG, so that their algebras are Koszul by Froberg's theorem, the first step comes from the result of Conner and Goetz \cite{ConnerGoetz}: $\gr(\PAut(F_n))$ is Koszul if and only if $n \leq 3$. This can be turned into a more general obstruction for Koszulness of $\gr(\PAut(A_{\Gamma}))$ by means of following combinatorial condition on the graph $\Gamma$:

\begin{itemize}
 \item[(*)] $\Gamma$ does not contain four pairwise non-adjacent vertices $v_1, v_2, v_3, v_4 \in \Gamma$ that lie in four distinct connected components of $\Gamma \smallsetminus \cap_{i=1}^4\lk(v_i)$
\end{itemize}

Here $\lk(v_i)$ denotes the link of the vertex $v_i$ in $\Gamma$.  We prove:

\begin{teo} \label{teoA}
The graded Lie algebra $\gr(\PAut(A_{\Gamma}))$ is Koszul if and only if $\Gamma$ satisfies condition (*).
\end{teo}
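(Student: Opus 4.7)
The argument splits into the two implications.

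For the forward direction (Koszul $\Rightarrow$ (*)) I would argue the contrapositive and exhibit $\PAut(F_4)$ as a retract of $\PAut(A_\Gamma)$ whenever (*) fails. Pick four pairwise non-adjacent vertices $v_1,\ldots,v_4$ lying in four distinct connected components $C_1,\ldots,C_4$ of $\Gamma \smallsetminus \bigcap_i \lk(v_i)$. For each pair $i \neq j$, the connected component $L_{ij}$ of $\Gamma \smallsetminus \st(v_j)$ containing $v_i$ is well-defined, and a direct connectivity argument (each $L_{ij}$ is connected and disjoint from $\bigcap_k \lk(v_k)$) forces $L_{ij} \subseteq C_i$; hence for fixed $j$ the $L_{ij}$ are pairwise disjoint. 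The assignment $c_{ij} \mapsto c_{L_{ij}}^{v_j}$ then gives a section $\PAut(F_4) \hookrightarrow \PAut(A_\Gamma)$, with matching retraction $c_L^v \mapsto c_{ij}$ when $v = v_j$ and $L = L_{ij}$, and $c_L^v \mapsto 1$ otherwise. I would verify via a case analysis on the SIL structure that each defining relation of $\PAut(A_\Gamma)$ maps to a valid relation of $\PAut(F_4)$, the relation $[c_L^v, c_L^w c_K^w]=1$ accounting for the triangle relation $[c_{ij}, c_{jk} c_{ik}]=1$. Since group retracts descend to retracts of the descending central Lie algebras and then to (Hopf) retracts of their enveloping algebras, and Koszulness is inherited by such retracts, Koszulness of $\gr(\PAut(A_\Gamma))$ would force Koszulness of $\gr(\PAut(F_4))$, contradicting \cite{ConnerGoetz}.

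For the converse I would rely on the two structural results to be established elsewhere in the paper: under (*), $\PAut(A_\Gamma)$ is built as an iterated extension of RAAGs, and groups in the relevant class are \emph{1-formal}. Thus $\PAut(A_\Gamma)$ sits at the top of a finite tower $1 \to R_i \to G_{i+1} \to G_i \to 1$ with each kernel $R_i$ a RAAG and $G_0$ trivial. The plan is to prove Koszulness of $\gr(G_{i+1})$ by induction on $i$: the base case is Froberg's theorem, and each inductive step combines two ingredients. First, 1-formality upgrades the group extension to an exact sequence of graded Lie algebras $0 \to \gr(R_i) \to \gr(G_{i+1}) \to \gr(G_i) \to 0$ together with a PBW-type identification $U(\gr(G_{i+1})) \cong U(\gr(R_i)) \otimes U(\gr(G_i))$ of graded vector spaces. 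Second, a Koszul extension principle says that whenever two Koszul graded algebras assemble into such a PBW-flat extension the total algebra remains Koszul -- a graded deformation argument refining the elementary stability of Koszulness under tensor products.

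The main obstacle will be the inductive step of the converse. The hard point is not the trivial-action case, which reduces to tensor products, but controlling the genuinely nontrivial action of $G_i$ on the RAAG $R_i$: one must ensure the Lie-algebra extension does not introduce higher relations that would break flatness or Koszulness. This is exactly where 1-formality is essential; without it the associated graded sequence may fail to be exact and Massey products can obstruct the PBW decomposition. Additional care is needed to align the combinatorial condition (*) with the specific poly-free decomposition produced by the structural theorem, so that Froberg's theorem applies at every level of the tower.
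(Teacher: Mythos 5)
The forward direction matches the paper's Lemma~\ref{l1} and Proposition~\ref{obstruction} essentially verbatim: exhibit $\PAut(F_4)$ as a retract of $\PAut(A_\Gamma)$ when (*) fails, observe that retracts pass to split quotients of the graded Lie algebras, and invoke Conner--Goetz together with stability of Koszulness under split quotients (the paper's Proposition~\ref{Koszul.prop}(1)). One small improvement the paper makes that you should incorporate: your sets $L_{ij}$ are a priori $j$-dependent, but the paper shows that the single component $L_i$ of $\Gamma \smallsetminus \cap_k \lk(v_k)$ containing $v_i$ is simultaneously a connected component of $\Gamma\smallsetminus\st(v_j)$ for every $j\neq i$, which is what guarantees that the retraction and section are mutually inverse on the nose.

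Your converse direction has the right skeleton (iterated RAAG extensions, induction with Fr\"oberg as base case) but contains two genuine gaps. First, 1-formality does \emph{not} upgrade the group extension $1\to N\to G\to Q\to 1$ to a short exact sequence of graded Lie algebras; the map $\gr(N)\to\gr(G)$ need not be injective without extra hypotheses. What the paper actually uses (see Section~7) is the Falk--Randell theorem, which requires trivial action of the quotient on $N/N'$. Since the Day--Wade group sequences are not split, the paper has to build, with substantial work in Section~\ref{sec:presentation}, an auxiliary semidirect product $\hat G = \hat P \ltimes \hat H$ with a normal subgroup $Z$ so that $\hat G/Z\cong G$, apply Falk--Randell to $\hat G$, and then run a diagram chase to extract exactness for $G$. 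In the paper, 1-formality is used only to obtain the quadratic presentation of $\gr(\PAut(A_\Gamma))$ (Theorem~\ref{teoB}), not to produce these exact sequences. Second, your ``Koszul extension principle'' does not require a deformation argument, and your concern about a ``genuinely nontrivial action'' is misplaced: in the paper's Proposition~\ref{Koszul.prop}(2), one runs the LHS spectral sequence $\Ext_{U(\g/\h)}^{p,\bullet}(k,\Ext_{U(\h)}^{q,\bullet}(k,k))\Rightarrow\Ext_{U(\g)}^{p+q,\bullet}(k,k)$ and notes that the $\g/\h$-action strictly increases internal degree while fixing cohomological degree, so once $\Ext_{U(\h)}$ is diagonal the action is forced to be trivial. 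The $E_2$ page is then a tensor product of diagonal bigraded modules, and diagonality of $\Ext_{U(\g)}$ follows from convergence. No PBW-flatness or graded deformation is needed beyond the standard fact that $U(\g)$ is free over $U(\h)$.
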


To prove the theorem above we need to work with a concrete presentation of $\gr(\PAut(A_{\Gamma}))$. The key to obtain it is the concept of \emph{formality}. Recall that a group $G$ is \emph{$1$-formal} if its Malcev completion is a quadratically presented, complete Lie algebra.

\begin{teo} \label{teoB}
For any finite graph $\Gamma$, the groups $\PAut(A_{\Gamma})$ and $\POut(A_{\Gamma})$ are $1$-formal. As a consequence, $\gr(\PAut(A_{\Gamma}))$ has a presentation where the generators correspond to the partial conjugations $c_L^v$, and the defining relators are
\begin{enumerate}
 \item $[c_L^v, c_K^w]=0$ if $v \in \st(w)$, or one of the components $L$ or $K$ is subordinate, or both $L$ and $K$ are shared but distinct, and
 \item $[c_L^v, c_L^w + c_K^w]=0$ when $L$ is shared and $K$ is dominant.
\end{enumerate}
Moreover, a presentation of $\gr(\POut(A_{\Gamma}))$ is obtained from the above by including the relations $\sum_L c_L^v$, where the sum runs through all connected components $L$ of $\Gamma \smallsetminus \st(v)$, for each $v \in \Gamma$.
\end{teo}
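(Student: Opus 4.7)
The plan is to first establish 1-formality of $\PAut(A_\Gamma)$, and to derive the rest of the statement by a linearization argument. For 1-formality, my approach is to introduce an abstract class of commutator-relator groups (as hinted at in the abstract) for which 1-formality can be proved structurally, and then verify that $\PAut(A_\Gamma)$ lies in it. A natural candidate is a class built from iterated semidirect decompositions into pieces that are either free or RAAG-like, combined with a Papadima--Suciu-type criterion ensuring that the Malcev completion is quadratic (for instance, via the existence of a quadratic CDGA model on a $K(\pi,1)$, or the vanishing of low-dimensional Massey products). An alternative route, leveraging the known 1-formality of the McCool group $\PAut(F_n)$ (Berceanu--Papadima), is to realize $\PAut(A_\Gamma)$ as a retract or controlled quotient of a product of McCool groups, and transfer 1-formality along such a map.

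Granted 1-formality, the claimed presentation of $\gr(\PAut(A_\Gamma))$ follows by linearizing Toinet's presentation. By 1-formality, the Malcev completion of $\PAut(A_\Gamma)$ coincides with the completion of the holonomy Lie algebra, and hence $\gr(\PAut(A_\Gamma))\otimes k$ is presented by the \emph{symbols} of the group relators. The commutator $[c_L^v, c_K^w]=1$ has symbol $[c_L^v, c_K^w]=0$, while the mixed relation $[c_L^v, c_L^w c_K^w]=1$ has symbol $[c_L^v, c_L^w + c_K^w]=0$, using that group multiplication linearizes to addition in degree $1$ of $\gr$ and that the Lie bracket is bilinear.

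For $\POut(A_\Gamma)$ I would use the exact sequence
\[ 1 \to K \to \PAut(A_\Gamma) \to \POut(A_\Gamma) \to 1, \]
where $K$ is the subgroup of pure symmetric inner automorphisms. The group $K$ is generated by the full conjugations $c_v := \prod_L c_L^v$ (product over the connected components of $\Gamma\smallsetminus\st(v)$), one for each vertex $v$, and these linearize in the Lie algebra to the elements $\sum_L c_L^v$. Quotienting the Lie presentation of $\gr(\PAut(A_\Gamma))$ by the ideal generated by these elements then yields the asserted presentation of $\gr(\POut(A_\Gamma))$. To justify the quotient step rigorously, one also needs the 1-formality of $\POut(A_\Gamma)$; a standard argument is to observe that the image of $K$ in the abelianization is a direct summand, so the Malcev completion of the quotient coincides with the quotient of the Malcev completion, and quadraticity passes through.

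The main obstacle is the initial 1-formality step: it is a genuinely infinitesimal property and cannot be read off from Toinet's quadratic group presentation, because higher Massey product obstructions could \emph{a priori} be present. Identifying the correct abstract class—broad enough to include every $\PAut(A_\Gamma)$, but structured enough to force the vanishing of such obstructions—is where the bulk of the technical work will lie.
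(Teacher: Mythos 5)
Your overall plan — find an abstract class of commutator-relator groups for which $1$-formality can be proved structurally, verify membership, then linearize the Toinet presentation — matches the paper's strategy exactly, and the linearization step is carried out the same way (symbols of group relators become Lie relators, with group multiplication becoming addition in degree one). But the crucial ingredient is missing, and you say so yourself: you never identify the right abstract class or prove the $1$-formality criterion, you only list vague candidates (iterated semidirect decompositions, CDGA models, Massey products, retracts of products of McCool groups). None of these is what the paper does, and some (e.g.\ realizing $\PAut(A_\Gamma)$ as a retract of a product of McCool groups) are not obviously feasible — in fact the paper proves the \emph{reverse} retraction, $\PAut(F_n)$ as a retract of $\PAut(A_\Gamma)$, under extra hypotheses.

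The paper's actual criterion (its Theorem~\ref{1formalthm}) is much more concrete and elementary: a finitely presented group $G$ with generating set $X$ is $1$-formal provided all relators have the form $[x_1\cdots x_m,\,y_1\cdots y_n]$ with $x_i,y_j\in X$, \emph{and} whenever such a word is a relator the brackets $[x_i,x_j]$, $[y_i,y_j]$ are also relators. The proof is a direct Baker--Campbell--Hausdorff computation: after killing the degree-$2$ brackets $[\bar{x}_i,\bar{x}_j]$ and $[\bar{y}_i,\bar{y}_j]$, the BCH formula collapses $\log(e^{\bar{x}_1}\cdots e^{\bar{x}_m})$ to $\bar{x}_1+\cdots+\bar{x}_m$, so $\log\sigma(r)$ becomes the quadratic element $[\bar{x}_1+\cdots+\bar{x}_m,\,\bar{y}_1+\cdots+\bar{y}_n]$. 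The closure-under-subcommutators hypothesis is exactly what forces the higher-order BCH terms to vanish, so the Massey-product obstruction you were worried about genuinely cannot arise for presentations of this shape. Recognizing that Toinet's presentation of $\PAut(A_\Gamma)$ satisfies this hypothesis (because $c_L^w$ and $c_K^w$ always commute) is the key observation your proposal does not reach.

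For $\POut(A_\Gamma)$, the paper does not quotient the Malcev completion by the ideal generated by $\sum_L c_L^v$; instead it performs Tietze transformations on the \emph{group} presentation, eliminating one generator per full conjugation relator, and checks that the resulting presentation again satisfies the hypotheses of Theorem~\ref{1formalthm}. This keeps the whole argument inside the same elementary framework. Your proposed "standard argument" — that $1$-formality and quadraticity pass to quotients because the image of $\Inn(A_\Gamma)$ in $H_1$ is a direct summand — is not a theorem one can simply cite, and even if it can be made to work it requires more care than you give (you would have to verify that the closed ideal generated by $\log\sigma(\Inn(A_\Gamma))$ in $E_{\PAut(A_\Gamma)}$ is exactly the degree-one ideal $\langle\sum_L c_L^v\rangle$, and that killing a degree-one subspace of a quadratic completion yields a quadratic completion). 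The Tietze route sidesteps all of this.
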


As we only need to apply $1$-formality to obtain the Lie algebra presentations, we will not need to enter into the details of the theory. A short sketch of what we need is in Section~\ref{sec.formality}; for more details see e.g. \cite{SuciuWang2}.

We note that a version of this result for the McCool groups is already considered in \cite{BerceanuPapadima}, and some related results about relations holding in the Lie algebra associated with $\PAut(F_n)$ were considered in \cite{Cohen} too.

The argument for Theorem~\ref{teoA} depends also on a certain decomposition of the given Lie algebras as iterated extensions of right-angled Artin Lie algebras. Such decomposition actually occurs already on the level of groups, which leads to the second main goal of this article.

Day and Wade developed in \cite{DayWade} a theory of relative automorphisms of RAAGs. They consider certain subgroups $\Out(A_{\Gamma}, \mathcal{G}, \mathcal{H}^t) \leq \Out(A_\Gamma)$ determined by outer automorphisms that preserve (resp. act trivially on) subgroups in a given family $\mathcal{G}$ (resp. $\mathcal{H}$) of special subgroups of $A_{\Gamma}$. Roughly, they prove that a certain finite-index subgroup $\Out^0(A_\Gamma)$ of $\Out(A_\Gamma)$ admits a subnormal series in which the successive quotients are the ``irreducible'' relative automorphism groups, which turn out to be either free abelian, isomorphic to some $\GL_n(\Z)$, or a Fouxe-Rabinovitch group.

We show that these decompositions, applied to our situation of pure symmetric automorphisms and under the hypothesis that condition (*) holds,  return subnormal series with factors being always RAAGs (in this paper, RAAG means \emph{finitely generated} RAAG). Below we denote
 \[\POut(A_\Gamma,\mathcal{G},\mathcal{H}^t)\coloneqq \Out^0(A_\Gamma,\mathcal{G}\cup\mathcal{C},(\mathcal{H}\cup\mathcal{C})^t),\]
where  $\mathcal{C} = \{ \langle v \rangle \mid v \in \Gamma\}$ is the family of cyclic special subgroups of $A_\Gamma$.

 \begin{teo} \label{teoC}
 Let $\mathcal{G}$ and $\mathcal{H}$ be sets of special subgroups of $A_\Gamma$. Then there is a subnormal series
 \[1=N_0\trianglelefteq N_1\trianglelefteq\cdots\trianglelefteq N_t=\POut(A_\Gamma,\mathcal{G},\mathcal{H}^t)\]
 such that each factor $N_i/N_{i-1}$ is either free abelian or a Fouxe-Rabinovitch group. If the graph $\Gamma$ has property (*), then these Fouxe-Rabinovitch groups are extensions of RAAGs, so $\POut(A_\Gamma,\mathcal{G},\mathcal{H}^t)$ is poly-RAAG.
 \end{teo}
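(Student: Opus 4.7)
My plan is to apply Day--Wade's decomposition theorem for relative outer automorphism groups from \cite{DayWade} to our group $\POut(A_\Gamma,\mathcal{G},\mathcal{H}^t) = \Out^0(A_\Gamma,\mathcal{G}\cup\mathcal{C},(\mathcal{H}\cup\mathcal{C})^t)$. Their result produces a subnormal series whose successive quotients are free abelian, isomorphic to some $\GL_n(\Z)$, or Fouxe--Rabinovitch groups. The first step is to show that no $\GL_n(\Z)$ factor appears in this particular series. This follows because including the cyclic special subgroups $\mathcal{C}$ in both $\mathcal{G}$ and $\mathcal{H}^t$ requires every automorphism in our group to send each generator $v$ to a conjugate of itself; the transvection-type generators that produce the $\GL_n(\Z)$ pieces in Day--Wade's construction violate this condition and are therefore absent from our relative group. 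This establishes the first assertion of the theorem.

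For the second assertion, assume (*) and consider a typical Fouxe--Rabinovitch factor $FR(G_1,\dots,G_m)$ in the series. In Day--Wade's construction, each $G_i$ is itself a RAAG subgroup of $A_\Gamma$ determined by a connected component of a link-complement subgraph produced by the algorithm. The central combinatorial step I would carry out is to show that (*) forces $m \leq 3$. The argument should proceed by contradiction: assuming $m \geq 4$, pick a vertex $v_i$ from each factor $G_i$ and trace through the sequence of restrictions and projections applied to reach this factor, extracting a subgraph of the form $\Gamma \smallsetminus \bigcap_{i=1}^4 \lk(v_i)$ in which the $v_i$ are pairwise non-adjacent and sit in four distinct connected components. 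This contradicts (*) and hence bounds $m \leq 3$.

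With the bound $m \leq 3$ secured, each Fouxe--Rabinovitch factor is an extension of RAAGs. The case $m=2$ reduces to a standard inner/outer splitting in which $FR(G_1,G_2)$ is an extension with both kernel and quotient built from $G_1$ and $G_2$, hence a RAAG extension. The case $m=3$ admits a classical Jensen--Wahl/Collins-type fibration that decomposes $FR(G_1,G_2,G_3)$ as an iterated extension whose successive factors are RAAGs assembled from vertices inside the $G_i$. Refining the Day--Wade series accordingly produces a subnormal series for $\POut(A_\Gamma,\mathcal{G},\mathcal{H}^t)$ in which every quotient is a finitely generated RAAG, yielding the poly-RAAG conclusion.

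The principal obstacle I anticipate is the combinatorial translation from (*) to the bound $m \leq 3$ in the middle paragraph. This requires careful bookkeeping of how the iterative restriction and projection operations in Day--Wade's algorithm interact with the added cyclic family $\mathcal{C}$, and a precise description of the free product decomposition underlying each Fouxe--Rabinovitch factor in terms of link-intersection subgraphs of $\Gamma$. Establishing the structural result for $m=3$ factors (reducing $FR(G_1,G_2,G_3)$ to a RAAG-by-RAAG extension when the $G_i$ are RAAGs rather than just infinite cyclic) is a secondary technical hurdle, but one that should follow from the poly-freeness of $\POut(F_3)$ together with standard semidirect product manipulations at the level of partial conjugations.
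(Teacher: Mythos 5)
Your high-level strategy matches the paper's: run Day--Wade's induction (on the complexity $(n,m)$), observe that the $\GL_n(\Z)$ factors disappear because adjoining $\mathcal{C}$ to $\mathcal{G}$ and $\mathcal{H}$ kills transvections and inversions, and then show that under $(*)$ each Fouxe--Rabinovitch factor arises from a free product with at most three factors, whence McCullough--Miller's structure theorems give RAAG-by-RAAG decompositions. Your description of the $m\leq 3$ structure is also essentially right, though the precise form used in the paper is McCullough--Miller's $\mathrm{OFR}(G_1\star G_2)=\Inn(G_1)\times\Inn(G_2)$ and $\mathrm{OFR}(G_1\star G_2\star G_3)=(G_1\star G_2\star G_3)\rtimes(\Inn(G_1)\times\Inn(G_2)\times\Inn(G_3))$, rather than a Jensen--Wahl-type fibration, and it applies to general free products rather than through any reduction to $\POut(F_3)$.

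The genuine gap is exactly where you flag it: the step ``$(*)$ forces $m\leq 3$.'' Your sketch is to pick one vertex $v_i$ from each $\mathcal{G}$-component and argue directly that they violate $(*)$ by sitting in four distinct components of $\Gamma\smallsetminus\bigcap_{i=1}^4\lk(v_i)$. This cannot be done directly: the Fouxe--Rabinovitch factor occurs at a stage where you are looking at a $\POut$-invariant subgraph $\Delta\subseteq\Gamma$ (obtained after several restrictions), and condition $(*)$ does \emph{not} pass to subgraphs. The paper handles this with two ingredients you would need to supply. First, it replaces ordinary connectivity with $\overline{\mathcal{C}}_\Delta$-connectivity (where $\overline{\mathcal{C}}$ is the set of all special subgroups preserved by $\POut(A_\Gamma)$), and proves that a $\POut(A_\Gamma)$-preserved $\Delta$ has at most three $\overline{\mathcal{C}}_\Delta$-connected components. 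Second, the proof of that bound is not a direct extraction of a bad configuration: assuming four representatives $v_1,\dots,v_4$, $(*)$ gives a path $p$ between two of them inside $\Gamma\smallsetminus\bigcap\lk(v_i)$, and one then builds the subgraph $\Omega$ spanned by the stars of the internal vertices of $p$, shows via the criterion that $\Omega\smallsetminus\st(w)$ is connected for each $w\notin\Omega$ (so $A_\Omega\in\overline{\mathcal{C}}$), and derives the contradiction either from $A_{\Omega\cap\Delta}\in\overline{\mathcal{C}}_\Delta$ or from a case analysis when $\Delta\subseteq\Omega$. Without this machinery, the bound $m\leq 3$ does not follow. You would also want to address Case 5.1.4 of Day--Wade (nontrivial center): there one must check that the kernel of the surjection to $\POut(A_\Delta,\mathcal{G}_\Delta,\mathcal{G}_\Delta^t)$ is generated by transvections and therefore trivial in the pure-symmetric setting, making that step an isomorphism rather than an extension.
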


 Notice that in particular, by taking $\mathcal{G} = \mathcal{H} = \mathcal{C}$, we obtain a subnormal series for $\POut(A_\Gamma)$ with RAAG factors, as soon as (*) holds. Moreover, since $\POut(A_\Gamma) = \PAut(A_\Gamma)/\Inn(A_\Gamma)$ and $\Inn(A_{\Gamma})$ is a RAAG,  we have the following Corollary:
 
 \begin{teo} Let $\Gamma$ be a graph with property (*). Then both $\POut(A_\Gamma)$ and $\PAut(A_\Gamma)$ are poly (finitely generated) RAAGs.
\end{teo}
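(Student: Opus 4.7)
The plan is to deduce this as a direct consequence of Theorem~\ref{teoC}, with essentially no work beyond elementary manipulation of subnormal series and an identification of the inner automorphism subgroup as a RAAG.

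First I would treat $\POut(A_\Gamma)$ by specializing Theorem~\ref{teoC} to $\mathcal{G} = \mathcal{H} = \mathcal{C}$. With this choice the group $\POut(A_\Gamma,\mathcal{C},\mathcal{C}^t) = \Out^0(A_\Gamma,\mathcal{C},\mathcal{C}^t)$ consists of outer automorphisms that send each cyclic special subgroup $\langle v\rangle$ to a conjugate, which is precisely $\POut(A_\Gamma)$; the point to check is that no finite-index loss is incurred in the passage to $\Out^0$, i.e.\ that a pure symmetric outer automorphism acts trivially on each abelian quotient $\langle v\rangle / [\langle v\rangle, \langle v\rangle] = \langle v\rangle$, which holds because $v$ and its conjugate $v^g$ agree modulo commutators. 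Theorem~\ref{teoC} then yields a subnormal series of $\POut(A_\Gamma)$ whose factors are either free abelian (hence finitely generated RAAGs on edgeless graphs) or Fouxe--Rabinovitch groups that, under (*), fit into short exact sequences $1\to R_i\to N_i/N_{i-1}\to Q_i\to 1$ with $R_i,Q_i$ finitely generated RAAGs. Refining the series by splitting each such factor into two consecutive terms produces a subnormal series of $\POut(A_\Gamma)$ with finitely generated RAAG factors.

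Second I would pass from $\POut(A_\Gamma)$ to $\PAut(A_\Gamma)$ via the short exact sequence
\[ 1 \to \Inn(A_\Gamma) \to \PAut(A_\Gamma) \to \POut(A_\Gamma) \to 1. \]
The inner automorphism group $\Inn(A_\Gamma) \cong A_\Gamma / Z(A_\Gamma)$ is itself a finitely generated RAAG: the center of $A_\Gamma$ is generated by the clique of vertices adjacent to every other vertex, and the quotient is isomorphic to the RAAG on the induced subgraph of the remaining vertices. Pulling the subnormal series of $\POut(A_\Gamma)$ back along the projection $\PAut(A_\Gamma)\twoheadrightarrow\POut(A_\Gamma)$ and inserting $\Inn(A_\Gamma)$ as the bottom term gives a subnormal series of $\PAut(A_\Gamma)$ whose factors are finitely generated RAAGs.

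I expect no serious obstacle: the substantive content lies entirely in Theorem~\ref{teoC}, and the only verifications needed are the identification $\POut(A_\Gamma,\mathcal{C},\mathcal{C}^t) = \POut(A_\Gamma)$ discussed above and the standard description of $Z(A_\Gamma)$. Once these are in place, the argument is a routine refinement and extension of the series provided by Theorem~\ref{teoC}.
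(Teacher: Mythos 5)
Your proof is correct and follows the same route as the paper: it specializes Theorem~\ref{teoC} to $\mathcal{G}=\mathcal{H}=\mathcal{C}$ to get a subnormal series for $\POut(A_\Gamma)$ with finitely generated RAAG factors (after splitting the Fouxe--Rabinovitch terms), and then extends this through the short exact sequence $1\to\Inn(A_\Gamma)\to\PAut(A_\Gamma)\to\POut(A_\Gamma)\to 1$ using that $\Inn(A_\Gamma)\cong A_\Gamma/Z(A_\Gamma)$ is itself a finitely generated RAAG. The identification $\POut(A_\Gamma,\mathcal{C},\mathcal{C}^t)=\POut(A_\Gamma)$ you flag is established in Section~5 of the paper and your justification is in the same spirit.
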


 There has been a lot of interest recently about groups which admit (sub)normal series with \emph{free} factors. For instance all RAAGs admit such series: they are \emph{polyfree} in the sense that they admit subnormal series with (not necessarily finitely generated) free groups as factors \cite{DuchampKrob}. Notice that Theorem~\ref{teoC} implies that $\PAut(A_\Gamma)$ and $\POut(A_\Gamma)$ also admit such polyfree series.

 Another important aspect of Theorem~\ref{teoC} is that $N_{i+1}$ acts trivially on $N_i/N_i'$ for all $i$. Using this, we apply a theorem of Falk and Randell \cite{FalkRandell} to justify that we get a similar decomposition of the associated graded Lie algebras, which is used to prove Theorem~\ref{teoA}.

 The following remains unanswered:

 \begin{question*}
  If $\PAut(A_\Gamma)$ admits a subnormal series with (finitely generated) RAAGs as factors, does $\Gamma$ satisfy (*)?
 \end{question*}

As a partial result, we prove the following:

\begin{teo} \label{teoE}
 If $n \geq 4$, then $\PAut(F_n)$ and $\POut(F_n)$ are not poly-finitely generated free, that is, they do not admit  subnormal series with finitely generated free factors.
\end{teo}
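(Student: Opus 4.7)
The plan is to argue by contradiction via the Koszulness obstruction, reducing first to the $\PAut$ case: if $\POut(F_n)$ were poly-(finitely generated free), so would be $\PAut(F_n)$, by prepending $\Inn(F_n) \cong F_n$ to such a series. So suppose $G = \PAut(F_n)$ admits a subnormal series $1 = G_0 \trianglelefteq G_1 \trianglelefteq \cdots \trianglelefteq G_t = G$ with each $G_i/G_{i-1}$ a finitely generated free group $F_{k_i}$. The strategy is to show this forces $U(\gr(G) \otimes \Q)$ to be Koszul, contradicting Theorem~\ref{teoA}: the discrete graph $\Gamma_n$ on $n$ vertices (so that $A_{\Gamma_n} = F_n$) fails condition (*) as soon as $n \geq 4$, since any four of its vertices are pairwise non-adjacent and lie in four distinct connected components of $\Gamma_n$.

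The first and main step is to verify, rationally, the ``almost direct product'' hypothesis of Falk--Randell along the series: that the conjugation action of each $G_i$ on $H_1(G_{i-1}; \Q)$ is trivial. I would argue this using $1$-formality of $G$ from Theorem~\ref{teoB}, together with its interaction with the lower central series of $G$, refining the series via $\gamma_n(G) \otimes \Q$ if needed. The given subnormal series need not be almost direct on the nose, so some refinement or structural input from $1$-formality is required, and this is where I expect the main technical work. Once the hypothesis holds, Falk--Randell yields short exact sequences of graded Lie algebras
\[0 \to \gr(G_{i-1}) \otimes \Q \to \gr(G_i) \otimes \Q \to \gr(F_{k_i}) \otimes \Q \to 0,\]
exhibiting $\gr(G) \otimes \Q$ as an iterated extension of finitely generated free Lie algebras.

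To close the argument, I would invoke that the universal enveloping algebra of a finitely generated free Lie algebra is a tensor algebra (hence Koszul), and that Koszulness is preserved under such graded extensions via the PBW decomposition $U(L) \cong U(L') \otimes U(L'')$ as graded vector spaces, together with standard homological algebra of connected quadratic algebras. Thus $U(\gr(G) \otimes \Q)$ is Koszul, contradicting Theorem~\ref{teoA} and completing the proof.
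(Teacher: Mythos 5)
Your reduction to $\PAut(F_n)$ by prepending $\Inn(F_n)\cong F_n$ is fine, and your observation that the discrete graph on $n\geq 4$ vertices fails condition (*) is correct. But the core of the argument has a genuine gap: you need the Falk--Randell ``almost direct product'' hypothesis (that $G_i$ acts trivially on $H_1(G_{i-1};\Q)$) to hold along the given subnormal series with finitely generated free factors, and an arbitrary such series has no reason to satisfy it. Your proposed fix --- refining via $\gamma_m(G)\otimes\Q$, or extracting the triviality of the action from $1$-formality --- is not substantiated: intersecting the $G_i$ with the lower central series of $G$ will generally destroy the finitely-generated-free structure of the factors, and $1$-formality of $G$ (Theorem~\ref{teoB}) controls the Malcev Lie algebra of $G$ itself, not the conjugation action on the layers of some arbitrary polyfree filtration handed to you. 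A clean implication ``poly-(finitely generated free) $\Rightarrow$ $\gr$ Koszul'' would come close to resolving the Question the paper poses right after Theorem~\ref{teoC} and deliberately leaves open, so one should not expect it to drop out of a soft argument.

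The paper's actual proof is entirely different and far more elementary, avoiding Koszulness altogether. It reduces, as you do, to showing $G=\PAut(F_n)$ has no proper normal subgroup $N$ of type $\FP$ with $G/N$ free. Feldman's theorem gives $\cd_{\Q}(N)<\cd_{\Q}(G)=n-1$, so $N$ contains none of the rank-$(n-1)$ free abelian subgroups $\langle\X_j\rangle$, hence at least four of the $\X_j$ survive in $G/N$. A BNS-invariant argument (Lemma~\ref{epi.free}, resting on Orlandi-Korner's computation of $\Sigma^1(\PAut(F_n))$) then forces the free quotient $G/N$ to be cyclic, and multiplicativity of the Euler characteristic yields $\chi(G)=\chi(N)\cdot\chi(\Z)=0$, contradicting McCammond--Meier's formula $\chi(\PAut(F_{n+1}))=(-1)^n n^n$. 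If you want to salvage the Koszul route you would first have to prove that some polyfree series for $\PAut(F_n)$ can be made almost-direct, and I do not see how to do that.
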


This is more treatable than the general case because we can at the same time work cleanly with the cohomological dimension and the Euler characteristics of $\PAut(F_n)$ and of free groups, in contrast with arbitrary $\PAut(A_{\Gamma})$ and arbitrary RAAGs, respectively.

%%%%%%%%%%%%%%%%%%%%%%%%%%%%%%%%%%%%%%%%%%%%%%%%%%%%%%%%%%%%%%%%%%%%

\section{Formality and the associated graded Lie algebra}  \label{sec.formality}
In this section we introduce and study the formality properties for $\PAut(A_{\Gamma})$ and $\POut(A_\Gamma)$, following mainly Berceanu and Papadima\cite{BerceanuPapadima} and Papadima and Suciu \cite{PapadimaSuciu}. This will lead to the proof of Theorem~\ref{teoB}.

Let $k$ be field of characteristic $0$. A \textit{Malcev Lie} $k$-\textit{algebra} is a Lie algebra $E$ over $k$ endowed with a complete descending filtration by subspaces $\{F^r(E)\}_r$ such that
\begin{enumerate}
 \item $F^1(E) = E$;
 \item $[F^r(E), F^s(E)] \subseteq F^{r+s}(E)$ for all $r,s$;
 \item The associated graded Lie algebra $\gr(E) = \oplus_{r \geq 1} F^{r}(E)/F^{r+1}(E)$ is generated in degree $1$.
\end{enumerate}
Here \emph{complete} means that the natural map $E \to \varprojlim E/F^r(E)$ is an isomorphism.

For each Malcev Lie algebra $E$ there is an associated exponential group $\exp(E)$. It is
the group defined on the underlying set of $E$ with product given by the Baker-Campbell-Hausdorff formula.  These two points of view are connected by the
exponential power series and the logarithm, which are bijections between $E$ and $\exp(E)$. %In particular, $x \cdot y = \log(e^x e^y)$.
Furthermore, the filtration of $E$ induces a group filtration in $\exp(E)$, and it can be shown that $\gr(E) \cong \gr(\exp(E))$. Notice that here we use $\gr(-)$ both as the graded algebra coming from a filtered Lie algebra and for the usual $\gr(G)$ coming from the lower central series of a group.

A \textit{Malcev completion} for a group $G$ is a homomorphism $\sigma \colon G \to \exp(E)$, for some Malcev Lie algebra $E$, such that
the induced homomorphism on the graded Lie algebras $\gr(\sigma) \colon \gr(G) \to \gr(E)$ is an isomorphism. For a free group $F_n$ with free basis $x_1, \ldots, x_n$,
the homomorphism $\sigma_n \colon F_n \to \exp(E_n)$, where $E_n$ is the free complete Lie algebra with free basis $\bar{x}_1, \ldots, \bar{x}_n$ and
$\sigma_n(x_i)= e^{\bar{x}_i}$, is a Malcev completion.

Let $G = \langle x_1, \ldots, x_n \mid r_1, \ldots, r_m \rangle$ be a finitely presented group. Let $E_G$ be the complete Lie algebra generated by
$\bar{x}_1, \ldots, \bar{x}_n$ and with defining relators $\log( \sigma_n(r_j) )$, for $1 \leq j \leq m$. By \cite[Thm.~2.2]{Papadima}, the homomorphism $\sigma \colon G \to \exp(E_G)$, induced by $\sigma_n$, is a Malcev completion.

We say that a group $G$ is $1$-\textit{formal} if $E_G$ is quadratic, that is, $E_G$ is the quotient of a free complete Lie algebra by a closed
ideal generated by homogeneous elements of degree $2$. This is equivalent with $\gr(G)$ being quadratic and $E_G$ being isomorphic as a filtered
algebra to the completion of its associated graded algebra (see \cite{SuciuWang2}, for instance).

In order to show that $\PAut(A_\Gamma)$ and $\POut(A_{\Gamma})$ are $1$-formal, we prove the following generalization of \cite[Theorem~5.4]{BerceanuPapadima}.

\begin{theorem}  \label{1formalthm}
 Let $G$ be a group admitting a finite presentation with set of generators $X$ and relators of the form $[x_1 \cdots x_m, y_1 \cdots y_n]$, with
 $x_i,y_i \in X$ and $m,n \geq 1$. Suppose further that if $[x_1 \cdots x_m, y_1 \cdots y_n]$ is a relator, then so are $[x_i,x_j]$
 and $[y_i,y_j]$ for all $i,j$ occurring in the expression. Then $G$ is $1$-formal.
\end{theorem}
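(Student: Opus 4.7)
The plan is to show directly that the Malcev Lie algebra $E_G$ is isomorphic as a filtered algebra to the completion of an explicit quadratic graded Lie algebra. Let $\mathfrak{h}$ be the $\N$-graded Lie algebra over $k$ generated in degree one by symbols $\bar{x}$ ($x \in X$), subject to the \emph{quadratic} relations obtained by linearisation of the given presentation: for each commutator relator $[x_1 \cdots x_m, y_1 \cdots y_n] \in R$, impose $[\bar{x}_1 + \cdots + \bar{x}_m,\,\bar{y}_1 + \cdots + \bar{y}_n] = 0$ (which in the case $m = n = 1$ is just $[\bar{x},\bar{y}]=0$). Let $\hat{\mathfrak{h}}$ denote its degreewise completion and let $\hat{F}$ be the free complete Lie algebra on $\bar{X}$. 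Both $E_G$ and $\hat{\mathfrak{h}}$ are quotients of $\hat{F}$, and the goal is to exhibit the resulting map $\pi\colon E_G \to \hat{\mathfrak{h}}$ as an isomorphism of filtered complete Lie algebras; this immediately shows $E_G$ is quadratic and hence $G$ is $1$-formal.

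The main step is to verify that each defining relator $\log \sigma_n(r)$ of $E_G$ maps to zero in $\hat{\mathfrak{h}}$, so that $\pi$ is a well-defined continuous surjection. Fix $r = [x_1 \cdots x_m, y_1 \cdots y_n]$. The hypothesis ensures that all $[x_i,x_j]$ and $[y_i,y_j]$ are also relators of $G$, so in $\hat{\mathfrak{h}}$ the elements $\bar{x}_1,\ldots,\bar{x}_m$ pairwise commute, and likewise the $\bar{y}_j$'s. Consequently, in $\hat{\mathfrak{h}}$ the Baker-Campbell-Hausdorff series collapses to give $\log(e^{\bar{x}_1}\cdots e^{\bar{x}_m}) = U$ and $\log(e^{\bar{y}_1}\cdots e^{\bar{y}_n}) = V$, where $U := \bar{x}_1+\cdots+\bar{x}_m$ and $V := \bar{y}_1+\cdots+\bar{y}_n$. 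Therefore the image of $\log\sigma_n(r)$ in $\hat{\mathfrak{h}}$ equals $\log(e^U e^V e^{-U} e^{-V})$, which is a Lie series in $U, V$ that vanishes whenever $[U, V] = 0$; hence it belongs to the closed Lie ideal generated by $[U, V]$. Since $[U, V] = 0$ is precisely the relator of $\hat{\mathfrak{h}}$ attached to $r$, the element $\log\sigma_n(r)$ vanishes in $\hat{\mathfrak{h}}$, as required.

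To upgrade $\pi$ to an isomorphism, pass to associated graded Lie algebras. Clearly $\gr(\hat{\mathfrak{h}}) = \mathfrak{h}$, and $\gr(\pi)\colon \gr(E_G)\twoheadrightarrow \mathfrak{h}$ is surjective. Conversely, the degree-two parts of the defining relators $\log \sigma_n(r)$ of $E_G$ are exactly the relators of $\mathfrak{h}$, so by the universal property there is also a surjection $\mathfrak{h}\twoheadrightarrow \gr(E_G)$. Both maps are the identity on degree-one generators, hence their composition is the identity on all of $\mathfrak{h}$, so each is an isomorphism. Since $\pi$ is a continuous map of complete filtered Lie algebras inducing an isomorphism on associated gradeds, $\pi$ is itself an isomorphism, identifying $E_G$ with the completion of the quadratic Lie algebra $\mathfrak{h}$ and proving $1$-formality.

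The hard part is the BCH computation in the middle paragraph, particularly the reduction of $\log(e^{\bar{x}_1}\cdots e^{\bar{x}_m})$ to $U$ inside $\hat{\mathfrak{h}}$; this collapse is precisely what the auxiliary commutator-relator hypothesis is engineered to produce, and without it the long-product logarithms would contribute higher-order brackets not visibly contained in the quadratic ideal. The remaining ingredient, that $\log(e^U e^V e^{-U} e^{-V})$ lies in the Lie ideal generated by $[U,V]$ in the free Lie algebra on two generators, is a standard and well-known identity.
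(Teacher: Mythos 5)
Your proof is correct, and it genuinely differs from the paper's route. The paper invokes Papadima's result (\cite[Lemma 2.5]{Papadima2}, stating that killing the group commutator $\log(\{e^a,e^b\})$ in a free complete Lie algebra has the same effect as killing the Lie bracket $[a,b]$), applying it iteratively to replace each relator $\log\sigma_n(r)$ by a quadratic one: first the length-one commutator relators become brackets $[\bar{x},\bar{y}]$, and then modulo these the longer relators collapse via BCH to $[U,V]$. Your argument bypasses Papadima's lemma entirely: you build the candidate quadratic model $\mathfrak{h}$ outright, check that the defining relators of $E_G$ die in $\hat{\mathfrak{h}}$ (the same BCH collapse, but performed once, inside the target), and then upgrade the resulting surjection $\pi\colon E_G\to\hat{\mathfrak{h}}$ to an isomorphism by the standard device of comparing associated gradeds; the two-sided squeeze $\mathfrak{h}\twoheadrightarrow\gr(E_G)\twoheadrightarrow\mathfrak{h}$, together with ``iso on $\gr$ plus completeness implies iso,'' finishes it. What you buy is self-containedness and a cleaner logical structure (one global isomorphism rather than a sequence of presentation changes whose legitimacy is outsourced to the cited lemma); what the paper buys is brevity, since Papadima's lemma does all the filtered-algebra bookkeeping. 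Two small points worth making explicit in your write-up: the surjectivity of $\gr(\pi)$ is most cleanly seen from surjectivity in degree one plus the fact that $\mathfrak{h}$ is generated in degree one; and the statement that $\log(e^Ue^Ve^{-U}e^{-V})$ lies in the closed Lie ideal generated by $[U,V]$ deserves the one-line justification that the quotient by this ideal is the abelian Lie algebra on $U,V$, in whose exponential group commutators vanish.
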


\begin{proof}
To avoid confusions, we will denote by $\{\cdot, \cdot\}$ the group commutator on $\exp(E_G)$, thus keeping the usual brackets for the Lie algebra
operation on $E_G$.

Let $F$ be the free group with free basis $X$ and $E$ be the free complete Lie algebra with free basis $\bar{X} = \{\bar{x} \mid x \in X\}$. By the observations above, $E_G$ can be taken as the quotient of $E$ by the ideal generated by the elements $\log(\sigma(r))$, where $\sigma \colon F \to \exp(E)$ is the Malcev completion of $F$ and $r$ runs through the
defining relators of $G$.

Consider first a relator of the form $r=[x,y]$, with $x,y \in X$. Then $\log(\sigma(r))=\log(\{e^{\bar{x}},e^{\bar{y}}\}) \in E$. This is exactly the commutator of $\bar{x}$ and $\bar{y}$ when we see $\exp(E)$ as a group on the underlying set of $E$. By \cite[Lemma~2.5]{Papadima2}, killing this commutator
in $E$ amounts to the same as killing the bracket $[\bar{x},\bar{y}]$, now with usual operation of the Lie algebra $E$.

Now consider a relation of the form $r = [x_1 \cdots x_m, y_1 \cdots y_n]$ with $m > 1$ or $n > 1$, and $x_i,y_j \in X$. We have
$\log(\sigma(r))=\log(\{e^{\bar{x}_1} \cdots e^{\bar{x}_m}, e^{\bar{y}_1} \cdots e^{\bar{y}_n}\})$.
Again, killing $\log(\sigma(r))$ in $E$ is the same as killing the element $[\log(e^{\bar{x}_1} \cdots e^{\bar{x}_m}), \log(e^{\bar{y}_1} \cdots e^{\bar{y}_n})]$. Now, by hypothesis
$\log(\sigma([\bar{x}_i,\bar{x}_j]))$ and $\log(\sigma([\bar{y}_i,\bar{y}_j]))$ are relators for the appropriate indices. By the previous paragraph we may actually assume that $[\bar{x}_i,\bar{x}_j]$ and $[\bar{y}_i, \bar{y}_j]$ are relators.  For commuting elements $X,Y$ the Baker-Campbell-Hausdorff formula gives $e^X \cdot e^Y = e^{X+Y}$, so  we have:
\[\log(e^{\bar{x}_1} \cdots e^{\bar{x}_m}) = \log( e^{\bar{x}_1 + \ldots + \bar{x}_m} ) = \bar{x}_1 + \ldots + \bar{x}_m,\]
and similarly $\log(e^{\bar{y}_1} \cdots e^{\bar{y}_n}) = \bar{y}_1 + \ldots + \bar{y}_n$. So $\log(\sigma(r))$ is equivalent to the relator $[\bar{x}_1 + \ldots + \bar{x}_m, \bar{y}_1 + \ldots + \bar{y}_n]$.

All types of relators of $E_G$ are clearly homogeneous of degree $2$, so $G$ is $1$-formal.
\end{proof}

\begin{crlr}  \label{crlr1}
Under the same hypothesis of Theorem~\ref{1formalthm}, the graded Lie algebra $\gr(G)$ admits a presentation with $X$ as a generating set
and relators \[[x_1 + \ldots +x_m, y_1 + \ldots + y_n]\] whenever $[x_1 \cdots x_m, y_1 \cdots y_n]$ is a relator of $G$.
\end{crlr}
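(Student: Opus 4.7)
My plan is to read off the presentation of $\gr(G)$ directly from the Malcev completion $E_G$ constructed in the proof of Theorem~\ref{1formalthm}. The key reduction is that, since $\sigma\colon G\to\exp(E_G)$ is by definition a Malcev completion, one has an isomorphism $\gr(G)\cong\gr(E_G)$; so the problem becomes identifying the associated graded of $E_G$.

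Next I would return to the construction in Theorem~\ref{1formalthm}: it exhibits $E_G$ as the quotient of the free complete Lie algebra on $\bar{X}=\{\bar{x}\mid x\in X\}$ by the closed ideal $I$ generated by the elements $[\bar{x}_1+\ldots+\bar{x}_m,\bar{y}_1+\ldots+\bar{y}_n]$, one for each relator $[x_1\cdots x_m,y_1\cdots y_n]$ of $G$. The essential point is that these generators of $I$ are all homogeneous of degree $2$ with respect to the natural filtration of the free complete Lie algebra, so $E_G$ is a quadratic complete Lie algebra.

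Finally I would invoke the fact that, for a quadratic complete Lie algebra — a quotient of a free complete Lie algebra by a closed ideal generated by homogeneous degree-$2$ elements — passing to the associated graded replaces the free complete Lie algebra by the ordinary free Lie algebra and the closed ideal by the graded ideal generated by those same relators. This is precisely what the $1$-formality of $G$ supplies, since it provides a filtered isomorphism between $E_G$ and the completion of $\gr(E_G)$. Relabelling $\bar{x}$ as $x$, this gives the stated presentation of $\gr(G)$.

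The only mildly technical step I anticipate is this last transfer from the complete to the graded presentation. It is morally routine — quadratic complete Lie algebras and their associated graded quadratic Lie algebras share the same presentation — but is exactly the piece of information that $1$-formality encodes, so I would phrase it as a direct appeal to the filtered isomorphism $E_G\cong\widehat{\gr(E_G)}$ that comes bundled with the conclusion of Theorem~\ref{1formalthm}.
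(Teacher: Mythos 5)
Your proposal is correct and matches the paper's approach: the paper's one-line proof (``$\gr(G)\cong\gr(E_G)$'') implicitly relies on exactly the computation you spell out, namely reading off $\gr(E_G)$ from the quadratic presentation of $E_G$ built in Theorem~\ref{1formalthm}. The only small imprecision is that the passage from the presentation of $E_G$ to that of $\gr(E_G)$ is a direct consequence of the generators of the closed ideal being homogeneous (so $\gr$ of the quotient is the quotient by the corresponding graded ideal), rather than something one needs to deduce from the filtered isomorphism $E_G\cong\widehat{\gr(E_G)}$, but this does not affect the argument.
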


\begin{proof}
 This follows from the defining property of Malcev completions, that is, $\gr(G) \cong \gr(E_G)$.
\end{proof}

The theorem and its corollary apply immediately to our situation.
\begin{proof}[Proof of Theorem~\ref{teoB}]
  It is clear that the presentation of $\PAut(A_\Gamma)$ satisfies the conditions of Theorem~\ref{1formalthm} and Corollary~\ref{crlr1}.  Now, for $\POut(A_{\Gamma})$, we obtain a presentation by adding one relator of the form $\prod_L c_L^v$ for each $v \in \Gamma$ that is not connected to every other vertex of $\Gamma$. The product runs trough all connected components of
$\Gamma \smallsetminus \st(v)$. We may choose arbitrarily a connected component $K$ of $\Gamma \smallsetminus \st(v)$ for each
$v \in \Gamma$, and then eliminate the generators $c_K^v$ from the presentation by means of Tietze transformations. One easily sees now that the new presentation is equivalent to one containing only relators of the form $[\prod c_L^v , \prod c_N^w]$, for some products $\prod c_L^v$ and $\prod c_N^w$ of (commuting) conjugations by the same vertex.
This satisfies the conditions of Theorem~\ref{1formalthm} and Corollary~\ref{crlr1} too. By including again the generators $c_K^v$, we arrive at the presentation we wanted for $\gr(\POut(A_\Gamma))$.
 \end{proof}

%%%%%%%%%%%%%%%%%%%%%%%%%%%%%%%%%%%%%%%%%%%%%%%%%%%%%%%%%%%%%%%%
 \section{Graded Lie algebras and Koszulness}
We consider here (Lie) algebras over $k$, a field of characteristic $0$. We say that a Lie algebra $\g$ is $\N$-graded if there is a vector space decomposition $\g = \oplus_{n=1}^\infty \g_n$ satisfying $[\g_n,\g_m] \subseteq \g_{n+m}$. A graded $\g$-module is a $\g$-module with a decomposition $A = \oplus_{n \in \Z} A_n$ satisfying $\g_n \cdot A_m \subseteq A_{m+n}$.

The universal enveloping algebra $U(\g)$ of an $\N$-graded Lie algebra admits a natural $\N_0$-grading. For any $\N$-graded Lie algebra we may consider the trivial module $k$ as a graded module with $k = k_0$. It makes sense then to consider the bigraded module
\[ \{\Ext_{U(\g)}^{p,q}(k,k)\}_{p,q}\]
where $p$ is the cohomological degree and $q$ is the internal degree, inherited from $\g$. We say that $\g$ is \emph{Koszul} if $\g$ is finitely generated by elements of degree one and and $\{\Ext_{U(\g)}^{p,q}(k,k)\}$ is \emph{diagonal}, that is, $\Ext_{U(\g)}^{p,q}(k,k) = 0$ whenever $p\neq q$. This is equivalent with saying that $U(\g)$ is Koszul in the sense of associative quadratic algebras (see e.g. \cite{Quadratic}).

Recall that an ideal $\h$ of a graded Lie algebra $\g = \oplus_{n=1}^\infty \g_n$ is \emph{homogeneous} if $\h = \oplus_{n=1}^\infty \h \cap \g_n$.

\begin{prop} \label{Koszul.prop}
 Let $\g$ be a graded Lie algebra and let $\h \subseteq \g$ be a homogeneous ideal.
 \begin{enumerate}
  \item If the quotient map $\g \to \g/\h$ splits and $\g$ is Koszul, then $\g/\h$ is Koszul.
  \item If $\h$ and $\g/\h$ are Koszul, then so is $\g$.
 \end{enumerate}
\end{prop}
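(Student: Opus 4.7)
The plan is to analyze both statements via the Hochschild--Serre homology spectral sequence
\[E^2_{p,q} = \Tor^{U(\g/\h)}_p\!\bigl(k,\Tor^{U(\h)}_q(k,k)\bigr) \Longrightarrow \Tor^{U(\g)}_{p+q}(k,k)\]
associated with the extension $0 \to \h \to \g \to \g/\h \to 0$. Because $\h$ is homogeneous, this spectral sequence can be constructed so as to respect the internal grading throughout. I will use the standard reformulation of Koszulness in terms of $\Tor$: a graded Lie algebra $\al$ generated in degree one is Koszul if and only if $\Tor^{U(\al)}_p(k,k)$ is concentrated in internal degree $p$ for every $p$.

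For part (1), the splitting $s \colon \g/\h \to \g$ and the projection $\pi \colon \g \to \g/\h$ induce algebra homomorphisms $U(\g/\h) \xrightarrow{U(s)} U(\g) \xrightarrow{U(\pi)} U(\g/\h)$ whose composite is the identity. The bar-complex computation of $\Tor^{U(-)}_*(k,k)$ is covariantly functorial in the ring, so this realizes $\Tor^{U(\g/\h)}_*(k,k)$ as a bigraded direct summand of $\Tor^{U(\g)}_*(k,k)$. Since the latter is concentrated on the diagonal, so is the former. Combined with the immediate observation that $\g/\h$ is generated in degree one (as a quotient of $\g$, which is itself generated in degree one by Koszulness), this yields Koszulness of $\g/\h$.

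For part (2), Koszulness of $\h$ makes $\Tor^{U(\h)}_q(k,k)$ concentrated in internal degree $q$. The conjugation action of $\g$ on $\h$ descends to a degree-preserving action of $\g/\h$, so this is a graded $U(\g/\h)$-module concentrated in degree $q$. Koszulness of $\g/\h$ provides a linear Koszul resolution $K_\bullet \to k$ with $K_p = U(\g/\h) \otimes_k V_p$ and $V_p$ placed in internal degree $p$. For any graded $U(\g/\h)$-module $M$ concentrated in internal degree $d$, the complex $K_\bullet \otimes_{U(\g/\h)} M \cong V_\bullet \otimes_k M$ lies in internal degree $p+d$ at position $p$; since the differentials preserve internal degree, the same holds for its homology $\Tor^{U(\g/\h)}_p(k,M)$. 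Taking $d=q$ shows $E^2_{p,q}$ is concentrated in internal degree $p+q$. As the differentials $d_r$ preserve internal degree, $E^\infty_{p,q}$ sits in the same internal degree; by convergence, $\Tor^{U(\g)}_n(k,k)$ is concentrated in internal degree $n$. A short check gives that $\g$ itself is generated in degree one: an element of $\g_n$ equals, modulo $\h$, an iterated bracket of lifts of degree-one elements of $\g/\h$ to $\g_1$, and the residual element of $\h_n$ is an iterated bracket of $\h_1 \subseteq \g_1$. Hence $\g$ is Koszul.

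The main technical point I expect to require care is the construction of a Hochschild--Serre spectral sequence that is fully compatible with the internal grading coming from $\g$; this can be obtained from a suitable doubly-graded filtration of the Chevalley--Eilenberg complex of $\g$ with trivial coefficients. Once this compatibility is pinned down, both parts reduce to routine use of Koszul resolutions together with the functoriality of $\Tor$.
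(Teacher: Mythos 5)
Your proof is correct and uses essentially the same strategy as the paper: the splitting/retraction for part (1) and the Hochschild--Serre / Lyndon--Hochschild--Serre spectral sequence of the ideal $\h$ for part (2), merely phrased in $\Tor$ rather than $\Ext$. The one small variation is in part (2), where you compute $E^2_{p,q}$ directly by tensoring a linear Koszul resolution of $k$ over $U(\g/\h)$ against the degree-concentrated module $\Tor^{U(\h)}_q(k,k)$, while the paper reaches the same conclusion by first noting that the $\g/\h$-action on $\Ext_{U(\h)}(k,k)$ must be trivial for degree reasons and then invoking the resulting tensor-product decomposition; these are two formulations of the same diagonality argument.
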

\begin{proof}
 For item (1), assume that $\g$ is Koszul. So $\g$ is finitely generated, which implies that $\g/\h$ is finitely generated too. Now, let $\sigma \colon \g/\h \to \g$ be a splitting to the quotient map $\pi \colon \g \to \g/\h$. Then the induced graded maps
  \[ \sigma^{\ast} \colon \Ext_{U(\g)}^{\bullet,\bullet}(k,k) \to \Ext_{U(\g/\h)}^{\bullet,\bullet}(k,k)\  \ \text{  and  }\  \
  \pi^{\ast} \colon \Ext_{U(\g/\h)}^{\bullet,\bullet}(k,k) \to \Ext_{U(\g)}^{\bullet,\bullet}(k,k)\]
  preserve degree and satisfy $\sigma^\ast \circ \pi^\ast = \id$. In particular, $\Ext_{U(\g/\h)}^{\bullet,\bullet}(k,k)$ is a quotient of the diagonal bigraded module $\Ext_{U(\g)}^{\bullet,\bullet}(k,k)$, so it is diagonal too. This proves (1).

  Next, we consider item (2). Clearly $\g$ is finitely generated if both $\h$ and $\g/\h$ are. Assume that both $\Ext_{U(\h)}^{\bullet,\bullet}(k,k)$ and $ \Ext_{U(\g/\h)}^{\bullet,\bullet}(k,k)$ are diagonal. The associated graded LHS sequence is of the form
  \[ E_2^{\{p,q\},\bullet} =  \Ext_{U(\g/\h)}^{p,\bullet}(k, \Ext_{U(\h)}^{q,\bullet}(k,k)) \implies \Ext_{U(\g)}^{p+q,\bullet}(k,k),\]
 where the internal degree $\bullet$ is preserved. Notice that the action of $\g/\h$  increases
the internal degree, but preserves the cohomological one. Since $\Ext_{U(\h)}^{\bullet,\bullet}(k,k)$ is diagonal, we find that the action is trivial, so
    \[ E_2^{\{p,q\},\bullet}  \cong  \Ext_{U(\g/\h)}^{p,\bullet}(k,k) \otimes \Ext_{U(\h)}^{q,\bullet}(k,k) \cong \Ext_{U(\g/\h)}^{p,p}(k,k) \otimes \Ext_{U(\h)}^{q,q}(k,k).\]
 In particular, $E_2^{\{p,q\},n} = 0$ whenever $n \neq p+q$. From the convergence of the sequence we deduce that $\Ext_{U(\g)}^{m,n}(k,k)=0 $ whenever $m \neq n$, that is, $\Ext_{U(\g)}^{\bullet,\bullet}(k,k)$ is diagonal.
\end{proof}

\begin{prop}\label{polyRAAGLie} Let $\g$ be a finitely generated graded Lie algebra with a series of subalgebras
\[0=\g_0 \triangleleft \g_1 \triangleleft\ldots\triangleleft \g_{s-1}\triangleleft \g_s=\g\]
such that each $\g_i$ is an homogeneous ideal in $\g_{i+1}$ and the quotient $\g_{i+1}/\g_i$ is Koszul for all $i$.
Then $\g$ is Koszul.
\end{prop}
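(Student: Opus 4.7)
The plan is to induct on the length $s$ of the filtration, using Proposition~\ref{Koszul.prop}(2) as the engine. For the base case $s=1$, the assertion that $\g = \g_1/\g_0$ is Koszul is simply the hypothesis.

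For the inductive step, I would consider the truncated filtration
\[ 0 = \g_0 \triangleleft \g_1 \triangleleft \ldots \triangleleft \g_{s-1}. \]
Each $\g_i$ (for $i \leq s-2$) is by hypothesis a homogeneous ideal in $\g_{i+1}$, and each successive quotient $\g_{i+1}/\g_i$ is Koszul, so this truncated chain satisfies the hypotheses of the proposition applied to $\g_{s-1}$. Before invoking induction one must check that $\g_{s-1}$ is itself finitely generated, since finite generation is built into the definition of Koszulness. This follows by a secondary induction along the filtration: if $\g_i$ is finitely generated, then so is $\g_{i+1}$, because it is spanned as a Lie algebra by generators of the ideal $\g_i$ together with lifts of a finite generating set of the (Koszul, hence finitely generated in degree one) quotient $\g_{i+1}/\g_i$. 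The induction hypothesis then yields that $\g_{s-1}$ is Koszul.

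At this point $\g_{s-1}$ is a homogeneous ideal in $\g = \g_s$, it is Koszul, and the quotient $\g/\g_{s-1} = \g_s/\g_{s-1}$ is Koszul by hypothesis. Applying Proposition~\ref{Koszul.prop}(2) to this extension delivers the conclusion that $\g$ is Koszul. I do not expect a genuine obstacle: the only subtlety is keeping track of the finite-generation clause in the definition of Koszulness and confirming that it propagates along the filtration, which the argument above handles directly.
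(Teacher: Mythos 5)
Your proof is correct and follows essentially the same route as the paper's, namely induction on the length of the series using Proposition~\ref{Koszul.prop}(2). The only addition is your explicit check that finite generation propagates along the filtration, a point the paper leaves implicit but which is indeed worth noting since it is built into the definition of Koszulness.
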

\begin{proof}
By induction, using Part (2) of Proposition~\ref{Koszul.prop}.
\end{proof}

Now, we specialize to the groups of pure symmetric automorphisms of RAAGs and their associated Lie algebras.

\begin{lemma} \label{l1}
 Suppose that a graph $\Gamma$ contains $n$ pairwise non-adjacent vertices $v_1, \ldots, v_n$ such that each $v_i$ lies in a distinct connected component of $\Gamma \smallsetminus \cap_{i=1}^n\lk(v_i)$. Then:
 \begin{enumerate}
  \item $\PAut(F_n)$ is a retract of $\PAut(A_\Gamma)$, and
  \item $\gr(\PAut(F_n))$ is a split quotient of $\gr(\PAut(A_{\Gamma}))$.
 \end{enumerate}
\end{lemma}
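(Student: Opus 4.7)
The plan rests on a combinatorial observation. Set $W = \bigcap_{k=1}^n \lk(v_k)$ and, for $i = 1, \ldots, n$, let $C_i$ denote the connected component of $\Gamma \smallsetminus W$ containing $v_i$; by hypothesis the $C_i$ are pairwise distinct. I first observe that $C_i \cap \st(v_k) = \emptyset$ for every $k \neq i$: an edge $u v_k$ with $u \in C_i$ would lie in $\Gamma \smallsetminus W$ (since $u, v_k \notin W$) and would connect $C_i$ to $C_k$, which is impossible. Hence $C_i \subseteq \Gamma \smallsetminus \st(v_k)$ for every $k \neq i$, and a short connectedness argument then shows that the component $D_{ij} \subseteq C_i$ of $\Gamma \smallsetminus \st(v_j)$ containing $v_i$ does not depend on the choice of $j \neq i$. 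I denote this common subgraph by $D_i$; it is a \emph{shared} component with respect to every pair $(v_j, v_k)$ with $j, k \neq i$.

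The section $s \colon \PAut(F_n) \to \PAut(A_\Gamma)$ is defined on generators by $c_{ij} \mapsto c_{D_i}^{v_j}$. To verify that this respects the three defining relations of $\PAut(F_n)$: the relator $[c_{ij}, c_{kj}]$ maps to a commutator of two partial conjugations by the same vertex $v_j$, covered by relation~(1); the relator $[c_{ij}, c_{kl}]$ with $i,j,k,l$ distinct maps to a commutator along the distinct shared components $D_i, D_k$ for the pair $(v_j, v_l)$, which is another instance of relation~(1); and $[c_{ij}, c_{jk} c_{ik}]$ maps to $[c_{D_i}^{v_j}, c_{D_j}^{v_k} c_{D_i}^{v_k}]$, which is precisely relation~(2) with shared $L = D_i$ and dominant $K = D_j$, after using that conjugations by the same vertex commute.

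The retraction $r \colon \PAut(A_\Gamma) \to \PAut(F_n)$ is defined on generators by $r(c_L^{v_j}) = c_{ij}$ if $L = D_i$ for some $i \neq j$, and $r(c_L^v) = 1$ in every other case; then $r \circ s = \id$ by construction. The main task is to verify that $r$ sends each relator of type~(1) or~(2) of $\PAut(A_\Gamma)$ to an identity in $\PAut(F_n)$. I split cases by whether $v, w$ belong to $\{v_1, \ldots, v_n\}$ and whether $L, K$ meet some $v_i$. I expect the main technical point to be handling \emph{subordinate} components: a subordinate $L$ for a pair $(v_j, v_l)$ lies inside a dominant component sitting in $C_j$ or $C_l$, so $L$ contains no $v_i$ and $r$ sends the corresponding generator to $1$. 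The remaining subcases yield either trivial commutators or the defining relators $[c_{ij}, c_{ml}]$ (for four distinct indices) or $[c_{ij}, c_{il} c_{jl}]$ of $\PAut(F_n)$. This establishes part~(1); part~(2) is then immediate from the functoriality of $\gr(-)$, which sends the pair $(s, r)$ to a retraction of graded Lie algebras, exhibiting $\gr(\PAut(F_n))$ as a split quotient of $\gr(\PAut(A_\Gamma))$.
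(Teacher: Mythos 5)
Your proposal is correct and follows essentially the same route as the paper's proof: identify that the components $C_i$ (the paper's $L_i$) are simultaneously connected components of each $\Gamma \smallsetminus \st(v_j)$ for $j \neq i$, then define the section and retraction by $c_{ij} \mapsto c_{C_i}^{v_j}$ and $c_L^{v_j} \mapsto c_{ij}$ (if $L = C_i$, else $1$), checking that the defining relators match up. You are, if anything, a bit more explicit than the paper about the need to verify that the retraction kills the relators involving generators outside $\{c_{C_i}^{v_j}\}$ (subordinate components, conjugations by vertices outside $\{v_1,\dots,v_n\}$); the paper compresses this into the remark that the two sets of generators satisfy the same relations.
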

\begin{proof}
 Denote by $L_i$ the connected component of $\Gamma \smallsetminus \cap_{j=1}^n \lk(v_j)$ that contains $v_i$. Notice that
 $L_i$ is also a connected component of $\Gamma \smallsetminus \st(v_j)$ for $i \neq j$. Indeed, if $L$
 is the connected component of $\Gamma \smallsetminus \st(v_j)$ containing $v_i$, then clearly $L \subseteq L_i$. On the other hand, if $L \subsetneq L_i$, then $L_i \cap \st(v_j) \neq \emptyset$, which implies that $v_j \in L_i$ by connectedness. This is a contradiction.

 Let $\Phi \colon \PAut(A_\Gamma) \to \PAut(F_n)$ be the homomorphism that sends $c_{L_i}^{v_j}$ to $c_{ij}$ and sends all the other generators to $1$. Since the $c_{L_i}^{v_j}$ and the $c_{ij}$ satisfy the same set of relations, both $\Phi$ and its right-inverse $\Psi \colon \PAut(F_n)  \to \PAut(A_\Gamma)$ are well-defined. This proves part (1), and  part (2) is completely analogous using the presentation given in Theorem~\ref{teoB}.
\end{proof}

The hypothesis we make in Lemma~\ref{l1} for $n=4$ is the negation of condition (*) considered in the Introduction. We restate it for convenience.

\begin{definition}
 Let $\Gamma$ be a graph. We say that $\Gamma$ has (*) if $\Gamma$ does not contain four   pairwise non-adjacent vertices $v_1, v_2, v_3, v_4 \in \Gamma$ that lie in four distinct connected components of $\Gamma \smallsetminus \cap_{i=1}^4\lk(v_i)$
\end{definition}

\begin{prop}  \label{obstruction}
Suppose that the graph $\Gamma$ does not satisfy condition (*). Then
$\gr(\PAut(A_{\Gamma}))$ is not Koszul.
\end{prop}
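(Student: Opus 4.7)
The plan is to combine Lemma~\ref{l1}(2), Proposition~\ref{Koszul.prop}(1), and the Conner--Goetz theorem mentioned in the introduction to derive a contradiction from assuming $\gr(\PAut(A_\Gamma))$ is Koszul.

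First I would unpack the hypothesis: since $\Gamma$ fails condition (*), there exist four pairwise non-adjacent vertices $v_1, v_2, v_3, v_4 \in \Gamma$ lying in four distinct connected components of $\Gamma \smallsetminus \cap_{i=1}^4 \lk(v_i)$. This is precisely the hypothesis of Lemma~\ref{l1} for $n=4$, so I would invoke part (2) of that lemma to exhibit a graded surjection $\gr(\PAut(A_\Gamma)) \twoheadrightarrow \gr(\PAut(F_4))$ together with a graded splitting. The kernel of this surjection is a homogeneous ideal in $\gr(\PAut(A_\Gamma))$, which is the form required by Proposition~\ref{Koszul.prop}(1).

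Now suppose, for contradiction, that $\gr(\PAut(A_\Gamma))$ were Koszul. By Proposition~\ref{Koszul.prop}(1) applied to the split homogeneous ideal above, it would follow that $\gr(\PAut(F_4))$ is also Koszul. But this contradicts the theorem of Conner and Goetz \cite{ConnerGoetz} cited in the introduction, which asserts that $\gr(\PAut(F_n))$ is Koszul if and only if $n \leq 3$; in particular $\gr(\PAut(F_4))$ is not Koszul. Hence $\gr(\PAut(A_\Gamma))$ cannot be Koszul.

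There is no serious obstacle here: the argument is essentially a two-line bootstrap from the obstruction for $n=4$ (Conner--Goetz) to the obstruction for arbitrary $\Gamma$ failing (*), using the retraction package of Lemma~\ref{l1} to transport non-Koszulness along split graded quotients. The only thing worth double-checking is that the map $\gr(\PAut(A_\Gamma)) \to \gr(\PAut(F_4))$ constructed from the presentation in Theorem~\ref{teoB} is indeed generated by partial conjugations of degree one and so is a homogeneous graded Lie algebra map — this is immediate from the proof of Lemma~\ref{l1}(2), which defines $\Phi$ and $\Psi$ by sending degree-one generators to degree-one generators.
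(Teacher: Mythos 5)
Your proposal is correct and is essentially identical to the paper's own proof: both invoke Lemma~\ref{l1}(2) to realize $\gr(\PAut(F_4))$ as a split graded quotient of $\gr(\PAut(A_\Gamma))$, then apply Proposition~\ref{Koszul.prop}(1) together with the Conner--Goetz non-Koszulness of $\gr(\PAut(F_4))$. The only difference is cosmetic: you spell out the contrapositive bootstrap more explicitly than the paper does.
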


\begin{proof}
 By applying Lemma~\ref{l1} we find that $\gr(\PAut(F_4))$ is a split quotient of $\gr(\PAut(A_{\Gamma}))$. Since  $\gr(\PAut(F_4))$ is not Koszul by \cite[Theorem~3.6]{ConnerGoetz}, we are done by Proposition~\ref{Koszul.prop}(1).
 \end{proof}

 This proves one direction of Theorem~\ref{teoA}. The other direction will come from a simultaneous analysis of the group and the Lie algebra by means of the subnormal series of Day and Wade.

\section{An obstruction to being poly-finitely generated free}
For the proof of Theorem~\ref{teoE}, we will need to recall some facts about the homological properties of the groups $\PAut(F_n)$.

Recall that the \emph{cohomological dimension} $\cd(G)$ of a group is the minimal length of a projective resolution of the trivial $\Z G$-module $\Z$. One may similarly consider the cohomological dimension $\cd_R(G)$ over a commutative ring $R$. If $\Z$ admits a finite-length resolution by finitely generated projective $\Z G$-modules, we say that $G$ is of \emph{type $\FP$}. Groups satisfying this property have a well-defined \emph{Euler characteristic } given by the alternating sum $\chi(G) = \sum (-1)^i \rk_\Z H_i(G,\Z)$; see \cite{BrownBook}.

 D.J. Collins \cite{Collins} proved that the McCool group $\PAut(F_n)$ is of type $\FP$ and has cohomological dimension $n-1$. Its Euler characteristic was then computed by J. McCammond and J. Meier \cite[Thm.~1.2]{McCammondMeier}:
\begin{equation} \label{euler}
\chi(\PAut(F_{n+1})) = (-1)^n n^n.
\end{equation}

Below we denote by $\X_j = \{ c_{i,j} \mid \forall 1 \leq i \leq n, \ \ i \neq j\}$ the subset of standard generators of $\PAut(F_n)$ that act with the same generator $x_j$ of $F_n$. Notice that $\langle \X_j \rangle \cong \Z^{n-1}$. In particular, any subgroup of $\PAut(F_n)$ containing some $\X_j$ has cohomological dimension $n-1$.

In the proof below we will use the BNS-invariant $\Sigma^1(G)$ of a finitely generated group \cite{BNS}. It is a set of homomorphisms $\varphi \colon G \to \R$ that contains the information of whether subgroups $N\leq G$ with $G' \leq N$ are finitely generated or not. We will need to use the basic fact that an element $\varphi \in \Sigma^1(G)$ induces $\bar{\varphi} \in \Sigma^1(G/N)$ if $\varphi(N) = 0$. Moreover, we will use that $\Sigma^1(F) = \emptyset$ if $F$ is free and non-cyclic.

\begin{lemma} \label{epi.free}
 Let $\pi \colon \PAut(F_n) \to F$ be  a homomorphism onto a free group, where $n \geq 4$. If at least $4$ of the $\X_j$ survive under $\pi$, then $F$ is cyclic.
 \end{lemma}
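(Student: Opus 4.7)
The plan is to derive a contradiction using the BNS invariant $\Sigma^1$ just introduced. Assume that $F$ has rank at least two, so that $\Sigma^1(F) = \emptyset$. My aim is to exhibit a nonzero character $\varphi \colon \PAut(F_n) \to \R$ satisfying $\varphi \in \Sigma^1(\PAut(F_n))$ and $\varphi(\ker \pi) = 0$. By the pullback property recalled in the preamble, the induced character $\bar{\varphi}$ on $F$ would then lie in $\Sigma^1(F)$, contradicting $\Sigma^1(F) = \emptyset$.

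For the construction of $\varphi$ I begin from the observation that each $\X_j \cong \Z^{n-1}$ is finitely generated abelian, so $\pi(\X_j)$ is a cyclic subgroup $\langle f_j \rangle \subseteq F$; the hypothesis yields four nontrivial generators $f_{j_1},\dots,f_{j_4}$. Since $\operatorname{Hom}(F,\R)$ has dimension $\rk(F) \ge 2$, there is room to choose a nonzero homomorphism $\bar{\varphi} \colon F \to \R$ with specified (say strictly positive) values on the $f_{j_i}$'s, and I set $\varphi := \bar{\varphi} \circ \pi$. By construction $\varphi$ is nonzero and factors through $\pi$.

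The technical heart of the argument is verifying that $\varphi \in \Sigma^1(\PAut(F_n))$. For this I would use the presentation of $\PAut(F_n)$ by the partial conjugations $c_{ij}$ subject to the three families of relations (commutators within each $\X_j$, commutators $[c_{ij},c_{kl}]=1$ for pairwise distinct indices, and the relations $[c_{ij},c_{jk}c_{ik}]=1$), together with the Bieri--Neumann--Strebel connectivity criterion: $\varphi \in \Sigma^1$ precisely when the positive subgraph $\{g : \varphi(g) \ge 0\}$ of the Cayley graph is connected. The positivity of $\varphi$ on at least one generator in each of the four $\X_{j_i}$'s, combined with the commutation relations that link generators belonging to different $\X_j$'s, should yield the required connectivity.

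The main obstacle is precisely this $\Sigma^1$-verification. It requires a careful combinatorial argument building paths through the positive cone and exploiting, in particular, the $[c_{ij},c_{jk}c_{ik}]=1$ relations to connect generators sitting in different $\X_j$'s. The delicate point is balancing the choice of $\bar{\varphi}$ with the combinatorial constraints of the presentation; this may also benefit from a known description of $\Sigma^1(\PAut(F_n))$ in the literature on McCool groups.
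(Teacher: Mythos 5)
Your argument runs the BNS-pullback machinery in the opposite direction from the paper's, and that direction has two genuine gaps. First, the existence of a suitable $\bar\varphi$ is not guaranteed: surviving under $\pi$ only means $\pi(\X_{j_i}) \neq 1$, and these cyclic subgroups could very well lie inside the commutator subgroup $F'$, in which case \emph{every} homomorphism $\bar\varphi \colon F \to \R$ kills $f_{j_i}$. So you cannot in general arrange for $\varphi = \bar\varphi \circ \pi$ to be nonzero on four of the $\X_{j_i}$'s, and the character you want to exhibit may fail to exist. (Having $\rk(F) \geq 2$ gives room to be nonzero on a generic collection of elements of $F^{ab}$, but says nothing about elements that vanish in $F^{ab}$.) Second, you leave the verification that $\varphi \in \Sigma^1(\PAut(F_n))$ as a promise ("should yield the required connectivity"), but this is precisely the content of the lemma: carrying it out from the connectivity criterion would amount to reproving the relevant portion of Orlandi-Korner's computation of $\Sigma^1(\PAut(F_n))$.

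The paper sidesteps both problems by reversing the direction. It takes $\varphi = \eta \circ \mathrm{ab} \circ \pi$ with $\eta$ injective; then $\Sigma^1(F) = \emptyset$ forces $\varphi \notin \Sigma^1(\PAut(F_n))$, and Orlandi-Korner's description of $\Sigma^1(\PAut(F_n))$ is invoked as a black box to constrain $\varphi$ to be supported (up to relabelling) on indices in $\{1,2,3\}$. The contradiction is then extracted at the level of $\pi$ itself: using the commutation relations $[c_{ij},c_{kj}]$ and $[c_{ij},c_{jk}c_{ik}]$, the paper shows that for a surviving $\X_j$ with $j \notin \{1,2,3\}$, the image $\pi(c_{ij})$ commutes with $\pi(c_{21})$ in the free group $F$; since commuting elements of a free group are powers of a common element and $\varphi(c_{21}) \neq 0$, this forces $\varphi(c_{ij}) \neq 0$, contradicting the constraint. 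Arguing with $\pi$ rather than with $\bar\varphi$ is exactly what makes the $F'$-obstruction irrelevant, so you would need to restructure your proof along these lines rather than trying to build a character lying inside $\Sigma^1(\PAut(F_n))$.
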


 \begin{proof}
  In what follows, we will use repeatedly the well-known fact that any set of commuting elements in a free group  generates a cyclic subgroup.

  Suppose for  a contradiction that $F$ is non-cyclic. Fix an embedding $\eta \colon F^{ab} \hookrightarrow \R$ and let $\varphi \colon \PAut(F_n) \to \R$ be the composition $\varphi = \eta \circ \mathrm{ab} \circ \pi$, where $\mathrm{ab} \colon F \to F^{ab}$ is the abelianization map.

  Since $\Sigma^1(F) = \emptyset$, we find that $\varphi \notin \Sigma^1(\PAut(F_n))$. By the description of $\Sigma^1(\PAut(F_n))$ in \cite{OrlandiKorner}, we may assume without loss of generality that $\varphi(c_{21}) \neq 0$ and that $\varphi$ kills all $c_{ij}$ except possibly when $i,j \in \{1,2,3\}$.   Notice that since $c_{21}$ commutes with $c_{i1}$ for all $i$, we find that $\pi(c_{21})$ and $\pi(c_{i1})$ are powers of a common element, and it can only be the case that $\pi(c_{i1})= 1$ if $i>3$.

  By hypothesis there is some $c_{ij}$, with $j \notin \{1,2,3\}$ that is not killed by $\pi$.
  We will find a contradiction by showing that $\pi(c_{ij})$ commutes with $\pi(c_{21})$, which means that they are powers of a common element in $F$, so in particular $\varphi(c_{ij}) = 0 $ if and only if $\varphi(c_{21}) = 0$.

  If $i \neq 1,2$, then $c_{ij}$ and $c_{21}$ already commute. If $i=2$, since we have a defining relation $[c_{2j}, c_{21}c_{j1}]=1$ and $\pi(c_{j1})= 1$, we find that $\pi(c_{2j})$ commutes with $\pi(c_{21})$. On the other hand, if $\pi(c_{2j}) =1$ but $\pi(c_{1j})\neq 1$, then the defining relation $[c_{21}, c_{1j}c_{2j}]=1$ determines similarly that $\pi(c_{21})$ commutes with $\pi(c_{1j})$.
  \end{proof}

\begin{proof}[Proof of Theorem~\ref{teoE}]
 Clearly it is enough to show that $G = \PAut(F_n)$ does not contain a proper normal subgroup $N$ such that i) $N$ is of type $\FP$ and ii) $G/N$ is free.

 Suppose for  a contradiction that such $N$ exists. By Feldman's theorem \cite[Thm.~5.5]{BieriBook} we have
 \[ \cd_\Q(N) <  \cd_\Q(G) = \cd_\Z(G) = n-1\]
 where the equalities hold because $G$ contains a copy of $\Z^{n-1}$. In particular $N$ does not contain any $\X_j$.

 By Lemma~\ref{epi.free}, the quotient $G/N$ must be infinite cyclic. But then the product formula for the Euler characteristics \cite[IX-7.3]{BrownBook} gives
 \[ \chi( \PAut(F_n) ) = \chi(N) \cdot \chi(G/N) = \chi(N) \cdot \chi(\Z) =0,\]
 which contradicts \eqref{euler}.
 \end{proof}

\section{Relative automorphisms of RAAGs} 
  In this section we mainly collect notation and results from Day and Wade \cite{DayWade} about relative automorphisms of RAAGs. This will be the framework for the rest of the paper, leading up to the proofs of Theorem~\ref{teoC} and the remaining part of Theorem~\ref{teoA}.
 
 Let $G$ be a group, $H\leq G$ a subgroup and $\Phi\in\Out(G)$. We say that $\Phi$ {\sl preserves} $H$ if there exists a representative $\phi\in\Phi$ such that $\phi(H)=H$ and we say that $\Phi$ {\sl acts trivially} on $H$ if there exists a representative $\phi\in\Phi$ such that $\phi$ is the identity on $H$. We use the same terminology for elements $\phi\in\Aut(G)$, saying that $H$ is preserved by $\phi$ if it is preserved by $\phi\Inn(G)\in\Out(G)$ and that $\phi$ acts trivially on $H$ if its coset in $\Out(G)$ does.

 Now, let $G=A_\Gamma$ be a right angled Artin group and let $\mathcal{G}$ and $\mathcal{H}$ be families of special subgroups of $A_\Gamma$ (i.e., subgroups generated by subgraphs of $\Gamma$). Let $\Out^0(A_\Gamma)$ be the subgroup of $\Out(A_\Gamma)$ generated by the cosets of inversions, partial conjugations and transvections  (see \cite{DayWade} for definitions). Day and Wade define $\Out^0(A_\Gamma,\mathcal{G},\mathcal{H}^t)$ as the subgroup of those $\Phi\in\Out^0(A_\Gamma)$ which preserve all the subgroups in $\mathcal{G}$ and act trivially on all the subgroups in $\mathcal{H}$.
 
 If $\Phi\in\POut(A_\Gamma)$, then for any $v\in\Gamma$ and any $\phi\in\Phi$ we have $v^\phi=v^{g_v}$ for some $g_v\in A_\Gamma$ so if $\phi_{g_v}$ is conjugation by $g_v$, $\alpha=\phi\phi_{g_v}^{-1}\in\Phi$ and $v^\alpha=v$ which implies that $\Phi$ acts trivially on the cyclic group $\langle v\rangle$ for any $v\in\Gamma$. Conversely, if $\Phi$ acts trivially on $\langle v\rangle$ for any $v\in\Gamma$, then for each $v\in\Gamma$ we may choose $\alpha_v\in\Phi$ with $v^{\alpha_v}=v$ so for any $\phi\in\Phi$ they differ by an inner automorphism, i.e., there is some $g_v\in A_\Gamma$ such that $v^\phi=(v^{\alpha_v})^{g_v}=v^{g_v}$, in other words, $\Phi\in\POut(A_\Gamma)$. This means that if we set
 \[\mathcal{C}=\{\langle v\rangle\mid v\in\Gamma\},\]
 then 
 \[\POut(A_\Gamma)=\Out^0(A_\Gamma,\mathcal{C},\mathcal{C}^t).\]
 From now on, we will use a variation of Day-Wade's notation and set
 \[\POut(A_\Gamma,\mathcal{G},\mathcal{H}^t)\coloneqq \Out^0(A_\Gamma,\mathcal{G}\cup\mathcal{C},(\mathcal{H}\cup\mathcal{C})^t).\]

 Following \cite{DayWade}, we say that the family $\mathcal{G}$ is {\sl saturated} with respect to $(\mathcal{G},\mathcal{H})$ if $\mathcal{G}$ contains every proper special subgroup which is  preserved by $\Out^0(A_\Gamma,\mathcal{G},\mathcal{H}^t)$. To avoid complications, unless otherwise stated, we will assume that this is always the case. Note that there is no loss of generality in doing that, since
 \[\Out^0(A_\Gamma,\overline{\mathcal{G}},\mathcal{H}^t)=\Out^0(A_\Gamma,\mathcal{G},\mathcal{H}^t)\]
 for $\overline{\mathcal{G}}$ the set of proper special subgroups preserved by $\Out^0(A_\Gamma,\mathcal{G},\mathcal{H}^t)$.
 
 We will also denote by $\overline{\mathcal{C}}$ the set of all proper special subgroups preserved by $\POut(A_\Gamma)$, in other words, the set of all proper special subgroups which are preserved by any partial conjugation.
  
Next, we state  \cite[Theorem E]{DayWade} with the notation established above. Let $\Delta \subseteq\Gamma$ such that $A_\Delta\in\mathcal{G}$. Then there is a short exact sequence
 \[1\to\POut(A_\Gamma,\mathcal{G},(\mathcal{H}\cup\{A_\Delta\})^t)\to\POut(A_\Gamma,\mathcal{G},\mathcal{H}^t)\to\POut(A_\Delta,\mathcal{G}_\Delta,\mathcal{H}_\Delta^t)\to 1\]
 where 
 \[\mathcal{G}_\Delta=\{A_{\Delta\cap\Theta}\mid A_\theta\in\mathcal{G}\}\smallsetminus\{A_\Delta\}\]
  and $\mathcal{H}_\Delta^t$ is defined analogously.
  
  Day and Wade iterate this sequence to obtain a subnormal series for the groups $\Out^0(A_\Gamma,\mathcal{G},\mathcal{H}^t)$ whose factors are either free abelian, or linear groups $\mathrm{GL}(m,\Z)$ or {\sl Fouxe-Rabinovitch} groups (defined below). We show in Section \ref{polyRAAG} that in the particular case when the group is $\POut(A_\Gamma)$ and the graph $\Gamma$ satisfies the previously defined condition (*), Day-Wade's procedure determines a subnormal series
  \[1=N_0\trianglelefteq N_1\trianglelefteq\ldots\trianglelefteq N_s=\POut(A_\Gamma)\]
  whose factors $N_i/N_{i-1}$ are all (finitely generated) RAAGs and moreover that each  $N_i/N_{i-1}$ acts trivially on the abelianization $N_{i-1}/N_{i-1}'$ of the next term.
  
  Later, we will refine that series to get a polyfree series. The extra condition of being polyfree will allow us to deduce that the partial short exact sequences that we get split and then use \cite{FalkRandell} to prove that there is a similar structure at the level of the associated Lie algebras.
  
  These constructions lift to a subnormal series of $\PAut(A_\Gamma)$ with similar properties by means of the exact sequence
  \[ 1 \to \Inn(A_\Gamma) \to \PAut(A_\Gamma) \to \POut(A_\Gamma) \to 1.\]
  Note that $\Inn(A_\Gamma)$ is isomorphic to a RAAG defined on the full subgraph $\Gamma_0 \subseteq \Gamma$ of the vertices that are not adjacent to every vertex of $\Gamma$.

  We will use the following result which follows from \cite[Theorem D]{DayWade}.
  
  \begin{lemma}\label{lem:generating} The group $G=\POut(A_\Gamma,\mathcal{G},\mathcal{H}^t)$ is generated by the partial conjugations that it contains.
   \end{lemma}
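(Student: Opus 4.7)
The strategy is to apply Day-Wade's Theorem D, which asserts that any group of the form $\Out^0(A_\Gamma,\mathcal{G}',\mathcal{H}'^t)$ is generated by the inversions, partial conjugations, and transvections that it actually contains. Applied with $\mathcal{G}' = \mathcal{G}\cup\mathcal{C}$ and $\mathcal{H}' = \mathcal{H}\cup\mathcal{C}$, this immediately supplies a generating set for $G = \POut(A_\Gamma,\mathcal{G},\mathcal{H}^t)$. The task then reduces to verifying that, once the cyclic family $\mathcal{C}$ is imposed on both sides, neither inversions nor transvections can belong to $G$, so the Day-Wade generating set collapses to the partial conjugations inside $G$.

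For an inversion $\iota_v \colon v\mapsto v^{-1}$, any representative of its outer class has the form $\iota_v\circ\phi_g$ for some $g\in A_\Gamma$, sending $v$ to $(v^{-1})^g$. Acting trivially on $\langle v\rangle\in\mathcal{C}$ would require $v$ to be conjugate to $v^{-1}$; comparing images in the abelianization $\Z^{V(\Gamma)}$ immediately rules this out. For a transvection $\tau_{v,w}$ with $v\mapsto vw$, any representative sends $v$ to $(vw)^g$, and preservation of $\langle v\rangle\in\mathcal{G}\cup\mathcal{C}$ would force $(vw)^g$ to lie in $\langle v\rangle$, i.e.\ $vw$ conjugate to a power of $v$. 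Once again the abelianization obstruction $v+w\not\equiv kv$ in $\Z^{V(\Gamma)}$ forbids this, since $w$ is a distinct generator.

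With inversions and transvections excluded, the generating set provided by Theorem D consists entirely of partial conjugations that happen to lie in $G$, which is what was to be shown. I do not anticipate a real obstacle: the heart of the argument is the correct invocation of \cite[Theorem~D]{DayWade}, and the two exclusions are clean abelianization checks rather than genuine computations inside $\Out(A_\Gamma)$.
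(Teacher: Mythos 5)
Your proposal is correct and takes the same route the paper intends: the paper merely asserts the lemma as a consequence of Day--Wade's Theorem~D without further argument, and you have supplied exactly the missing justification, namely that the abelianization obstruction excludes inversions (which cannot act trivially on $\langle v\rangle$) and transvections (which cannot even preserve $\langle v\rangle$) once the cyclic family $\mathcal{C}$ is imposed, leaving only partial conjugations in the Day--Wade generating set.
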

   
   The following technical lemma will also be useful.
   
   \begin{lemma}(\cite[Lemma 2.2(3)]{DayWade})\label{DayWade2.2}
   Let $\Delta\subseteq\Gamma$, $v\in\Gamma$, $K\subseteq\Gamma\smallsetminus\st_\Gamma(v)$ a union of connected components and $g$ the partial conjugation that conjugates vertices in $K$ by $v$ and fixes the rest.  Consider the following conditions
   \begin{itemize}
   \item[i)] $K\cap\Delta=\emptyset$,
   \item[ii)] $\Delta\smallsetminus\st_\Gamma(v)\subseteq K$,
   \item[iii)] $v\in\Delta$.
   \end{itemize}
   Then $g$ preserves $A_\Delta$ if and only if one of i), ii) or iii) happens and $g$ acts trivially on $A_\Delta$ if and only if one of i), ii) happens.
   \end{lemma}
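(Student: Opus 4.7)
The plan is to establish sufficiency by exhibiting explicit inner automorphisms $\phi_u$ (conjugation by $u$) so that $\phi_u\circ g$ either stabilizes $A_\Delta$ or restricts to the identity there, and to establish necessity by comparing $v$-coordinates in the abelianization $A_\Gamma\to\Z^\Gamma$. Two standard facts about RAAGs will be invoked for the necessity part: the centralizer of a vertex $w$ is the special subgroup $A_{\st_\Gamma(w)}$ (Servatius), and if a conjugate $uxu^{-1}$ of a vertex $x\in\Delta$ lies in $A_\Delta$, then $u\in A_\Delta\cdot A_{\st_\Gamma(x)}$.

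For sufficiency, the key observation is that $v^{-1}wv=w$ whenever $w\in\st_\Gamma(v)$. Under (i), every generator of $A_\Delta$ lies outside $K$, so $g$ is already the identity on $A_\Delta$. Under (ii), every $w\in\Delta$ lies in $K$ or in $\Delta\cap\st_\Gamma(v)$, and in both cases $g(w)=v^{-1}wv$; hence $g|_{A_\Delta}$ coincides with conjugation by $v^{-1}$ and $\phi_v\circ g$ is the identity on $A_\Delta$. Under (iii), $\phi_v\circ g$ sends $w\in K\cap\Delta$ to $w$ and $w\in\Delta\smallsetminus K$ to $vwv^{-1}\in A_\Delta$ (because $v\in\Delta$), so it stabilizes $A_\Delta$.

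For necessity, assume first that $g$ acts trivially on $A_\Delta$ but neither (i) nor (ii) holds, and pick $w_0\in K\cap\Delta$ and $w_1\in(\Delta\smallsetminus\st_\Gamma(v))\smallsetminus K$. A witness $u$ must satisfy $uw_1u^{-1}=w_1$ and $(uv^{-1})w_0(vu^{-1})=w_0$, hence $u\in A_{\st_\Gamma(w_1)}$ and $uv^{-1}\in A_{\st_\Gamma(w_0)}$. Since $w_0,w_1\notin\st_\Gamma(v)$, neither star contains $v$, so the $v$-coordinate of $u$ in the abelianization would be simultaneously $0$ and $1$, a contradiction. Now assume $g$ preserves $A_\Delta$ while none of (i), (ii), (iii) holds; in particular $v\notin\Delta$, and we may pick $w_0\in K\cap\Delta$ and $w_1\in(\Delta\smallsetminus\st_\Gamma(v))\smallsetminus K$. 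A witness $u$ satisfies $uw_1u^{-1}\in A_\Delta$ and $uv^{-1}w_0vu^{-1}\in A_\Delta$; by the RAAG conjugator fact above we can write $u=h_1c_1$ and $uv^{-1}=h_0c_0$ with $h_i\in A_\Delta$ and $c_i\in A_{\st_\Gamma(w_i)}$. Projecting the equation $h_1c_1=h_0c_0v$ to the $v$-coordinate of the abelianization yields $0=1$, since $v\notin\Delta$ and $v\notin\st_\Gamma(w_0)\cup\st_\Gamma(w_1)$.

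The main obstacle is the clean invocation of the conjugator fact used in the preservation case: one needs that a conjugate of a vertex $x\in\Delta$ which lies in $A_\Delta$ forces the conjugator into $A_\Delta\cdot A_{\st_\Gamma(x)}$. This is implicit in the normal form / retraction theory for RAAGs but has to be quoted carefully; once it is in hand, everything reduces to the $v$-coordinate bookkeeping on abelianizations outlined above.
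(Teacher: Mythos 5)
The paper does not prove this lemma itself; it is quoted directly from Day--Wade (\cite[Lemma 2.2(3)]{DayWade}), so there is no internal proof to compare against. Your argument is correct. The sufficiency computations are straightforward and right, and the necessity argument via the $v$-coordinate of the abelianization is clean: not (i) and not (ii) give you vertices $w_0\in K\cap\Delta$ and $w_1\in(\Delta\smallsetminus\st_\Gamma(v))\smallsetminus K$, both of which automatically satisfy $v\notin\st_\Gamma(w_i)$, and the $v$-exponent bookkeeping then yields the contradiction.

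The ``conjugator fact'' you flag as the main obstacle has a very short proof by the retraction $\pi\colon A_\Gamma\to A_\Delta$ that kills every generator outside $\Delta$. If $x\in\Delta$ is a vertex and $uxu^{-1}\in A_\Delta$, then applying $\pi$ gives $uxu^{-1}=\pi(u)x\pi(u)^{-1}$, so $\pi(u)^{-1}u$ centralizes $x$ and hence lies in $A_{\st_\Gamma(x)}$ by Servatius' centralizer theorem; thus $u=\pi(u)\cdot\bigl(\pi(u)^{-1}u\bigr)\in A_\Delta\cdot A_{\st_\Gamma(x)}$, as you wanted. Note also that your ``acts trivially'' case is really the special case of this fact where $\Delta$ is a single vertex, so the two cases can be unified if you prefer a more uniform writeup.
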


   Using the results above,  we deduce the following.
   
   \begin{lemma}\label{condinv} Let $\Delta\subseteq\Gamma$ be such that $\Delta\smallsetminus\st_\Gamma(v)$ is connected for any $v\in\Gamma\smallsetminus\Delta$. Then $A_\Delta$ is preserved by any partial conjugation, i.e. $A_\Delta\in\overline{\mathcal{C}}$.
   \end{lemma}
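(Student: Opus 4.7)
The plan is to invoke Lemma~\ref{lem:generating}, which reduces the claim to verifying that every partial conjugation $g = c_K^v$ (with $K$ a union of connected components of $\Gamma \smallsetminus \st_\Gamma(v)$) preserves $A_\Delta$, and then to check this by case analysis using Lemma~\ref{DayWade2.2}. Concretely, I need to show that at least one of the three conditions (i), (ii), (iii) in that lemma holds for each such $g$.

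First I would dispose of the trivial case $v \in \Delta$, where condition (iii) holds immediately. So assume $v \in \Gamma \smallsetminus \Delta$. Then the hypothesis applies: $\Delta \smallsetminus \st_\Gamma(v)$ is connected. If $\Delta \smallsetminus \st_\Gamma(v)$ is empty, then $\Delta \cap (\Gamma \smallsetminus \st_\Gamma(v)) = \emptyset$, and since $K \subseteq \Gamma \smallsetminus \st_\Gamma(v)$ this forces $K \cap \Delta = \emptyset$, giving condition (i).

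The main case is when $\Delta \smallsetminus \st_\Gamma(v)$ is nonempty and connected. Then, being a connected subset of $\Gamma \smallsetminus \st_\Gamma(v)$, it is contained in a single connected component $L$ of $\Gamma \smallsetminus \st_\Gamma(v)$. Since $K$ is a union of components, either $L \subseteq K$ or $L \cap K = \emptyset$. In the first alternative we get $\Delta \smallsetminus \st_\Gamma(v) \subseteq L \subseteq K$, i.e.\ condition (ii). In the second alternative, we have the chain of inclusions
\[K \cap \Delta \;\subseteq\; K \cap (\Delta \smallsetminus \st_\Gamma(v)) \;\subseteq\; K \cap L \;=\; \emptyset,\]
so condition (i) holds.

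Thus every partial conjugation preserves $A_\Delta$, and by Lemma~\ref{lem:generating} the whole group $\POut(A_\Gamma)$ preserves $A_\Delta$, which by definition means $A_\Delta \in \overline{\mathcal{C}}$. There is no real obstacle here; the only mildly delicate point is remembering that $K$, being a union of whole components of $\Gamma \smallsetminus \st_\Gamma(v)$, meets the distinguished component $L$ in an all-or-nothing fashion, which is precisely what the connectedness hypothesis on $\Delta \smallsetminus \st_\Gamma(v)$ is engineered to exploit.
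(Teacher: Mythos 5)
Your proof is correct and follows essentially the same route as the paper's: reduce to checking partial conjugations, assume without loss of generality that $v \notin \Delta$, observe that the connected set $\Delta \smallsetminus \st_\Gamma(v)$ lies in a single component of $\Gamma \smallsetminus \st_\Gamma(v)$, so that $K$ (being a union of full components) either contains it or misses $\Delta$ entirely, and conclude via Lemma~\ref{DayWade2.2}. You are a bit more explicit than the paper about the edge cases ($v \in \Delta$, empty $\Delta \smallsetminus \st_\Gamma(v)$), and the appeal to Lemma~\ref{lem:generating} is not really needed since $\overline{\mathcal{C}}$ is defined directly in terms of preservation by partial conjugations, but neither point changes the substance.
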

   \begin{proof} Let $g$ be a partial conjugation that conjugates by the vertex $v$ all the vertices in $K\subseteq\Gamma$ and assume $v\not\in\Delta$. We may assume that $K$ is a union of connected components of $\Gamma\smallsetminus\st_\Gamma(v)$. As $\Delta\smallsetminus\st_\Gamma(v)$ is connected this means that either $\Delta\smallsetminus\st_\Gamma(v)\subseteq K$ or $K\cap\Delta=\emptyset$. So it is enough to use Lemma \ref{DayWade2.2}.
   \end{proof}

 \section{A poly-RAAG series}\label{polyRAAG}
  
We will now impose condition (*) on $\Gamma$ and see that Day and Wade's construction produces a subnormal series for $\POut(A_\Gamma)$ without factors of type $\mathrm{GL}(n,\Z)$, which will prove Theorem~\ref{teoC}.

The following result is a consequence of \cite[Corollary 3.12]{DayWade} but we include a proof for completeness.

\begin{lemma}\label{subgraph}
 Let $\Theta \subseteq \Delta \subseteq \Gamma$, and let $\mathcal{G}$ and $\mathcal{H}$ be families of proper special subgroups of $A_\Gamma$. If $A_\Delta$ is preserved by $\POut(A_\Gamma,\mathcal{G}, \mathcal{H}^t)$ and $A_\Theta$ is preserved by $\POut(A_\Delta,\mathcal{G}_\Delta, \mathcal{H}_\Delta^t)$, then $A_\Theta$ is also preserved by $\POut(A_\Gamma, \mathcal{G}, \mathcal{H}^t)$.
 %The same holds true for $\PAut(A_\Gamma, \mathcal{G}, \mathcal{H}^t)$.
\end{lemma}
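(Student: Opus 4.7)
The plan is to reduce the question to a single element of $\POut(A_\Gamma,\mathcal{G},\mathcal{H}^t)$ and then use the Day--Wade restriction short exact sequence, quoted above from \cite[Theorem E]{DayWade}, as a translation device between the two levels. Since $\mathcal{G}$ is saturated and $A_\Delta$ is preserved by $\POut(A_\Gamma,\mathcal{G},\mathcal{H}^t)$, we may assume $A_\Delta \in \mathcal{G}$ (the case $\Delta = \Gamma$ being trivial). The short exact sequence then supplies a well-defined restriction homomorphism
\[\rho \colon \POut(A_\Gamma,\mathcal{G},\mathcal{H}^t) \longrightarrow \POut(A_\Delta,\mathcal{G}_\Delta,\mathcal{H}_\Delta^t).\]

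Given $\Phi \in \POut(A_\Gamma,\mathcal{G},\mathcal{H}^t)$, I would choose a representative $\phi \in \Aut(A_\Gamma)$ with $\phi(A_\Delta) = A_\Delta$, which exists because $\Phi$ preserves $A_\Delta$ by hypothesis. Then $\phi|_{A_\Delta}$ is a representative of $\rho(\Phi)$. By the hypothesis on $\POut(A_\Delta,\mathcal{G}_\Delta,\mathcal{H}_\Delta^t)$, there is a further representative $\psi \in \Aut(A_\Delta)$ of $\rho(\Phi)$ with $\psi(A_\Theta) = A_\Theta$; write $\psi = \phi|_{A_\Delta} \circ \iota_a$ for some $a \in A_\Delta$, where $\iota_a$ denotes conjugation by $a$ inside $A_\Delta$.

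Next I would form $\phi' = \phi \circ \iota_a \in \Aut(A_\Gamma)$, now interpreting $\iota_a$ as the inner automorphism of $A_\Gamma$ given by the element $a \in A_\Delta \subseteq A_\Gamma$. Because $\phi$ and $\phi'$ differ by an inner automorphism of $A_\Gamma$, they represent the same class $\Phi \in \Out(A_\Gamma)$. Moreover, since $a \in A_\Delta$, conjugation by $a$ in $A_\Gamma$ preserves $A_\Delta$ and restricts to conjugation by $a$ inside $A_\Delta$, so $\phi'(A_\Delta) = A_\Delta$ and $\phi'|_{A_\Delta} = \psi$. In particular $\phi'(A_\Theta) = \psi(A_\Theta) = A_\Theta$, which shows that $\Phi$ preserves $A_\Theta$. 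The only (mild) obstacle is keeping track of inner automorphisms at the two levels $A_\Gamma$ and $A_\Delta$ consistently; beyond that, the argument is a direct consequence of the Day--Wade restriction sequence and the definition of \emph{preserves}.
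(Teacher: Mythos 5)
Your proof is correct and follows essentially the same route as the paper: choose a representative of $\Phi$ that preserves $A_\Delta$, restrict it to $A_\Delta$ (landing in $\POut(A_\Delta,\mathcal{G}_\Delta,\mathcal{H}_\Delta^t)$ via the Day--Wade restriction map), adjust by an inner automorphism $\iota_a$ with $a\in A_\Delta$ so that $A_\Theta$ is preserved, and observe that $\iota_a$ is also inner in $A_\Gamma$, so the corrected representative still lies in $\Phi$. The paper phrases this with right-multiplication by conjugations $\rho_x,\rho_y$ rather than composition with $\iota_a$, and is a bit more terse about invoking the restriction map, but the argument is the same.
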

\begin{proof} Let $g$ be a representative for $\Phi\in\POut(A_\Gamma,\mathcal{G}, \mathcal{H}^t)$. As $A_\Delta$ is preserved by $\POut(A_\Gamma,\mathcal{G}, \mathcal{H}^t)$, there is some $x\in A_\Gamma$ such that $(w^g)^x\in A_\Delta$ for any $w\in\Delta$, so $g\rho_x\in\PAut(A_\Delta,\mathcal{G}_\Delta, \mathcal{H}_\Delta^t)$ where $\rho_x$ is conjugation by $x$. As $A_\Theta$ is preserved by $\POut(A_\Delta,\mathcal{G}_\Delta, \mathcal{H}_\Delta^t)$ there is some $y\in A_\Delta\subseteq A_\Gamma$ such that $(u^g)^{xy}\in A_\Theta$ for any $u\in\Theta$. %The assertion for $\PAut(A_\Gamma)$ is similar.
\end{proof}

 We will need the following definition from \cite[3.3.1]{DayWade}.
 Let $\mathcal{G}$ be a family of proper special subgroups of $A_\Gamma$. We say that two vertices $u,v\in\Gamma$ are {\sl $\mathcal{G}$-adjacent} if either they are  adjacent in $\Gamma$ or there is some $A_\Delta\in\mathcal{G}$ such that $u,v\in\Delta$. A {\sl $\mathcal{G}$-path} is a finite sequence of vertices such that each vertex in the sequence is $\mathcal{G}$-adjacent to the next and a full subgraph of $\Delta\subseteq\Gamma$ is {\sl $\mathcal{G}$-connected} if for any two vertices $u,v\in\Delta$ there is a $\mathcal{G}$-path from $u$ to $v$. Maximal $\mathcal{G}$-connected subgraphs of  a subgraph $\Delta$ are called {\sl $\mathcal{G}$-connected components of $\Delta$}.
 
 Recall that we are denoting by $\overline{\mathcal{C}}$ the set of special subgroups which are preserved by $\POut(A_\Gamma)$. For a subgraph $\Delta\subseteq\Gamma$, we denote
 \[\overline{\mathcal{C}}_\Delta=\{A_{\Delta\cap\Theta}\mid A_\Theta\in \overline{\mathcal{C}}, \Delta\not\subseteq\Theta\}.\]

  Condition (*) does not pass to subgraphs, not even to preserved subgraphs, so we need a version of this that behaves well throughout the inductive process.

\begin{lemma}\label{connectedcomponents*} If $\Gamma$ is a graph that satisfies (*) and $A_\Delta\subseteq A_\Gamma$ is preserved by $\POut(A_\Gamma)$, then $\Delta$ has at most 3 $\overline{\mathcal{C}}_\Delta$-connected components.
\end{lemma}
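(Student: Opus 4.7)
The plan is to argue by contradiction. Suppose $\Delta$ has four distinct $\overline{\mathcal{C}}_\Delta$-components $\Delta_1, \Delta_2, \Delta_3, \Delta_4$, and pick $v_i \in \Delta_i$. Two elementary observations hold at once. First, the $v_i$'s are pairwise non-adjacent, since an edge between $v_i$ and $v_j$ in $\Delta$ would make them $\overline{\mathcal{C}}_\Delta$-adjacent. Second, the common neighbourhood $X := \bigcap_{i=1}^{4} \lk(v_i)$ is disjoint from $\Delta$: a vertex $x \in X \cap \Delta$ would be $\Gamma$-adjacent to every $v_i$, hence lie in each $\Delta_i$, which is impossible. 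Consequently $\Delta \subseteq W := \Gamma \smallsetminus X$.

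The task then reduces to showing that the $v_i$'s lie in four distinct connected components of $W$, which directly contradicts condition (*). Suppose otherwise, say $v_1$ and $v_2$ lie in the same $W$-component $\Sigma$. I aim to exhibit a special subgroup $A_\Sigma \in \overline{\mathcal{C}}$ with $\Delta \not\subseteq \Sigma$ and $v_1, v_2 \in \Sigma$; this would force $v_1$ and $v_2$ to be $\overline{\mathcal{C}}_\Delta$-adjacent, contradicting that they lie in different $\overline{\mathcal{C}}_\Delta$-components. The natural candidate is $\Sigma$ itself, and what must be checked is (a) $A_\Sigma \in \overline{\mathcal{C}}$, and (b) $\Delta \not\subseteq \Sigma$.

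For (b): if $v_3 \notin \Sigma$ or $v_4 \notin \Sigma$ we are done, since these are elements of $\Delta$. If instead all four $v_k$ fall in $\Sigma$, we swap one of $v_3$ or $v_4$ for another vertex within its component $\Delta_k$; as $v_k$ varies in $\Delta_k$ the set $X$ changes, and the assumption that $\Delta_3$ and $\Delta_4$ are genuine $\overline{\mathcal{C}}_\Delta$-components (so not collapsed into $\Delta_1\cup\Delta_2$) ought to ensure that some replacement choice places one of the $v_k$'s in a distinct $W$-component. For (a), we use the characterization extracted from Lemma \ref{DayWade2.2}: a proper special subgroup $A_\Sigma$ is preserved by $\POut(A_\Gamma)$ if and only if, for every $u \notin \Sigma$, the set $\Sigma \smallsetminus \st(u)$ lies in a single connected component of $\Gamma \smallsetminus \st(u)$. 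The vertices $u$ outside $\Sigma$ split into two classes: those in $X$, and those in other $W$-components. The latter case is immediate from the definition of $\Sigma$ as a $W$-component. The former case — $u$ adjacent to all four $v_k$, so $u \in \Gamma \smallsetminus \Delta$ — must be handled by invoking the same characterization applied to $A_\Delta \in \overline{\mathcal{C}}$, which forces $\Delta \smallsetminus \st(u)$ into a single component of $\Gamma \smallsetminus \st(u)$.

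The main obstacle is exactly this last verification: transferring the single-component property from $\Delta$ to the (possibly larger) set $\Sigma$ when $u \in X$. The argument needs to relate paths in $\Gamma \smallsetminus \st(u)$ to paths in $W = \Gamma \smallsetminus X$, using that every vertex of $\Sigma$ is joined to $v_1$ by a $\Gamma$-path avoiding $X$, while $X \subseteq \st(u)$ need not hold, making the comparison delicate. Finishing this step cleanly, together with making the exchange argument for (b) rigorous, is where the technical work of the proof lies.
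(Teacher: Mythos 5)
Your reduction to four pairwise non-adjacent vertices and your observation that $X := \bigcap_i\lk(v_i)$ is disjoint from $\Delta$ are both correct, and the overall strategy of producing a member of $\overline{\mathcal{C}}_\Delta$ containing $v_1,v_2$ to contradict their lying in distinct $\overline{\mathcal{C}}_\Delta$-components is the right idea. However, there are two genuine gaps, and they are related: you are working with the entire $W$-component $\Sigma$, which is too large an object for either claim to go through.

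For (a), the verification that $A_\Sigma\in\overline{\mathcal{C}}$ fails at the point you flagged. For $u\in X$, you need $\Sigma\smallsetminus\st(u)$ to lie in a single component of $\Gamma\smallsetminus\st(u)$. But $\Sigma$ is a full connected component of $\Gamma\smallsetminus X$, which may sprawl far beyond $\Delta$; two vertices of $\Sigma\smallsetminus\st(u)$ might only be joined within $\Sigma$ by paths passing through $\st(u)\smallsetminus X$, so they could end up in distinct components of $\Gamma\smallsetminus\st(u)$. Knowing that $\Delta\smallsetminus\st(u)$ sits in one component gives no control over the vertices of $\Sigma$ outside $\Delta$, which is exactly where the connectivity could break. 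For (b), the ``swap one of $v_3,v_4$'' fix does not work as stated: changing the representative $v_k$ changes $X$, hence changes $W$ and $\Sigma$, so the whole configuration you built is no longer the one you are reasoning about.

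The paper avoids both problems by \emph{not} using the full component. Starting, like you, from the fact that $(*)$ forces two of the $v_i$ (say $v_1,v_2$) to be connected in $W$, it takes a path $p$ of \emph{minimal} length joining them in $W$ and sets $\Omega=\bigl(\bigcup\{\st_\Gamma(u)\mid u\in p,\ u\neq v_1,v_2\}\bigr)\cup\{v_1,v_2\}$. This $\Omega$ is deliberately thin: every vertex of $\Omega$ is adjacent to an interior vertex of $p$, and for any $w\notin\Omega$ the interior of $p$ is untouched by $\st(w)$, so $\Omega\smallsetminus\st(w)$ is automatically connected and Lemma~\ref{condinv} gives $A_\Omega\in\overline{\mathcal{C}}$ with no case analysis. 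The analogue of your (b) is then handled head-on rather than dodged: if $\Delta\subseteq\Omega$, then $v_3\in\Omega$ is adjacent to some interior $z\in p$, minimality of $p$ forces $p$ to have length $2$ with $z$ adjacent to $v_1,v_2$, and $v_4\in\Omega$ forces $z$ adjacent to $v_4$ as well, so $z\in X$, contradicting $p\subseteq W$. If you want to salvage your write-up, the essential missing idea is to replace $\Sigma$ by a minimally chosen thickened path so that the ``removing $\st(w)$ cannot disconnect it'' property holds by construction.
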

\begin{proof} Assume, for a contradiction, that there are $v_1,v_2,v_2,v_4\in\Delta$ lying in different $\overline{\mathcal{C}}_\Delta$-connected components. In particular this implies that the $v_j$ are pairwise non-adjacent. % $\lk_\Gamma(v_i)\cap\lk_\Gamma(v_j)\cap\Delta=\emptyset$.
Applying condition (*) we see that at least two of the vertices $v_1,\ldots,v_4$ are connected in  $\Gamma\smallsetminus\cap_{i=1}^4\lk(v_i)$. Let $p$ be a path of minimal length in $\Gamma\smallsetminus\cap_{i=1}^4\lk(v_i)$ joining two of the $v_i$, say $v_1$ and $v_2$. In particular neither $v_3$ nor $v_4$ are in $p$. Denote by $\Omega$ the subgraph induced by the following vertices
\[\cup\{\st_\Gamma(u)\mid u\in p,u\neq v_1,v_2\}\cup\{v_1,v_2\}.\]
Note that for every pair of vertices $u,v\in\Omega$ there is a path joining them lying entirely in $p$ (except of possibly the initial and terminal vertices  $u$ and $v$ themselves).
Let $w\in\Gamma\smallsetminus\Omega$. Then $p\cap\st_\Gamma(w)=\emptyset$ so we deduce that $\Omega\smallsetminus\st_\Gamma(w)$ is connected. Therefore Lemma \ref{condinv} implies that $A_\Omega$ is preserved by $\POut(A_\Gamma)$, i.e., $A_\Omega\in\overline{\mathcal{C}}$. Then either $A_{\Omega\cap\Delta}\in\overline{\mathcal{C}}_\Delta$ or $\Delta\subseteq\Omega$. In the first case we deduce that $v_1$ and $v_2$ are in the same $\overline{\mathcal{C}}_\Delta$-connected component so we get the result. So we assume $\Delta\subseteq\Omega$  and as $v_3,v_4\in\Delta$, they both lie in $\Omega$ (but not in $p$). Therefore we may choose $z\in p$ such that $z$ is linked to, say $v_3$ and then the minimality of $p$ implies that $z$ is also linked to $v_1$ or $v_2$. But then as there is a path from $v_3$ of length 2, we deduce that also $p$ has length 2 so $z$ is indeed linked to the four elements $v_1,v_2,v_3,v_4$ which contradicts the fact that $p$ is contained in $\Gamma\smallsetminus\cap_{i=1}^4\lk(v_i)$.
\end{proof}
 
Consider a group $S$ which decomposes as a free product
\[S=G_1\star\cdots\star G_m\]
 The {\sl Outer Fouxe-Rabinovitch} group of $S$ is the subgroup of $\Out(S)$ consisting on elements having, for each $i$, a representative acting as conjugation on $G_i$. In particular, if $S=A_\Gamma$ and $\Gamma$ is the disjoint union of $\Delta_i$, $i=1,\ldots,m$, then the Outer Fouxe-Rabinovitch group of $A_\Gamma$ is
 \[\mathrm{OFR}(A_\Gamma)=\POut(A_\Gamma, \mathcal{H}, \mathcal{H}^t).\]
 for $\mathcal{H} = \{A_{\Delta_1},\ldots,A_{\Delta_m}\}$.

Coming back to the general case $S=G_1\star\cdots\star G_m$, if we set
  \[\mathrm{FR}(S)=\{\phi\in\Aut(S)\mid \phi|_{G_i} \text{ is conjugation with some } x_i \in S, \text{ for }  i = 1, \ldots, m\},\]
 then $\mathrm{OFR}(S)=\mathrm{FR}(S)/\Inn(S)$. We will call $\mathrm{FR}(S)$ the {\sl Fouxe-Rabinovitch group} associated to the decomposition $S=G_1\star\cdots\star G_m$.
 
  \begin{remark} In \cite{DayWade} there is a more general definition of Fouxe-Rabinovitch used for groups with a decomposition   $G_1\star\cdots\star G_m\star\mathrm{Free}(k)$ but we will just need the restricted version above.
  \end{remark}
  
 Using the obvious projection $S=G_1\star\cdots\star G_m\to G_1\times\cdots\times G_m$ one sees that there is a homomorphism $\mathrm{FR}(S)\to\Aut(G_1\times\cdots\times G_m)$. Let $N$ be its kernel. 
  Consider also the subgroup
   \[H=\{\varphi\in\mathrm{FR}(S)\mid G_i^\varphi=G_i, i=1,\ldots,m\},\]
  observe that $N\cap H=1$ so we may see $H$ as a subgroup of $\Aut(G_1\times\cdots\times G_m)$, indeed
  $H\cong\Inn(G_1)\times\cdots\times\Inn(G_m).$
  
   A {\sl Whitehead automorphism} of $S=G_1\star\cdots\star G_m$ is a partial conjugation of the following form: fix  $j$ with $1\leq j\leq m$ and let $X_j=\{x_1,\ldots,x_m\}\subseteq G_j$. Then for each $g\in G_i$, $\varphi_{X_j}$ maps $g$ onto $g^{x_i}$. Note that in the case when $x_j=1$, then $\varphi_{X_j}\in N$, and in the case when all $x_i=1$ for $i\neq j$, $\varphi_{X_j}\in H$. The following Lemma follows from the previous observations together with \cite[Lemma 2.1]{McCulloughMiller}.
   
    \begin{lemma}\label{FRdecomposition} If $S=G_1\star\cdots\star G_m$, any element in the associated Fouxe-Rabinovitch group $\mathrm{FR}(S)$ can be written as $\varphi\psi$ where $\varphi$ is a product of Whitehead automorphisms and $\psi\in H$.
\end{lemma}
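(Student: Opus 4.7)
The plan is to combine the structural facts already recorded---namely $N \cap H = 1$ and $H \cong \Inn(G_1)\times\cdots\times\Inn(G_m)$---with \cite[Lemma 2.1]{McCulloughMiller} to split any $\alpha \in \mathrm{FR}(S)$ into a Whitehead part and an $H$-part.

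First, I would verify that the natural projection $S \to G_1 \times \cdots \times G_m$ induces a homomorphism
\[\pi \colon \mathrm{FR}(S) \longrightarrow \Aut(G_1 \times \cdots \times G_m)\]
whose image lies in $\Inn(G_1)\times\cdots\times\Inn(G_m)$. Indeed, if $\alpha|_{G_i}=\mathrm{conj}_{x_i}|_{G_i}$ and $\bar{x}_i=(h_1,\ldots,h_m)$ denotes the image of $x_i$ in $G_1\times\cdots\times G_m$, then $\pi(\alpha)$ acts on the $i$-th factor as conjugation by $h_i \in G_i$. Combining this with the identification $H \cong \Inn(G_1)\times\cdots\times\Inn(G_m)$ noted before the statement, one sees that for every $\alpha \in \mathrm{FR}(S)$ there is some $\psi \in H$ with $\pi(\alpha)=\pi(\psi)$; such a $\psi$ is unique because $N \cap H=1$.

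Given $\alpha \in \mathrm{FR}(S)$, choose this $\psi \in H$ and set $\varphi := \alpha\psi^{-1}$. By construction $\varphi \in \ker\pi = N$. The role of \cite[Lemma 2.1]{McCulloughMiller} in the argument is then to guarantee that $N$ is generated by the Whitehead automorphisms it contains---equivalently, by the $\varphi_{X_j}$ with trivial $j$-th entry $x_j=1$, which were already observed to lie in $N$. Applying this generation statement to $\varphi$ expresses it as a product of Whitehead automorphisms, whence $\alpha = \varphi\psi$ as required.

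The step I expect to require the most care is matching \cite[Lemma 2.1]{McCulloughMiller}, whose natural habitat is the automorphism group of a free product that may include a free factor of positive rank, to the restricted situation here (free rank zero); once this dictionary is in place, the decomposition $\mathrm{FR}(S)=N\cdot H$ with the desired form of $N$-elements follows immediately, and no nontrivial commutation argument is needed.
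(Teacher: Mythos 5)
Your proof is correct and self-contained, but it is more elaborate than what the paper appears to intend, and the precise invocation of the cited lemma deserves scrutiny. The decomposition $\mathrm{FR}(S) = N\cdot H$ that you establish is sound: the computation that $\pi(\alpha)$ acts on the $i$-th direct factor by conjugation by the $G_i$-component of $x_i$ shows $\pi(\mathrm{FR}(S)) \subseteq \Inn(G_1)\times\cdots\times\Inn(G_m) = \pi(H)$, and $N\cap H = 1$ gives the unique $\psi$ with $\pi(\alpha)=\pi(\psi)$, hence $\varphi:=\alpha\psi^{-1}\in N$. However, what \cite[Lemma~2.1]{McCulloughMiller} supplies is (in essence) that $\mathrm{FR}(S)$ itself is generated by Whitehead automorphisms, not that the kernel $N$ is generated by the Whitehead automorphisms it contains. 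Your proof survives anyway---since $\varphi\in N\subseteq\mathrm{FR}(S)$, the generation statement for $\mathrm{FR}(S)$ already lets you write $\varphi$ as a product of Whitehead automorphisms---but the intermediate claim ``$N$ is generated by the $\varphi_{X_j}$ with $x_j=1$'' is not a direct restatement of the cited lemma; it would require the further observation that $H$ normalizes the set of those $N$-type Whitehead automorphisms, whence $N = \langle \varphi_{X_j}\mid x_j=1\rangle$ follows from $\mathrm{FR}(S) = \langle \varphi_{X_j}\mid x_j=1\rangle\cdot H$ and $N\cap H=1$. Note also that once one accepts that $\mathrm{FR}(S)$ is generated by Whitehead automorphisms, the lemma as stated holds trivially with $\psi=1$; the paper's one-line citation (``the previous observations together with \cite[Lemma~2.1]{McCulloughMiller}'') reads most naturally as this short deduction. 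Your route buys a genuinely stronger normal form (namely $\varphi\in N$), which is useful for later arguments but more than the statement demands.
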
 

  This has the following consequence:

 \begin{lemma}\label{FouRab}(\cite[Section 2.3]{McCulloughMiller}) The outer Fouxe-Rabinovitch group of a free product of 2  components is 
\[\mathrm{OFR}(G_1\star G_2)=\Inn(G_1)\times\Inn(G_2)\]
 and for 3  components we have
\[ \mathrm{OFR}(G_1\star G_2\star G_3)=Z\rtimes H\]
with $Z=G_1\star G_2\star G_3$ and $H=\Inn(G_1)\times\Inn(G_2)\times\Inn(G_3)$. Moreover, $H$ acts trivially on the abelianization $Z/Z'$.
  \end{lemma}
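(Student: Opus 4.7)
The proof treats the cases $m=2$ and $m=3$ separately but begins with a common observation. The inclusion $H\hookrightarrow \mathrm{FR}(S)$ followed by the quotient $\mathrm{FR}(S)\twoheadrightarrow\mathrm{OFR}(S)$ gives a map $\iota\colon H\to\mathrm{OFR}(S)$, and I would first verify $\iota$ is injective in both cases: if $\mathrm{conj}_s\in\Inn(S)$ also lies in $H$, then $s$ normalizes each $G_i$, and in a free product of nontrivial groups $N_S(G_i)=G_i$, so $s\in\bigcap_i G_i=\{1\}$.

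For $m=2$, I would then show $\iota$ is surjective. By Lemma~\ref{FRdecomposition}, every element of $\mathrm{FR}(G_1\star G_2)$ factors as $\varphi\psi$ with $\varphi$ a product of Whitehead automorphisms and $\psi\in H$, so it suffices to show each Whitehead $\varphi_{X_j}$ with $X_j=(x_1,x_2)\subseteq G_j$ lies in $H\cdot\Inn(S)$. A direct computation confirms this: composing $\varphi_{X_j}$ with the inner automorphism by $x_k$ for $k\neq j$ produces an automorphism that fixes $G_k$ pointwise and restricts to $G_j$ as conjugation by an element of $G_j$, hence lies in $H$.

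For $m=3$, I would define $Z\leq\mathrm{OFR}(S)$ as the subgroup generated by three chosen partial conjugations $\psi_{i_j,j}(x)$ (one per $j\in\{1,2,3\}$, with a fixed $i_j\neq j$), where $\psi_{i,j}(x)$ conjugates $G_i$ by $x\in G_j$ and fixes the other factors pointwise. The analogous computation to the $m=2$ case now yields only reduced Whitehead automorphisms $\omega_j(a,b)$ (fixing $G_j$ and conjugating the other two factors by $a,b\in G_j$); these are in $H\cdot\Inn(S)$ only when $a=b$, with the combination $\omega_j(a,a)$ reducing to conjugation by $a^{-1}$ on $G_j$. Using this identity, every Whitehead modulo $H\cdot\Inn(S)$ can be rewritten as a product of the chosen generators, giving $\mathrm{OFR}(S)=ZH$. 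The identifications $Z\cap H=1$ and $Z\cong G_1\star G_2\star G_3$ form the heart of \cite[Section 2.3]{McCulloughMiller}: one must verify that distinct reduced words in the $\psi_{i_j,j}$'s remain distinct and outside $H$ in $\mathrm{OFR}(S)$.

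For the ``moreover'' statement, I would compute the conjugation action of $H$ on the generators of $Z$: if $g_\ell\in G_\ell$ is viewed in $H$, then $g_\ell\psi_{i,j}(x)g_\ell^{-1}=\psi_{i,j}(g_\ell xg_\ell^{-1})$ if $\ell=j$, and $=\psi_{i,j}(x)$ otherwise. In particular $Z$ is $H$-stable, hence normal in $\mathrm{OFR}(S)$, and in the abelianization $Z/Z'\cong G_1^{\mathrm{ab}}\oplus G_2^{\mathrm{ab}}\oplus G_3^{\mathrm{ab}}$ the element $g_j x g_j^{-1}$ is equivalent to $x$, so the induced $H$-action is trivial. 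The main obstacle in this plan is the identification $Z\cong G_1\star G_2\star G_3$, which requires ruling out all spurious relations among the chosen $\psi_{i_j,j}$'s---this is precisely McCullough--Miller's combinatorial analysis.
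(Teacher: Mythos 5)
Your $m=2$ argument matches the paper's: both use Lemma~\ref{FRdecomposition}, show $H\cap\Inn(S)=1$, and reduce a Whitehead automorphism modulo an inner one into $H$. The ``moreover'' part is also the same, computed by the same explicit conjugation identities.

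The route you take for $m=3$ is genuinely different from the paper's. You try to exhibit $Z$ directly as the subgroup generated by one chosen partial conjugation $\psi_{i_j,j}$ per factor, prove $\mathrm{OFR}(S)=ZH$ by reducing Whiteheads as in the $m=2$ case, and then argue $Z\cap H=1$ and $Z\cong G_1\star G_2\star G_3$ by ruling out spurious relations --- a step you explicitly acknowledge you cannot complete without reproducing McCullough--Miller's combinatorial analysis. The paper instead invokes the McCullough--Miller simplicial complex, observes that for three factors it is a \emph{tree}, and reads off the graph-of-groups decomposition with central vertex group $H$, leaves $G_i\rtimes H$, and all edge groups $H$. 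Bass--Serre theory then gives
\[(G_1\rtimes H)\ast_H(G_2\rtimes H)\ast_H(G_3\rtimes H)\cong (G_1\star G_2\star G_3)\rtimes H\]
at once, so the free product structure of $Z$ and the fact that it is a complement to $H$ come out simultaneously, with no relation-chasing. In short: your plan identifies exactly what must be proved, but the hardest step (no spurious relations in $Z$) is precisely what the paper's tree argument supplies essentially for free; if you want to fill the gap without invoking the tree, you would indeed have to redo the combinatorial core of \cite[Section 2.3]{McCulloughMiller}.
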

 \begin{proof} This is in \cite[Section 2.3]{McCulloughMiller} but we recall here part of the argument. Let $m$ be 2 or 3.  Let $K=\Inn(S)$ so that $\mathrm{OFR}(S)=\mathrm{FR}(S)/K$ and observe that $H\cap K=1$.

 Consider first the case when $m=2$, i.e. $S=G_1\star G_2$. Then for any Whitehead automorphism  $\varphi_{X_1}$ with $X_1=\{x_1,x_2\}\subseteq G_1$ we have
 $\varphi_{X_1}=\rho\varphi_{Y_1}$
 where $\rho\in K$ is conjugation by $x_2$ and $Y_1=\{x_2^{-1}x_1,1\}$. Then $\varphi_{Y_1}\in H$ thus $\varphi_{X_1}\in KH$. By symmetry we deduce the same for Whitehead automorphisms $\varphi_{X_2}$ with $X_2\subseteq G_2$. So we deduce $\mathrm{FR}(G_1\star G_2)=HK$ thus
 \[\mathrm{OFR}(G_1\star G_2)=HK/K\cong H\cong\Inn(G_1)\times\Inn(G_2).\]
  
  In \cite[Section 2]{McCulloughMiller} there is a construction of a simplicial complex on which the group  $\mathrm{OFR}(S)$ acts. In the case when $m=3$ this complex is a tree that yields the following graph of groups.
 \[\begin{tikzpicture}
 %\begin{tikzpicture}[main/.style = {draw},node distance={15mm},scale=0.6, every node/.style={scale=0.5}]
	\node[label=right:{$H$}] (1) {$\bullet$};
\node[label=right:{$G_1\rtimes H$}] (2) [above right=1cm  of 1] {$\bullet$}; 
\node[label=left:{$G_2\rtimes H$}] (3) [above left=1cm  of 1] {$\bullet$}; 
\node[label=right:{$G_3\rtimes H$}] (4) [below=0.8cm  of 1] {$\bullet$}; 
%	\node[main] (4) [below=0.8cm  of 1] {}; 
	\draw[-] (1) --  (2);
	\draw[-] (1) -- (3);
	\draw[-] (1) -- (4);
	\end{tikzpicture}\]
 For $i=1$ the vertex group labelled $G_1\rtimes H$ is the homomorphic image in $\mathrm{OFR}(S)$ of the group generated by $K$, $H$ and Whitehead automorphisms of the form $\varphi_{\{(1,x,1)\}}$, $x\in G_1$. It is easy to check that this group is indeed isomorphic to the semidirect product $G_1\rtimes H$. The vertex groups $G_2\rtimes H$ and $G_3\rtimes H$ are defined analogously. Using this graph we get the description of $\mathrm{OFR}(S)$ in the statement.
 The action of $H$ on $Z$ is by conjugation on the corresponding component, for example, if  $a,x\in G_1$:
 \[\varphi^{-1}_{\{(a,1,1)\}}\varphi_{\{(1,x,1)\}}\varphi_{\{(a,1,1)\}}=\varphi_{\{(1,x^a,1)\}}\]
 and if $b\in G_2$, $c\in G_3$ we have
  \[\varphi^{-1}_{\{(1,b,1)\}}\varphi_{\{(1,x,1)\}}\varphi_{\{(1,b,1)\}}=\varphi_{\{(1,x,1)\}},\]
 \[\varphi^{-1}_{\{(1,1,c)\}}\varphi_{\{(1,x,1)\}}\varphi_{\{(1,1,c)\}}=\varphi_{\{(1,x,1)\}}.\]
  \end{proof}

  \begin{proof}[Proof of Theorem~\ref{teoC}]
 We follow the proof of Theorem 5.9 in \cite{DayWade} and argue by induction on the pairs $(n,m)$ ordered lexicographically where $n=|\Gamma|$ and $m=2^n-r$ for $r$ the number of special subgroups of $A_\Gamma$ on which $\POut(A_\Gamma,\mathcal{G},\mathcal{H}^t)$ acts trivially. As Day-Wade do, we call this pair $(n,m)$ the {\sl complexity} of $\POut(A_\Gamma,\mathcal{G},\mathcal{H}^t)$.
 Then we may distinguish two possible cases:
 
 \medskip

 \noindent{\bf Case 1:} There is some $A_\Delta\in\mathcal{G}$ such that $\POut(A_\Gamma,\mathcal{G},\mathcal{H}^t)$ does not act trivially on $A_\Delta$. Then we have the short exact sequence
 \[1\to\POut(A_\Gamma,\mathcal{G},(\mathcal{H}\cup\{A_\Delta\})^t)\to\POut(A_\Gamma,\mathcal{G},\mathcal{H}^t)\to\POut(A_\Delta,\mathcal{G}_\Delta,\mathcal{H}_\Delta^t)\to 1.\]
  Both $\POut(A_\Gamma,\mathcal{G},(\mathcal{H}\cup\{A_\Delta\})^t)$ and $\POut(A_\Delta,\mathcal{G}_\Delta,\mathcal{H}_\Delta^t)$ have strictly smaller complexity than $\POut(A_\Gamma,\mathcal{G},\mathcal{H}^t)$ so by induction we get series for both which combined give a series for $\POut(A_\Gamma,\mathcal{G},\mathcal{H}^t)$.
 
 \medskip
 \noindent{\bf Case 2:} There is no such a $\Delta$ which means that
 \[G=\POut(A_\Gamma,\mathcal{G},\mathcal{H}^t)=\POut(A_\Gamma,\mathcal{G},\mathcal{G}^t).\]
 Recall that we are assuming that $\mathcal{G}$ is saturated. There are five subcases to consider, each described in \cite[Section 5]{DayWade}. In the sequence we follow their considerations (and subsection numbering), adapting to our case of pure symmetric automorphisms and imposing condition (*).

\medskip
 \noindent{\bf Case 5.1.1:} $\Gamma$ is $\mathcal{G}$-disconnected. Here $G$ is a Fouxe-Rabinovitch group. More precisely,
if we denote by $\Delta_1,\ldots,\Delta_t$ the $\mathcal{G}$-components of $\Gamma$ then $\Gamma$ is the disjoint union of $\Delta_1,\ldots,\Delta_t$ and therefore
\[A_\Gamma=A_{\Delta_1}\star\cdots\star A_{\Delta_t}.\]
Note that in the decomposition in \cite[Subsection~5.1.1]{DayWade} there is also a free factor consisting of vertices that do not lie in any group in $\mathcal{G}$, but in our case $\mathcal{G}$ contains the cyclic subgroups generated by any vertex, so that is never the case. In the case when the ambient graph has (*), we can not assume that our $\Gamma$ has also (*) because we are arguing by induction and (*) does not pass to subgraphs, but as by Lemma \ref{subgraph} all the subgraphs that we are considering are $\POut$-invariant, Lemma \ref{connectedcomponents*} implies that we can assume that $\Gamma$ has at most three $\mathcal{G}$-connected components, i.e. $2 \leq t\leq 3$. Lemma \ref{FouRab} implies that
\begin{itemize}
\item either $t=2$ and $G$ is the product of the inner automorphisms groups of $A_{\Delta_1}$ and $A_{\Delta_2}$ which are both RAAGs,

\item or $t=3$ and $G$ is a semidirect product of two RAAGs $Z\rtimes H$.
\end{itemize}

\medskip
 \noindent{\bf Case 5.1.2:} $\Gamma$ disconnected and $\mathcal{G}$-connected. In this case $G$ is free abelian, so we are done.

\medskip
 \noindent{\bf Case 5.1.3:} $\Gamma$ connected and $Z(A_\Gamma)=1$. Again $G$ is free abelian.

\medskip
 \noindent{\bf Case 5.1.4:} $\Gamma$ connected and $1\neq Z(A_\Gamma)\subsetneq\Gamma$. In this case, let $Z\subseteq\Gamma$ be such that $A_Z=Z(A_\Gamma)$ and $\Delta=\Gamma\smallsetminus Z$, \cite[Proposition 5.6]{DayWade} implies that there is a surjection
% \[G\to\POut(A_\Delta,\mathcal{G}^t_\Delta)\]
\[G\to\POut(A_\Delta,\mathcal{G}_\Delta\mathcal{G}^t_\Delta)\]
 with free abelian kernel. Moreover, according to the proof of that result, the kernel is generated by transvections which means that in our case the kernel is trivial so the surjection is an isomorphism.  As the complexity of $\POut(A_\Delta,\mathcal{G}^t_\Delta)$ is strictly smaller than that of $G$, we can argue by induction.

\medskip
 \noindent{\bf Case 5.1.5:} $\Gamma$ is a complete graph. In this case $A_\Gamma$ is free abelian, so that the associated automorphism group has no partial conjugations, i.e. $G=1$.
 \end{proof}
  
\section{A presentation for the groups in the Day-Wade series}\label{sec:presentation}
The objective of this section is to obtain a presentation for the factor groups in the Day-Wade series, both for the $\PAut$ and $\POut$ versions. This presentation is similar to the standard presentation for $\PAut(A_\Gamma)$ computed by Toinet \cite{Toinet}.

To describe these presentations, we extend previously used notation  to include partial conjugations of the form $c_A^v\in\PAut(A_\Gamma)$ where $A$ is a union of connected components of $\Gamma\smallsetminus\st_\Gamma(v)$. In this case we say that $c_A^v$ is a partial conjugation by $v$ and based at $A$.
Note that if $A$ and $B$ are disjoint, then $c_A^vc_B^v=c_{A\cup B}^v$.

The proof of following lemma is straightforward.

\begin{lemma}\label{lem:relators} Let $v,w\in\Gamma$ be not linked and let $c^v_A$, $c^w_B$ be partial conjugations.  Then
$[c_A^v,c_B^w]=1$ if and only if one of the following condition holds
\begin{itemize}
\item[i)] $A\cap B=\emptyset$, $v\not\in B$ and $w\not\in A$,
\item[ii)] $A\subseteq B$ and $v\in B$,
\item[iii)] $B\subseteq A$ and $w\in A$.
\end{itemize}
\end{lemma}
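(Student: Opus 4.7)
The plan is a direct computation comparing the actions of $\phi := c_A^v$ and $\psi := c_B^w$ on each vertex of $\Gamma$, followed by a case collation. Since $[\phi,\psi]=1$ iff $\phi\psi(u) = \psi\phi(u)$ in $A_\Gamma$ for every vertex $u$, the first step is to partition $\Gamma$ into the four regions $A\cap B$, $A\smallsetminus B$, $B\smallsetminus A$, and $\Gamma\smallsetminus(A\cup B)$, and in each region to evaluate both compositions, splitting further on whether $v\in B$ and whether $w\in A$. On the complement of $A\cup B$ both actions fix $u$. On $A\smallsetminus B$, the actions agree automatically when $v\notin B$ and yield $v^{-1}uv$ versus $(w^{-1}vw)^{-1}u(w^{-1}vw)$ when $v\in B$. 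The region $B\smallsetminus A$ is symmetric. On $A\cap B$, the two actions agree when exactly one of $v\in B$, $w\in A$ holds, and otherwise yield $w^{-1}v^{-1}uvw$ versus $v^{-1}w^{-1}uwv$.

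The second step is to recognize that in every ``disagreeing'' case the required equality rearranges to the assertion that $[v,w]$ (or a conjugate of it) centralizes $u$. The key RAAG input is that, when $v$ and $w$ are non-adjacent, a vertex $u$ centralizes $[v,w]$ in $A_\Gamma$ iff $u\in\st(v)\cap\st(w)$: the centralizer of the generator $u$ is the parabolic subgroup $\langle \st(u)\rangle$, and forcing $[v,w]$ to lie in it forces both $v$ and $w$ into $\st(u)$. Because $A\subseteq\Gamma\smallsetminus\st(v)$ and $B\subseteq\Gamma\smallsetminus\st(w)$, no vertex of $A\cup B$ can centralize $[v,w]$, so whenever the ``disagreeing'' case arises at a vertex $u\in A\cup B$, it genuinely produces unequal elements of $A_\Gamma$.

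The final step is to collate the local constraints. One obtains: if $A\cap B\neq\emptyset$, commutation requires exactly one of $v\in B$, $w\in A$; if $A\smallsetminus B\neq\emptyset$, it requires $v\notin B$; and if $B\smallsetminus A\neq\emptyset$, it requires $w\notin A$. A brief case analysis on which of these three regions is non-empty, using that $v\notin A$ and $w\notin B$ automatically, then shows that the combined constraint is equivalent to exactly one of (i), (ii), (iii). The main obstacle will be bookkeeping of the several sub-cases rather than any deep algebraic difficulty; the one non-trivial ingredient is the description of centralizers of commutators of non-adjacent generators in a RAAG.
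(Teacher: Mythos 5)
Your proposal is a correct direct verification. The paper only remarks that the proof is ``straightforward'' and gives none; your element-by-element computation over the four regions of $\Gamma$ determined by $A$ and $B$, together with the centralizer fact for generators of RAAGs to rule out the disagreeing cases, is precisely the sort of argument that remark presupposes.
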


Another extension of notation is needed. Assume that we are given, for each $v \in \Gamma$, a partition $\Omega_G^v$ of $\Gamma \smallsetminus \st_\Gamma(v)$ into  \emph{unions} of connected components. For each pair of non-adjacent vertices $v,w\in\Gamma$, the subsets $D_w^v\in\Omega_G^v$ and $D_v^w\in\Omega_G^w$ with $v\in D_v^w$ and $w\in D_w^v$ will be called {\sl dominant}. Similarly, the elements $A\in\Omega_G^v$ such that $A\subseteq D_v^w$, and $B \in \Omega_G^w$ such that $B \subseteq D_w^v$ will be called {\sl subordinate}, and if $A$ lies in both $\Omega_v^G$ and $\Omega_w^G$, then we will say that $A$ is {\sl shared}. Again, these notions are all relative to the pair $(v,w)$, and here also depend on the partitions $\{\Omega_G^v\}_v$. Note that at this point we are not assuming that arbitrary elements  of $\Omega_G^w$, $\Omega_G^v$ can be only dominant, shared or subordinate, but this will play a role in Definition~\ref{def:PAut}  below.

In view of Lemma~\ref{lem:relators} and  this terminology, for $v,w \in \Gamma$ we will denote by $R^{v,w}_G$ the set of relators given by:
\begin{itemize}
\item $[c^v_A,c^w_B]=1$ if
\begin{enumerate}
 \item[(i)] either $v \in \st(w)$, with $A \in \Omega_G^v$ and $B \in \Omega_G^w$, or
 \item[(ii)] at least one of $A$, $B$ is subordinate, or $A\neq B$ are both shared, with $A \in \Omega_G^v$ and $B \in \Omega_G^w$.
\end{enumerate}
\item $[c_A^v,c_A^w c_{D_v^w}^w]=1$ and $[c_A^v c_{D_w^v}^v,c_A^w]=1$  if $v \notin \st(w)$,  $A \in \Omega_G^v \cap \Omega_G^v$ (that is, $A$ is shared), and $D_v^w \in \Omega_G^w$ and $D_w^v \in \Omega_G^v$ are dominant.
\end{itemize}

We will use these relators to define the concept of {\sl $\PAut$-like groups}. These are certain subgroups $G \leq \PAut(\Gamma)$ which admit presentations where generators are some partial conjugations in the extended sense and all relations come from Lemma~\ref{lem:relators}, in the same way as $\PAut(A_\Gamma)$ itself.

 \begin{definition}\label{def:PAut}
Let $G\leq\PAut(A_\Gamma)$ be a subgroup. Suppose that for each $v\in\Gamma$ there is a partition
$\Omega_G^v$ of $\Gamma\smallsetminus\st_\Gamma(v)$ into unions of connected components such that
\begin{itemize}
\item[i)] the partial conjugations $c_A^v$, with $A\in\Omega_G^v$ and $v\in\Gamma$, generate $G$,

\item[ii)] for each pair of not linked vertices $v,w\in\Gamma$, the non-dominant elements of $\Omega_G^v$ and $\Omega_G^w$ are either subordinate or shared relative to $\{\Omega_G^v\}_v$.
\end{itemize}
% In these conditions we will say that this is a \emph{$\PAut$-generating system}.
We say that $G$ is \emph{$\PAut$-like} if the sets $R_G^{v,w}$ of relators yield a presentation, i.e., if
\[G=\langle \Omega^v_G,v\in\Gamma\mid R^{v,w}_G, v,w\in\Gamma\rangle.\]
\end{definition}

In the situation above, we will call the partial conjugations $c_A^v$, with $A \in \Omega_G^v$, the {\sl standard generators} for $G$ and the elements of $R_G^{v,w}$ the {\sl standard relators}.

\begin{remark} Note that by definition any $\PAut$-like group contains $\Inn(A_\Gamma)$.
We will say that a subgroup $\overline{G}\leq\POut(A_\Gamma)$ is {\sl $\POut$-like} if there is a $\PAut$-like group $G\leq\PAut(A_\Gamma)$ such that $\overline{G}=G/\Inn(A_\Gamma)$. If $\Inn(A_\Gamma)\leq G\leq\PAut(A_\Gamma)$, then $G$ is $\PAut$-like if and only if $\overline{G} = G/\Inn(A_\Gamma)$ is $\POut$-like.
\end{remark}

In the rest of this section, we show that the groups associated to a Day-Wade series are all $\PAut$- and $\POut$-like. More precisely, write $G = \PAut(A_\Gamma, \mathcal{G}, \mathcal{H}^t)$ and $\overline{G}=\POut(A_\Gamma,\mathcal{G},\mathcal{H}^t)$,
and assume that $\overline{G}$ is one of the groups appearing in the series for $\POut(A_{\Gamma_1})$, where $\Gamma_1$ is a finite graph.  Here $\Gamma$ is a subgraph of $\Gamma_1$. We will check that if $G$ is $\PAut$-like, then for $A_\Delta \in \mathcal{G}$, the groups
\[H=\PAut(A_\Gamma,\mathcal{G},(\mathcal{H}\cup\{A_\Delta\})^t)\]
and
\[P=\PAut(A_\Delta,\mathcal{G}_\Delta,\mathcal{H}_\Delta^t)\]
are $\PAut$-like subgroups with respect to $\Gamma$ and $\Delta$, respectively.
 From this it will follow that the $\POut$ version \[Q = \POut(A_\Delta,\mathcal{G}_\Delta,\mathcal{H}_\Delta^t)\] is $\POut$-like as well. As we start with $\PAut(A_{\Gamma_1})$, which is trivially $\PAut$-like, the claim for all factors of the subnormal series will follow by induction.

 Note that the groups above fit into a short exact sequence
 \[1\to H\to G\to Q\to 1.\]
Observe that if $\Delta$ was complete, this would be trivial, as then $Q=1$. In fact, we will assume that there is no vertex $v \in \Delta$ such that $\Delta \subseteq \st_\Gamma(v)$, i.e., linked to all the rest of vertices of $\Delta$. We can assume this because otherwise removing $v$ from $\Delta$ yields another invariant subgraph with the same short exact sequence.

The argument will be as follows. We will define abstract groups $\hat H$ and $\hat P$ having the expected $\PAut$-like presentations and show that one can form a semidirect product
\[\hat G=\hat P\ltimes\hat H\]
such that there is a normal subgroup $Z\triangleleft\hat G$ with $Z\cap\hat H=1$ and an isomorphism $f\colon\hat G/Z\to G$ inducing isomorphisms $\hat H\to H$ and $\hat G/Z\hat H\to Q$.

To define these abstract groups, a key observation is the result by Day-Wade that the groups in their series are generated by the partial conjugations that they contain (Lemma~\ref{lem:generating})

To fix notation, let $\Omega_G^v$ for $v\in\Gamma$ be the family of sets associated to the $\PAut$-like structure of $G$. If $v\in\Delta$, let
\[\Omega_P^v=\{A\in\Omega_G^v\mid A\cap\Delta\neq\emptyset\}\]
and
\[\Omega_H^v=\{L\in\Omega_G^v\mid L\cap\Delta=\emptyset\}\cup\{S^v\}\]
where $S^v=\cup\{A\in\Omega_G^v\mid A\cap\Delta\neq\emptyset\}$. And for $w\in\Gamma\setminus\Delta$, we set $\Omega_H^w=\Omega_G^w$.

Note that the assumption that no vertex $v\in\Delta$ is linked to all the other vertices of $\Delta$ implies $S^v\neq\emptyset$ for any $v\in\Delta$.

Observe that each of the sets $\Omega^v_P$ for $v\in\Delta$ lies inside $\Omega^v_G$ thus the whole family satisfies condition ii) in the definition of $\PAut$-like \ref{def:PAut} above. We are going to see that the same happens for the $\Omega_H^w$'s. We will need the following lemma.

\begin{lemma}\label{lem:condH} With the previous notation, suppose that $A_\Delta\in\mathcal{H}$. If $w\in\Gamma\setminus\Delta$, then there is at most one $S^w\in\Omega_G^w$ that intersects $\Delta$.
\end{lemma}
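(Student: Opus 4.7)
The plan is to convert the hypothesis $A_\Delta \in \mathcal{H}$ into a combinatorial restriction on each piece of the partition $\Omega_G^w$ via Lemma~\ref{DayWade2.2}, and then use that $\Omega_G^w$ is a partition to rule out two such pieces meeting $\Delta$.

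First I would fix $A \in \Omega_G^w$ and note that $c_A^w$, being a standard generator of the $\PAut$-like group $G = \PAut(A_\Gamma,\mathcal{G},\mathcal{H}^t)$, lies in $G$. Since $A_\Delta \in \mathcal{H}$, by definition the outer class of $c_A^w$ acts trivially on $A_\Delta$. I can therefore apply Lemma~\ref{DayWade2.2} with $v = w$ and $K = A$. Because $w \in \Gamma \smallsetminus \Delta$, condition (iii) of that lemma is ruled out, yielding the dichotomy: either (a) $A \cap \Delta = \emptyset$, or (b) $\Delta \smallsetminus \st_\Gamma(w) \subseteq A$.

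The second step is a short disjointness argument. Assume for contradiction that distinct $A, A' \in \Omega_G^w$ both intersect $\Delta$. Then option (a) fails for both, so option (b) holds for both, giving $\Delta \smallsetminus \st_\Gamma(w) \subseteq A \cap A'$. Since $\Omega_G^w$ is a partition of $\Gamma \smallsetminus \st_\Gamma(w)$, the intersection $A \cap A'$ is empty, forcing $\Delta \subseteq \st_\Gamma(w)$. But $A \subseteq \Gamma \smallsetminus \st_\Gamma(w)$, which implies $A \cap \Delta = \emptyset$, contradicting the assumption.

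I anticipate no substantive obstacle; the only subtle point is a conventional one, namely that ``acts trivially on $A_\Delta$'' in the definition of $\PAut(A_\Gamma,\mathcal{G},\mathcal{H}^t)$ is the outer-level notion, which is exactly the hypothesis used in Lemma~\ref{DayWade2.2}. Beyond that, the argument reduces to the single observation that the same non-empty subset $\Delta\smallsetminus\st_\Gamma(w)$ cannot be contained in two disjoint blocks of a partition.
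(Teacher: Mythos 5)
Your proof is correct and follows essentially the same route as the paper's: apply Lemma~\ref{DayWade2.2} to each block $A\in\Omega_G^w$ to get the dichotomy (disjoint from $\Delta$, or contains $\Delta\smallsetminus\st_\Gamma(w)$), then invoke disjointness of the partition blocks. The only cosmetic differences are that the paper argues via ``preserves'' and rules out case (iii) by $w\notin\Delta$, whereas you use ``acts trivially'' (which is the correct reading of $A_\Delta\in\mathcal{H}$ and makes (iii) moot from the start), and that you explicitly dispose of the degenerate case $\Delta\smallsetminus\st_\Gamma(w)=\emptyset$, which the paper leaves implicit.
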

\begin{proof} By Lemma \ref{DayWade2.2}, if $c^w_S$ preserves $A_\Delta$ we must have either $S\cap\Delta=\emptyset$, or $\Delta\cap(\Gamma\smallsetminus\st_\Gamma(v))\subseteq S$. Since the elements of $\Omega_G^w$ are disjoint, there exists at most one $S \in \Omega_G^w$ satisfying $\Delta\cap(\Gamma\smallsetminus\st_\Gamma(v))\subseteq S$, and the result follows.
\end{proof}

\begin{lemma} The family of sets $\Omega^w_H$ for $w\in\Gamma$ satisfies condition ii) in the definition of $\PAut$-like \ref{def:PAut} above.
\end{lemma}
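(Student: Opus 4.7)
The approach is to verify condition ii) of Definition~\ref{def:PAut} for each pair of non-linked vertices $v, w \in \Gamma$ by a case analysis on whether $v$ and $w$ lie in $\Delta$. Two facts drive the argument: the $\PAut$-like structure of $G$ already provides the subordinate/shared dichotomy for non-dominant elements of $\Omega_G^\bullet$; and Lemma~\ref{lem:condH} applied to $H$ (which has $A_\Delta$ in its trivializing family) tells us that, for each $w \in \Gamma \smallsetminus \Delta$, at most one element of $\Omega_H^w = \Omega_G^w$ meets $\Delta$.

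The case $v, w \notin \Delta$ is immediate, since the partitions $\Omega_H$ and $\Omega_G$ coincide at these vertices. The case $v, w \in \Delta$ is also straightforward: $S^v$ is the dominant element of $\Omega_H^v$ for this pair (as it contains the component of $\Gamma \smallsetminus \st_\Gamma(v)$ hosting $w \in \Delta$), while every other $L \in \Omega_H^v$ has $L \cap \Delta = \emptyset$, hence $w \notin L$, so $L$ was non-dominant already in $\Omega_G^v$. The subordinate case then transfers because $L \subseteq D_v^w \subseteq S^w$, and the shared case transfers because $L \cap \Delta = \emptyset$ places $L$ in both $\Omega_H^v$ and $\Omega_H^w$.

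The delicate case is the mixed one, say $v \in \Delta$ and $w \in \Gamma \smallsetminus \Delta$. Here the key observation is that the unique element of $\Omega_H^w$ meeting $\Delta$ given by Lemma~\ref{lem:condH} must be $D_v^w$ itself, since $v \in D_v^w \cap \Delta$. Consequently, every $B \in \Omega_H^w$ with $B \neq D_v^w$ has $B \cap \Delta = \emptyset$. Verifying the condition from the $w$-side is then routine: $B$ subordinate in $\Omega_G$ means $B \subseteq D_w^v$, which is contained in the dominant element of $\Omega_H^v$ (namely $D_w^v$ itself if $D_w^v \cap \Delta = \emptyset$, or $S^v$ otherwise), and $B$ shared in $\Omega_G$ places $B$ automatically in $\Omega_H^v$ since $B \cap \Delta = \emptyset$.

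I expect the main obstacle to be the verification from the $v$-side in the sub-case where $D_w^v \cap \Delta = \emptyset$, so that the dominant in $\Omega_H^v$ is $D_w^v$ and $S^v$ is merely another non-dominant element of the partition. The argument I plan for this is as follows: for any component $A \subseteq S^v$ with $A \in \Omega_G^v$ and $A \cap \Delta \neq \emptyset$, we have $A \neq D_w^v$, so $A$ is non-dominant in $\Omega_G$; and if $A$ were shared, then $A \in \Omega_G^w$ would meet $\Delta$, which by Lemma~\ref{lem:condH} forces $A = D_v^w$ and hence $v \in A$, contradicting $A \in \Omega_G^v$. So each such $A$ must be subordinate in $\Omega_G$, giving $A \subseteq D_v^w$, and taking the union over all such $A$ yields $S^v \subseteq D_v^w$, the dominant element of $\Omega_H^w$. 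Thus $S^v$ is subordinate in $\Omega_H$, completing the check.
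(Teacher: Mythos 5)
Your proof is correct and follows the same strategy as the paper: a case split on whether $v$ and $w$ lie in $\Delta$, using Lemma~\ref{lem:condH} to identify the unique $\Delta$-meeting element of each partition as the dominant one, and tracking how subordinate/shared status carries over from $\Omega_G$ to $\Omega_H$. Your treatment of the delicate sub-case (showing each component $A$ of $S^v$ meeting $\Delta$ is forced to be subordinate in $G$, hence $S^v \subseteq D_v^w$) makes explicit what the paper states tersely, but the underlying reasoning is identical.
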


\begin{proof} Let $u,w\in\Gamma$ be non-adjacent vertices. We need to check that the elements of $\Omega_H^u$ and $\Omega_H^w$ are split into the three possible cases (dominant, shared or subordinate). The assertion is trivial if both $w,u$ lie in $\Gamma\setminus\Delta$ because then $\Omega^w_G=\Omega^w_H$ and $\Omega^u_G=\Omega^u_H$.

Assume that $u,w\in\Delta$. There is only one $S^u\in\Omega_H^u$ and only one $S^w\in\Omega_H^w$ that meet $\Delta$, so they must be dominant. The remaining elements of $\Omega_H^u$ and $\Omega_H^w$ are the shared or subordinate components of $\Omega_G^u$ and $\Omega_G^w$ that do not meet $\Delta$, and clearly they retain their status as shared or  subordinate  in $\Omega_H^u$ and $\Omega_H^w$.

By symmetry it remains to consider the case $w \in \Delta$ and $u \in \Gamma \smallsetminus \Delta$. If  $A \in \Omega_H^u$ is not dominant then, similarly to the above, it is either a shared or a subordinate component from $\Omega_G^u$ that does not meet $\Delta$, and it remains shared or subordinate in $H$, since $\Omega_H^u = \Omega_G^u$. On the other hand, if $B \in \Omega_H^w$ is not dominant, then we have two cases:
\begin{enumerate}
 \item $B$ meets $\Delta$: then it is subordinate in $H$, since the dominant component in $\Omega_H^u$ contains all components that meet $\Delta$; or
 \item $B$ does not meet $\Delta$: then it retains its status as shared or subordinate  in the passage from $G$ to $H$.
\end{enumerate}

So, in any case, any element $L$ in $\Omega_H^u$ or $\Omega_H^w$ is shared, dominant or subordinate, and the proof is complete.
\end{proof}
% }

Consider now the family of sets $\Omega^v_P$ for $v\in\Delta$. As we observed above, they satisfy ii) in Definition \ref{def:PAut} but note that the elements in each set  form a subgraph of $\Gamma$  only, not necessarily of $\Delta$. The next lemma implies that we would get exactly the same outcome if we consider the intersection of each subgraph with $\Delta$.

\begin{lemma}\label{lem:intersection} With the same notation as above, for $v_1,v_2\in\Delta$ consider $\Omega_P^{v_i}=\{A\in\Omega_G^{v_i}\mid  A\cap\Delta \neq \emptyset\}$ for $i=1,2$.
Consider also the sets $\Omega_P^{v_i}(\Delta)=\{A\cap\Delta\mid A\in\Omega_G^{v_i}\}$.
 Then for each $A\in\Omega_P^{v_1}$
\begin{itemize}
\item[i)] $A$ is dominant for $\Omega_P^{v_2}$ if and only if $A\cap\Delta$ is dominant for $\Omega_P^{v_2}(\Delta)$,

\item[ii)]  $A$ is shared for $\Omega_P^{v_2}$ if and only if $A\cap\Delta$ is shared for $\Omega_P^{v_2}(\Delta)$,

\item[iii)]  $A$ is subordinate for $\Omega_P^{v_2}$ if and only if $A\cap\Delta$ is subordinate for $\Omega_P^{v_2}(\Delta)$.
\end{itemize}
\end{lemma}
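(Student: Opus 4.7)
The plan is to verify the three forward implications directly, and then to deduce the reverse implications from mutual exclusivity of the three classifications in $\Omega_P^\bullet(\Delta)$, combined with the trichotomy (dominant / shared / subordinate) that the $\PAut$-like structure of $G$ guarantees on the $\Omega_G^\bullet$ side. The setup observation I would start from is that, since $v_1\in\Delta$ and $v_1\in D_{v_1}^{v_2}$, the subset $D_{v_1}^{v_2}\cap\Delta$ is non-empty and is the dominant element of $\Omega_P^{v_2}(\Delta)$ relative to the pair $(v_1,v_2)$.

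Given this, the three forward implications are short. For (i), both sides amount to $v_2\in A$, since $v_2\in\Delta$. For (ii), $A\in\Omega_G^{v_1}\cap\Omega_G^{v_2}$ together with $A\cap\Delta\neq\emptyset$ gives $A\in\Omega_P^{v_1}\cap\Omega_P^{v_2}$, so $A\cap\Delta$ lies in both $\Omega_P^{v_1}(\Delta)$ and $\Omega_P^{v_2}(\Delta)$. For (iii), $A\subseteq D_{v_1}^{v_2}$ gives $A\cap\Delta\subseteq D_{v_1}^{v_2}\cap\Delta$. Next I would establish pairwise exclusivity of the three classifications for $\bar A \coloneqq A\cap\Delta$ in $\Omega_P^\bullet(\Delta)$: dominant versus subordinate is immediate since $v_2\notin D_{v_1}^{v_2}$, and dominant versus shared is immediate since elements of $\Omega_P^{v_2}(\Delta)$ miss $v_2$; for shared versus subordinate, if $\bar A\in\Omega_P^{v_2}(\Delta)$ with $\bar A\subseteq D_{v_1}^{v_2}\cap\Delta$, then since the non-empty elements of $\Omega_P^{v_2}(\Delta)$ are pairwise disjoint, $\bar A=D_{v_1}^{v_2}\cap\Delta$, which places $v_1$ in $\bar A\subseteq A$, contradicting $A\subseteq\Gamma\smallsetminus\st_\Gamma(v_1)$.

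To close the argument: Definition~\ref{def:PAut}(ii) places $A$ in exactly one of the three classes in $\Omega_G^\bullet$, the corresponding forward implication places $\bar A$ in the same class in $\Omega_P^\bullet(\Delta)$, and pairwise exclusivity makes that class unique. All three biconditionals then follow simultaneously. I expect the shared-versus-subordinate incompatibility to be the main delicate point, since a priori the restriction $A\mapsto A\cap\Delta$ could identify distinct components coming from different partitions; the disjointness of $\Omega_G^{v_2}$ together with the fact that $v_1\notin A$ is what rules this out.
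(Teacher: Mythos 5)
Your proposal is correct and takes essentially the same route as the paper: the paper also observes that (i) and the forward directions of (ii) and (iii) are immediate and then notes that this already yields the converses, which is exactly your trichotomy-plus-exclusivity argument spelled out in detail. The paper's version is just more terse, leaving the pairwise exclusivity (which you correctly identified as the delicate point, especially shared versus subordinate) to the reader.
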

\begin{proof} i) is trivial because $v_2\in\Delta$. For ii) and iii) note that the rightward implications are also trivial. In fact, this implies the leftward implications too.
\end{proof}

Before defining the abstract groups $\hat H$ and $\hat P$, we need one more remark about $G$. As we will need later to construct a semidirect product $\hat G=\hat P\ltimes H$  we have to define an action of $\hat P$ on $\hat H$, or more precisely, a group homomorphism $\hat P\to\Aut(\hat H)$. To do that we need to understand first the action of the $\Omega_P^v$-elements of $G$ on the normal subgroup $H$.

\begin{lemma} Let $h=c^w_T$ for $T\in\Omega^w_H$ and $w\in\Gamma$, and let $g=c^v_ A$ where $A$ is a union of sets in $\Omega_P^v$ for $v\in\Delta$. Then
$h^g=h$ unless all the following conditions hold: $w\in A$, and either $v\in T$ or $T\in\Omega_G^w$ is shared for $\Omega_G^v$. 

 When the last conditions do hold, there is some $g_1$ such that $h^{g_1}=h$ and $g_1g = gg_1\in H$  so
$h^g=h^{gg_1}$. %\marginpar{should be $h^g = h^{g_1 g}$ but in the proof $g_1$ and $g$ commute}
\end{lemma}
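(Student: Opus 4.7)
My plan is to handle both parts by carefully applying the commutation criterion in Lemma \ref{lem:relators} and exploiting the trichotomy dominant/shared/subordinate from Definition \ref{def:PAut}. First I would decompose $A$ as a disjoint union of its constituent pieces in $\Omega_P^v$, so that $[c_A^v, c_T^w] = 1$ reduces to checking the commutation piece by piece against $c_T^w$. The case $v \in \st_\Gamma(w)$ is immediate from the first line of the standard relators $R_G^{v,w}$, so I may assume $v, w$ are non-adjacent. If $w \notin A$, then the dominant piece $D_w^v$ is absent from $A$, and each piece of $A$ is either shared (so lies in $\Omega_G^w$, hence is either equal to $T$ or disjoint from it) or subordinate inside $D_v^w$; in every such subcase, one of the three options of Lemma \ref{lem:relators} applies. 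If instead $w \in A$ but $v \notin T$ and $T$ is not shared for $\Omega_G^v$, then $T$ must be subordinate in $\Omega_G^w$, forcing $T \subseteq D_w^v \subseteq A$, and commutation follows from option (iii).

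For the second statement I would exploit the hypothesis $v \in \Delta$: the inner automorphism $\phi_v$ equals the total partial conjugation $c_{\Gamma \smallsetminus \st_\Gamma(v)}^v$ and lies in $\Inn(A_\Gamma) \subseteq H$. Letting $B$ be the union of all $\Omega_G^v$-pieces not contained in $A$ and setting $g_1 = c_B^v$, one has $gg_1 = c_{A \cup B}^v = \phi_v \in H$, while $g_1$ commutes with $g$ because both conjugate by $v$. The equality $h^g = h^{gg_1}$ is then automatic once $g_1$ commutes with $h$.

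The main obstacle is verifying $h^{g_1} = h$. Piecewise, one checks via Lemma \ref{lem:relators} that each $\Omega_G^v$-component of $B$ commutes with $c_T^w$: the pieces of $B$ disjoint from $\Delta$, the subordinate pieces inside $D_v^w$, and shared pieces $S \neq T$ in the case $v \notin T$, all satisfy one of the three commutation options. The delicate subcase is when $v \in T$ (so $T = D_v^w$) and a shared piece $S \in \Omega_G^v \cap \Omega_G^w$ with $S \neq T$ appears in $B$; here $c_S^v$ does not commute with $c_T^w$ directly. I would handle this by invoking the shared-dominant relation $[c_S^v,\, c_S^w c_T^w] = 1$ from the presentation of $G$, augmenting $g_1$ by an appropriate product of compensating $c_S^w$-factors so that the modified $g_1$ commutes with $h$. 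Because each such $c_S^w$ commutes with $g$ (from $w \in A$ together with the partition structure) and can be chosen to leave $gg_1$ in $H$ (using Lemma \ref{lem:condH} and the defining relations of $\Omega_H^w$), the augmentation preserves all three required properties; confirming this absorption carefully is the most technical step of the argument.
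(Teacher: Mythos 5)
Your first paragraph tracks the paper's case analysis in spirit, though with one imprecision worth flagging: a shared piece $S \subseteq A$ lying in $\Omega_G^w$ need not be ``equal to $T$ or disjoint from it.'' When $v \in T$ (so that $T = S^w$, a union of several $\Omega_G^w$-components), a shared $S$ that meets $\Delta$ satisfies $S \subsetneq T$; commutation of $c_S^v$ with $c_T^w$ then follows from option (ii) of Lemma~\ref{lem:relators} (via $v \in T$), not from disjointness. This is repairable, but the case split $v \in T$ vs.\ $v \notin T$ really has to be made explicit, as the paper does.

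The second paragraph contains a genuine error. You take $g_1 = c_B^v$ with $B$ the complement of $A$ in the whole of $\Gamma \smallsetminus \st_\Gamma(v)$, which pulls in $\Omega_G^v$-pieces disjoint from $\Delta$. You correctly observe that for such a shared piece $S$ with $S \cap \Delta = \emptyset$ (in the subcase $v \in T$), $c_S^v$ fails to commute with $h$. But your repair fails: you assert that $c_S^w$ commutes with $g = c_A^v$ ``from $w \in A$,'' whereas Lemma~\ref{lem:relators} gives exactly the opposite. Here $S \cap A = \emptyset$ and $v \notin S$, so option (i) would require $w \notin A$, which is precisely the hypothesis that fails in this part; options (ii) and (iii) fail as well since neither $A \subseteq S$ nor $S \subseteq A$. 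Hence $[c_S^w, c_A^v] \neq 1$, the augmented $g_1$ no longer commutes with $g$, and the requirement $g_1 g = g g_1$ is lost. The paper avoids this entirely by taking $L$ to be the union of only the $\Omega_P^v$-pieces not contained in $A$: then $A \cup L = S^v \in \Omega_H^v$, so $g g_1 = c_{S^v}^v$ is literally a standard generator of $H$, and $h^{g_1} = h$ follows at once from the first part of the lemma (applied with $L$ in place of $A$, since $w \notin L$). No $\Omega_G^v$-pieces disjoint from $\Delta$ ever enter, so the problematic shared components you wrestled with never arise.
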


\begin{proof}  We check first that the action is trivial if $w\not\in A$. This includes the cases when $v=w$ or $v$ and $w$ are linked in $\Gamma$, in which the action is obviously trivial, so we assume that $v\neq w$ are not linked. Recall that there is only one element $S^w\in\Omega_H^w$ that meets $\Delta$ so $v\in S^w$. Then:

If $v\not\in T$, then $T$ also lies in $\Omega_G^w$ and can be either shared of subordinate for $\Omega_G^v$. If it was shared it would not be one of the components of $A$ (because they all meet $\Delta$ and $T$ does not) so in both cases we get $h^g=h$.

If $v\in T$, then $T=S^w$. Now, we distinguish two cases. Assume first that $w\not\in\Delta$. Then $S^w\in\Omega_G^w$ is the dominant component for $\Omega_G^v$. We have $w\not\in A$ and $A$ is a union of components of $\Omega_G^v$ which can not be dominant (because $w\not\in A$) and can not be shared (because they are different from $S^w$ and they meet $\Delta$). So they are all subordinate, i.e., $h^g=h$.
%In this case $\Omega_G^w = \Omega_H^w, so S^w is the unique component that meets \Delta. It cannot be shared because it contains v, so all shared components intersect \Delta trivially

We are left with the case when $w\in\Delta$. Then $S^w$ is a union of those components in $\Omega^w_G$ that meet $\Delta$. One of them contains $v$ so one of them is the dominant component for $\Omega_G^v$. As $w\not\in A$, $A$ is a union of subordinate and shared components for $\Omega_G^w$. But all those shared components meet $\Delta$ so they end up in $S^w$. This means that $S^w$ contains both the dominant and each of those shared that may be in $A$. Thus $h^g=h$.

This finishes the proof that the action is trivial if $w\not\in A$ so we assume now that $w\in A$. Clearly, if $T\subseteq A$ the action is trivial (use for example Lemma \ref{lem:relators}). Then the only cases when it might not be trivial are when $v\in T$, i.e., $T=S^w$ or when $T$ is shared for $\Omega_G^v$.

For the last paragraph, note first that if $g\in H$ we can take $g_1=1$. And if $g\not\in H$ then there must be some set in $\Omega_P^v$ not in $A$. Let $L$ be the union of those sets and $g_1=c^v_L$. We have $h^{g_1}=h$ because $w\not\in L$ and $gg_1\in H$ because $A\cup L=S^v\in\Omega^v_H$.
\end{proof}

Note that the condition $w\in A$ in the statement implies that $v\neq w$ are not linked in $\Gamma$ and that if $v\in T$ we must have that $T=S^w$ is both dominant and the only element of $\Omega_H^w$ that meets $\Delta$.

\begin{remark} As the elements $c_L^v$ with $L \in \Omega_P^v$ and $v\in\Delta$ together with $H$ generate the group $G$, this lemma implies that $G$ acts trivially on the abelianization $H/H'$.
\end{remark}

We can define now the abstract groups $\hat H$ and $\hat P$ as the groups given by the $\PAut$ presentation as in Definition \ref{def:PAut} for the generating sets $\Omega^w_H$ for $w\in\Gamma$ and $\Omega^v_P$ for $v\in\Delta$. To do that, we denote the standard generating system for $\hat H$ by $X$ so that there is a bijection $s\colon X\to\cup_{w\in\Gamma}\Omega^w_H$ ($s$ from {\sl set}). We will use the superscript $w$ to make clear that the image by $s$ of a given element $a^w\in X$ lies in $\Omega^w_H$ and use the notation $s$ also for union of sets in the same $\Omega^w_H$, so if $x=x^w$ and $x_1=x_1^w$, then $s(xx_1)=s(x)\cup s(x_1)$.  Similarly, we denote the standard generating system for $\hat P$ by $Y$ so that there is a bijection $s:Y\to\cup_{v\in\Delta}\Omega^v_P$ with the same convention about superscripts.
We also denote by $R_{\hat H}^{u,w}$ for $u,w\in\Gamma$ and $R_{\hat P}^{v_1,v_2}$ for $v_1,v_2\in\Delta$ the set of relators as in Definition \ref{def:PAut} but in terms of $X$ and $Y$ resp. (via the bijection $s$). Then we set
\[\hat H=\langle X\mid R_{\hat H}^{u,w},u,w\in\Gamma\rangle,\]
\[\hat P=\langle Y\mid R_{\hat P}^{v_1,v_2},v_1,v_2\in\Delta\rangle.\]

Let $x=x^w\in X$ and $y=y^v\in Y$ be standard generators with $w\in\Gamma$ and $v\in\Delta$. Let $s(y)=A\in\Omega_P^v$, $s(x)=T\in\Omega^w_H$, $g=c^v_A$ and $h=c^w_T$. Let also $z=z^v\in X$ be such that $s(z)=S^v\in\Omega^v_H$. Then we define the action of $\hat{P}$ on $\hat{H}$ by
\begin{equation} \label{def.action.P.H}
x^y\coloneqq \Bigg\{
\begin{aligned}
&x\text{ if }h^g=h\\
&x^z\text{ otherwise.}\\
\end{aligned}
\end{equation}
In other words, we set $x^y=x$ unless all the following conditions hold: $w\in A$, and either $v\in T$ or $T\in\Omega_G^w$ is shared for $\Omega_G^v$.

\begin{remark}  \label{rem:action}
We collect here for further reference some properties of the action above that follow from the definition.
\begin{itemize}
\item[i)] The definition can be extended to allow $y$ be also a product of standard generators in $Y$ all having $s$-image in the same $\Omega_P^v$.  More precisely, we allow $s(y)=A$ for $A$ a union of some of the sets in $\Omega^v_P$ and denote by $y_1=y_1^v$ the element such that for $L=s(y_1)$ we have $A\cap L=\emptyset$ and $A\cup L=\cup\Omega^v_P=S^v\in\Omega^v_H$ (here, $L$ could be empty which means that $y_1=1$). Finally, we denote by $z=z^v\in X$ the element with $s(z)=S^v$. Then we have
for any $x=x^w$, either $x^y=x$ or $x^{y_1}=x$. Moreover, in both cases $x^{yy_1}=x^z$.

\item[ii)] With the same notation as in i), for any $x=x^w$, if $x\neq x^y$ then we must have $w\in A = s(y)$. So we deduce that $x_1^{y_1}=x_1$  for any $x_1=x_1^w$.

\item[iii)] If $x=x^w$ is such that $s(x)=T$ is shared for $\Omega_G^w$ and $\Omega_G^v$, then $[x,zt]=1$ in $\hat H$ (it follows from the $\PAut$-like structure) for $t=t^v\in X$ such that $s(t)=T$. Therefore
\[(x^y)^t=(x^{yy_1})^t=x^{zt}=x.\]
\end{itemize}
\end{remark}

In the next two lemmas we show that \eqref{def.action.P.H} determines a well-defined action of $\hat{P}$ on $\hat{H}$.

\begin{lemma}\label{lem:welldef1} For any $y=y^v\in Y$ standard generator, the map $x\mapsto x^y$ extends to a group isomorphism $\xi_y \colon \hat H\to\hat H$ which induces the identity map on the abelianization $\hat H/\hat H'$.
\end{lemma}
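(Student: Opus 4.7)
The plan is to extend the assignment $\xi_y \colon X \to \hat H$, $x \mapsto x^y$, to an endomorphism of $\hat H$ by verifying that every standard relator of $\hat H$ is sent to the identity, then to exhibit an inverse, and finally to observe that the induced action on the abelianization is trivial. Since every generator $x \in X$ satisfies $\xi_y(x) \in \{x, x^z\}$ with $z = z^v$ the generator corresponding to $S^v \in \Omega_H^v$, the last two points are easy: the inverse will be obtained by replacing $z$ by $z^{-1}$, and conjugates collapse to the original element modulo $\hat H'$, so $\xi_y$ is the identity on $\hat H/\hat H'$.

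Partition $X = X_0 \sqcup X_1$ into the generators fixed by $\xi_y$ and those sent to their $z$-conjugate. For a type-1 standard relator $[x_1, x_2] = 1$ with both generators in the same $X_i$, the image under $\xi_y$ is either the relator itself (if $i = 0$) or its $z$-conjugate $[x_1, x_2]^z$ (if $i = 1$), so it equals $1$ in $\hat H$. The delicate case is $x_1 \in X_0$, $x_2 \in X_1$ (or the symmetric one), where $\xi_y([x_1, x_2]) = [x_1, x_2^z]$. A direct manipulation in $\hat H$ gives
\[
[x_1, x_2^z] = z^{-1}[x_1, x_2]\, z \quad \text{as soon as } [x_1, z] = 1,
\]
so the whole verification reduces to establishing this last commutation. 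For type-2 standard relators $[x_1, x_2 x_3] = 1$ an analogous but more intricate reduction shows that the image becomes trivial provided the relevant commutators $[x_i, z]$ all vanish in $\hat H$, together with a conjugation identity of the same shape as above.

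The main obstacle is therefore to prove $[x_1, z] = 1$ in $\hat H$ whenever $[x_1, x_2]$ (or its type-2 analogue) is a standard relator with $x_1 = x_1^u \in X_0$ and $x_2 = x_2^w \in X_1$. If $u \in \st_\Gamma(v)$, then $[x_1, z]$ is itself a standard relator of type (i) and there is nothing to prove. Otherwise, one unpacks the definition of $X_1$, which forces $w \in s(y) \subseteq S^v$ together with either $v \in s(x_2)$ or $s(x_2)$ shared for $\Omega_G^v$ and $\Omega_G^w$, and combines this with Lemma~\ref{lem:relators} applied to $[x_1, x_2] = 1$; a case split according to whether $u \in \Delta$ (so that the distinguished element $S^u \in \Omega_H^u$ is available) or $u \notin \Delta$ (so that $\Omega_H^u = \Omega_G^u$) then identifies, in each configuration, a subordinate or disjoint position of $s(x_1)$ relative to $S^v = s(z)$, making $[x_1, z]$ a type-(ii) standard relator or a consequence of one via a type-2 relation at a shared component. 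The main obstacle is precisely this bookkeeping of the subordinate, shared, and dominant positions across the partitions $\Omega_G^\bullet, \Omega_H^\bullet, \Omega_P^\bullet$; once it is carried out, the well-definedness of $\xi_y$ follows, the inverse is obtained by the same construction with $z^{-1}$ in place of $z$, and the abelianization statement is immediate.
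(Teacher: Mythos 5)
Your reduction to proving $[x_1, z] = 1$ is the crux, and it does not go through in all the configurations that actually occur. Consider a standard type-1 relator $[x_1, x_2] = 1$ with $x_1 = x_1^u \in X_0$ and $x_2 = x_2^w \in X_1$, in which $s(x_1) = U$ is a \emph{shared} component for the pair $(u,v)$ and $u$ lies in $S^v \smallsetminus A$ (so indeed $x_1^y = x_1$, because $u \notin A$). This is precisely the situation in Case~1 of the paper's proof, with $x_1 = \beta$ and $x_2 = \alpha$. In that configuration the only relator of $\hat H$ linking $x_1 = c^u_U$ with $z = c^v_{S^v}$ is the type-2 relator $[c^u_U,\, c^v_U\, c^v_{S^v}] = 1$: condition (i) of Lemma~\ref{lem:relators} fails because $u \in S^v$, and $[c^u_U, c^v_U]$ is not a standard relator (equal shared components). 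Hence $[x_1, z] \neq 1$ in $\hat H$, and your assertion that disjointness or subordination of $s(x_1)$ relative to $S^v$ — possibly combined with a type-2 relation — yields the required commutation is false in exactly this case.

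The paper's proof handles this by conjugating by a different auxiliary element. In Case~1 it uses $t = t^v$ with $s(t) = W$, where $W$ is the shared component attached to the \emph{moved} generator $\alpha = x_2$: from the type-2 relation $[\alpha, zt] = 1$ one gets $(\alpha^y)^t = \alpha$, and $[\beta, t] = 1$ is a genuine standard relator (both $W$ and $U$ shared, $W \neq U$), so conjugating the image of the relator by $t$ recovers $[\alpha, \beta]$. Case~2 requires a similar choice of $t$, now adapted to a product relator. In other words, the auxiliary conjugator must depend on the shared component in question, not uniformly be $z$; your plan as written has a genuine gap at precisely the reduction you flagged as the main obstacle.
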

\begin{proof} We have to check that $\xi_y$ preserves the relators in $R_{\hat H}^{u,w}$ for any $u,w\in\Gamma$. Let $[\alpha,\beta]=1$ with $\alpha=\alpha^w$ and $\beta=\beta^u$ be one of these relators where we allow either $\alpha$ or $\beta$ (but not both) to be a product of two standard generators. For the proof it will be also convenient to allow also $y$ to be  a product of standard generators in $Y$ all having $s$-image in the same $\Omega_P^v$, as we did in Remark~\ref{rem:action}. We will use the same notation as there, in particular $A=s(y)$ and there is an element $y_1=y_1^v$ with $L=s(y_1)$ and $z\in X$ with $s(z)=A\cup L$ so that the action of the composition $\xi_y\xi_{y_1}$ is conjugation by $z$.

Observe first that if 
\begin{equation}\label{eq:cond}
\text{either }\alpha^y=\alpha,\beta^y=\beta\text{ or }\alpha^{y_1}=\alpha,\beta^{y_1}=\beta
\end{equation}
then the relation is preserved. In the first case this is obvious because
\[[\alpha^y,\beta^y]=[\alpha,\beta]=1\]
and in the second we have
\[[\alpha^y,\beta^y]=[\alpha^{yy_1},\beta^{yy_1}]=[\alpha^{z},\beta^{z}]=[\alpha,\beta]^z=1.\]

We have (\ref{eq:cond}) if $\alpha^y=\alpha^{y_1}=\alpha$. To see it, note that by Remark \ref{rem:action} i) and ii), either $\beta^y=\beta$ or $\beta^{y_1}=\beta$. % (we need ii) because $\beta$ could be a product of two standard generators).
This is the case if $v=w$ or $v$ and $w$ are linked or, more generally, if $w\not\in A\cup L$. So we may assume now that $w\in A$. In fact, by symmetry we can also assume that $u\in A\cup L$. If both $u,w$ are in $L$, then we have (\ref{eq:cond}) again so from now on we assume $w\in A$, $u\in L$, the opposite case being analogous. Observe that as $A$ and $L$ are unions of connected components of $\Gamma\setminus\st_\Gamma(v)$, this implies that $w\neq u$ are not linked in $\Gamma$. We may also assume that $\alpha^y\neq \alpha$ and $\beta^{y_1}\neq\beta$ and by definition this implies that either $v\in s(\alpha)$ or $s(\alpha)$ contains a shared component for $\Omega_G^w$ and $\Omega_G^v$, same considerations apply to $\beta$.

Consider the set $\Omega_H^w$. We have
\[\Omega_H^w=\{S^w,W_1,\ldots,W_k\}\]
where $S^w$ is the only one that meets $\Delta$, moreover the $W_i$ lie also in $\Omega_G^w$. With respect to $\Omega_G^v$, each $W_i$ can be either subordinate, which implies $W_i\subseteq A$ (recall that we are assuming $w\in A$) or shared, which means that $W_i$ is one of the elements of $\Omega_G^v$ which do not meet $\Delta$. In both cases we deduce that $u\not\in W_i$ because $u\in L$. So we see that $u\in S^w$, i.e., $S^w$ is also the dominant component for $\Omega_H^u$. By the same argument, $S^u\in\Omega_H^u$ is the dominant component for $\Omega_H^w$.

The last paragraph implies that in the relator $[\alpha,\beta]$ we can not have at the same time $v\in s(\alpha)$ and $v\in s(\beta)$ because then $S^w\subseteq s(\alpha)$ and $S^u\subseteq s(\beta)$ and since these are the dominant components for $u,w$ this is not a valid relator in the $\PAut$-like structure of $\hat H$. So by symmetry we only have to consider the following two possibilities:

\textbf{Case 1:} $s(\alpha)=W$ is shared for $w$ and $v$ and $s(\beta)=U$ is shared for $u$ and $v$, moreover we must have $W\neq U$. In this case $W$ does not meet $\Delta$ so there is $t=t^v\in X$ with $s(t)=W$ with $(\alpha^y)^t=\alpha$ (see Remark \ref{rem:action} iii)). As $W\neq U$, we also have $[\beta,t]=1$ in $\hat H$ so
\[[\alpha^y,\beta^y]^t=[\alpha,\beta]=1.\]

\textbf{Case 2:} $s(\alpha)=S^w\cup T$ and $s(\beta)=T$ where $T$ is shared for $w,u$ and also shared for $u,v$ (and does not meet $\Delta$). Again, let $t=t^v\in X$ with $s(t)=T$. Then $[\alpha,t]=1$ is a relator of $\hat H$. We also have $\alpha^{y_1}=\alpha$ (Remark \ref{rem:action} ii)) and $\beta^y=\beta$. And on the other hand, Remark \ref{rem:action} iii) implies $(\beta^{y_1})^t=\beta$. So we get
\[[\alpha^y,\beta^y]=[(\alpha^{yy_1})^t,(\beta^{yy_1})^t]=[\alpha,\beta]^{zt}=1.\qedhere \]
\end{proof}

\begin{lemma}\label{lem:welldef2} The map $y\mapsto\xi_y$ extends to a homomorphism $\xi\colon \hat P\to\Aut(\hat H)$.
\end{lemma}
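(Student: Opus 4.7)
My plan is to invoke the universal property of the presentation of $\hat P$: the assignment $y\mapsto \xi_y$ extends to a group homomorphism $\xi\colon\hat P\to\Aut(\hat H)$ if and only if for every defining relator $r$ of $\hat P$ the automorphism $\xi_r$ is the identity on $\hat H$. Since $\hat H$ is generated by $X$, this further reduces to checking that $\xi_r(x)=x$ in $\hat H$ for every $x=x^w\in X$ and every $r\in R_{\hat P}^{v_1,v_2}$ with $v_1,v_2\in\Delta$.

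The defining relators of $\hat P$ fall into the two families dictated by Definition~\ref{def:PAut}: commutator relators $[y^{v_1}_A,y^{v_2}_B]=1$, and shared--dominant relators of the form $[y^{v_1}_A,\,y^{v_2}_Ay^{v_2}_{D^{v_2}_{v_1}}]=1$. Fix such a relator $r$ and a generator $x=x^w$. By the rule \eqref{def.action.P.H}, each $\xi_{y_i}$ sends $x$ either to $x$ or to the conjugate $x^{z^{v_i}}$, where $z^{v_i}\in X$ is the generator with $s(z^{v_i})=S^{v_i}$, and similarly sends $z^{v_j}$ (for $j\neq i$) to itself or to its conjugate by $z^{v_i}$. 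Expanding $\xi_r(x)$ in this way produces a word in $\hat H$ involving only $x$ and the elements $z^{v_1},z^{v_2}$, and one must verify that this word reduces to $x$ using the $\PAut$-like relations of $\hat H$ listed in $R_{\hat H}^{v_1,w}$, $R_{\hat H}^{v_2,w}$ and $R_{\hat H}^{v_1,v_2}$.

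The justification that this reduction succeeds is that the very same relators $r$, read as identities between the corresponding partial conjugations $c^{v_i}_{s(y_i)}$ in $G$, do hold in $G$ because $G$ is $\PAut$-like. The lemma immediately preceding \eqref{def.action.P.H} shows that for $h=c^w_T\in H$ and $g=c^v_A$ with $A$ a union of elements of $\Omega^v_P$, either $h^g=h$ or else $h^g=(h^{gg_1})^{g_1^{-1}}$, with $gg_1$ the partial conjugation $c^v_{S^v}$ corresponding to $z^v$ and $g_1$ commuting with $h$. Thus the equality $h^{g_1g_2}=h^{g_2g_1}$ in $G$ translates directly into the identity in $\hat H$ that we are trying to check; the role of the abstract group $\hat H$ is precisely to absorb the ambiguity coming from the $g_1$-factors, which act trivially on $h$ and therefore do not appear in the $\hat H$-level computation.

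The principal obstacle is the combinatorial case analysis required to make this rigorous. One must, for each $r$ and each $x=x^w$, split into subcases according to whether $w$ is linked to $v_1$ or $v_2$ in $\Gamma$ and according to which category (dominant, shared or subordinate) the sets $s(y_1),s(y_2),s(x)$ fall into relative to the appropriate pairs. The trickiest situation parallels Case~2 of the proof of Lemma~\ref{lem:welldef1}: when $T=s(x)$ is shared for the pairs $(w,v_i)$ while simultaneously $w\in s(y_i)$, both $\xi_{y_1}$ and $\xi_{y_2}$ act nontrivially on $x$, and one has to introduce auxiliary generators $t=t^{v_i}\in X$ with $s(t)=T$ and use Remark~\ref{rem:action}(iii) to cancel out the spurious conjugations inside $\hat H$. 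For the shared--dominant relators one additionally uses Remark~\ref{rem:action}(i) to identify the composite $\xi_{y^{v_2}_A}\xi_{y^{v_2}_{D^{v_2}_{v_1}}}$ on the relevant generators with honest conjugation by $z^{v_2}$, which reduces that case to the first family of relators already handled.
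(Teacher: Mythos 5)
Your proposal correctly identifies the strategy the paper follows: reduce to verifying that $\xi$ kills each defining relator of $\hat P$, observe that the outcome is a word in $x$, $z^{v_1}$, $z^{v_2}$ inside $\hat H$, and check that this word reduces to $x$ via the $\PAut$-like relations of $\hat H$, using Lemma~\ref{lem:relators} for the shape of the relators in $\hat P$, Remark~\ref{rem:action} for the action, and auxiliary generators $t$ for the hard cases. That is indeed the skeleton of the paper's proof, which shows $(h^a)^b = (h^b)^a$ for each relator $[a,b]$ of $\hat P$ and each standard generator $h$ of $\hat H$, splitting according to whether $h^a = h$, $h^b = h$, and which of the three alternatives of Lemma~\ref{lem:relators} holds for $A = s(a)$, $B = s(b)$.

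However, as it stands your argument has a genuine gap. The sentence asserting that ``the equality $h^{g_1g_2}=h^{g_2g_1}$ in $G$ translates directly into the identity in $\hat H$'' is exactly the statement that needs to be proved, and it is not automatic: $\hat H$ is an abstract group defined by a presentation, and at this point of the argument we do \emph{not} yet know that $\hat H \cong H$ (that is the eventual conclusion of this section). So one cannot import identities from $G$ into $\hat H$; one has to derive the required identity purely from the relators $R_{\hat H}^{\bullet,\bullet}$. You acknowledge that a case analysis is needed but defer all of it, whereas that analysis constitutes essentially the entire proof. You also misplace where the auxiliary generator $t$ is used: in the paper it appears in the mixed case $h^a = h$, $h^b \neq h$ with $B \subseteq A$ and $v_2 \in A$ (alternative (iii) of Lemma~\ref{lem:relators}), not in the case where both $\xi_{y_1}$ and $\xi_{y_2}$ act nontrivially on $x$; the latter case is handled directly with the elements $z_a$, $z_b$ and the observations $z_a^b = z_a$, $z_b^a = z_b^{z_a}$. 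To turn your sketch into a proof you need to actually carry out the case split and verify each identity inside $\hat H$ using only its own presentation.
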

\begin{proof} We have to check that $\xi$ preserves the relators of $\hat P$. Let $v_1,v_2\in\Delta$ and consider $a=a^{v_1}$, $b=b^{v_2}$ such that $[a,b]=1$ in $\hat P$ where $a$ or $b$ are either standard generators or one of them  (but not both) can be a product of two standard generators. Let $h=h^w\in X$ be a standard generator, we claim that \begin{equation} \label{action.commutes} (h^a)^b=(h^b)^a. \end{equation}
Let $A=s(a)$ and $B=s(b)$. By Lemma \ref{lem:relators} one of the following three options hold: either i) $A\cap B=\emptyset$, $v_1\not\in B$ and $v_2\not\in A$; or ii) $A\subseteq B$ and $v_1\in B$; or iii) $B\subseteq A$ and $v_2\in A$. A first observation is that \eqref{action.commutes} holds trivially when $v_1=v_2$ or if $v_1$ and $v_2$ are linked so we assume this is not the case.

Assume first that 
 $h^a=h$. If also $h^b=h$, the claim is obvious. So we assume $h^b\neq h$. By Remark \ref{rem:action} this implies that there is a $b_1\in\hat P$ and $z=z^{v_2}\in\hat H$ such that $h^b=h^{bb_1}=h^z$. We also deduce $w\in B$. As $z\in\hat H$, again by Remark~\ref{rem:action} we see that if $v_2\not\in A$, then $z^a=z$.
This implies the claim because we have:
\[(h^a)^b=h^b=h^z,\]
\[(h^b)^a=(h^z)^a.\]
So in cases i) or ii) above we are done. We assume that we have iii). i.e., that  $B\subseteq A$ and $v_2\in A$. Then $w\in B$ implies $w\in A$. Moreover, $w$ and $v_1$ can not be linked %because if they were, we would have $v_1\in B\subseteq A$ which is impossible.
because $w \in A \subset \Gamma \smallsetminus \st(v_1)$. The fact that $h^a=h$ implies that $s(h)$ can not be $S^w$, i.e., the only element of $\Omega^w_H$ that meets $\Delta$. So taking into account that $h^b\neq h$ then Remark \ref{rem:action} implies that $s(h)=T\in\Omega^w_H$ must be shared in $\Omega_G^{v_2}$. As $h^a=h$, $T$ can not be shared in $\Omega_G^{v_1}$. %If T were shared for \Omega_G^{v_1}, then the relation [c_T^w , c_A^{v_1}] = 1 with w in A could conceivably happen with A =T \cup dominant.  But since T comes from \Omega_H^w, it is the unique component in \Omega_G^v meeting \Delta. In particular, T contains v_1, which is absurd.
Obviously, $v_1\not\in T$ so $T$ (which also lies in $\Omega_G^{w}$) must be subordinate for $\Omega_G^{v_1}$, i.e., $T\subseteq A$. Let $t=t^{v_2}\in X$ with $s(t)=T$, then by Remark \ref{rem:action} iii) we have $(h^b)^t=(h^z)^t=h^{zt}=h$.  Moreover, we have $t^a=t$. Therefore

\[((h^a)^b)^t=(h^b)^t=h^{zt}=h,\]
\[((h^b)^a)^t=((h^{z})^a)^t=t^{-1}(h^z)^at=(h^{zt})^a=h^a=h\]
so we get the claim.

Next, we assume that $h^a\neq h$ and $h^b\neq h$. Then we must have $w\in A\cap B$ so we can not have i) above, i.e. either ii) or iii) must hold true. By symmetry we can assume for example iii), so $B\subseteq A$ and $v_2\in A$. Moreover, $h^a=h^{z_a}$ and $h^b=h^{z_b}$.
We also have $z_a^b=z_a$ because $v_1\not\in B$. And $z_b^a=z_b^{z_a}$ because $v_2\in A$ and $s(z_b)=S^{v_2}$ is the only element in $\Omega_H^{v_2}$ that meets $\Delta$ so it contains $v_1$. Therefore 
\[(h^a)^b=(h^{z_a})^b=(z_a^{-1})^bh^bz_a^b=z_a^{-1}h^{z_b}z_a=h^{z_bz_a},\]
\[(h^b)^a=(h^{z_b})^a=(z_b^{-1})^ah^az_b^a=(z_b^{-1})^{z_a}h^{z_a}(z_b^{z_a})=(h^{z_b})^{z_a}\]
so we are done.
\end{proof}

Now, we can define a semidirect product $\hat G=\hat P\ltimes_\xi\hat H$. In $\hat G$ we are going to define a subgroup $Z$ as follows. For any $v\in\Delta$, there is a $z = z^v\in\hat H$ such that $s(z)=S^v\in\Omega_H^v$. Let $y_1 = y^v_1,\ldots,y_t = y^v_t$ be all the elements in $Y$ which are the preimage under $s$ of $\Omega^v_P$ and let $Z$ be the group generated by all the elements of the form
\[(\prod_iy_i)z^{-1}=z^{-1}(\prod_iy_i).\]
for $v\in\Delta$.

\begin{lemma}\label{lem:Z} The subgroup $Z$ is normal in $\hat G$ and $Z\cap\hat H=1$.
\end{lemma}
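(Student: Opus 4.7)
I will establish the two assertions separately: first normality of $Z$ in $\hat G$, then triviality of $Z\cap \hat H$.

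For normality, the task reduces to verifying that $\hat g\, g_v\, \hat g^{-1}\in Z$ for every standard generator $\hat g\in X\cup Y$ of $\hat G$ and every generator $g_v=p^v(z^v)^{-1}$ of $Z$. The easiest case is $\hat g=y_i^v$ with the same superscript as $v$: in $\hat P$ two standard generators sharing a superscript commute by relation (i), so $y_i^v$ commutes with $p^v=\prod_j y_j^v$; and a direct inspection of \eqref{def.action.P.H} gives $\xi_{y_i^v}(z^v)=z^v$, since $v\notin s(y_i^v)=A_i$. Hence $y_i^v\, g_v\, (y_i^v)^{-1}=g_v\in Z$. For the remaining cases I would split the conjugate using the semidirect product formula $p x p^{-1}=\xi_p(x)$: if $\hat g=y^u\in Y$ then
\[\hat g\, g_v\, \hat g^{-1}=\bigl(\hat g\, p^v\, \hat g^{-1}\bigr)\cdot \xi_{\hat g}\bigl((z^v)^{-1}\bigr),\]
with the first factor in $\hat P$ and the second in $\hat H$; if $\hat g\in X$, then $\hat H$ is normal so the $\hat H$-part is automatic, and the $\hat P$-factor is transported by the inverse action. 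In each case, by combining the defining relations of $\hat P$ and $\hat H$ (Definition~\ref{def:PAut}) with Remark~\ref{rem:action} and Lemma~\ref{lem:condH}, I would rewrite the result as a product of $g_u^{\pm1}$'s possibly multiplied by factors that cancel because they already commute with $g_v$.

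For $Z\cap \hat H=1$, I would use the natural projection $\pi\colon \hat G=\hat P\ltimes_\xi \hat H\to \hat P$ with kernel $\hat H$. Under $\pi$ the generator $g_v$ maps to $p^v=\prod_i y_i^v$, so it suffices to show that $\pi|_Z$ is injective. The key input is that $\hat P$ is $\PAut$-like, so all its defining relators are commutators and the abelianization $\hat P^{\mathrm{ab}}$ is free abelian on the basis $\{\bar y_i^v\}_{v\in\Delta,\,i}$. In this basis the classes $\bar p^v=\sum_i \bar y_i^v$ have pairwise disjoint supports for distinct $v\in\Delta$ and are therefore linearly independent. Together with the triviality of the $\xi$-action on $\hat H^{\mathrm{ab}}$ (Lemma~\ref{lem:welldef1}), which yields the decomposition $\hat G^{\mathrm{ab}}\cong \hat H^{\mathrm{ab}}\oplus \hat P^{\mathrm{ab}}$, this linear independence—combined with the explicit formulas for conjugation obtained in the normality step—forces any word in $\{g_v\}$ whose $\pi$-image is trivial to be trivial in $Z$, giving $Z\cap \hat H=\ker(\pi|_Z)=1$.

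The main obstacle will be the lengthy bookkeeping in the normality step: one must track whether $\xi_{y^u}(z^v)$ equals $z^v$ or the conjugate $(z^v)^{z^u}$, which depends on the relative configuration of $u,v$ in $\Gamma$ and on whether the relevant elements of $\Omega_G^u,\Omega_G^v$ are shared, dominant, or subordinate. The rest of the argument is then a careful but essentially mechanical bookkeeping of the semidirect product structure.
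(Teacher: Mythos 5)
The normality sketch is roughly in the right spirit, but the paper's argument is cleaner in one important respect that you miss: by Remark~\ref{rem:action}(i), the product $\prod_i y_i^v$ acts on any $x\in X$ exactly as conjugation by $z^v$ does, so each generator $\nu=(\prod_i y_i^v)(z^v)^{-1}$ actually \emph{centralizes} all of $\hat H$, not merely normalizes $Z$ under $\hat H$. You gesture at ``the $\hat H$-part is automatic,'' but the stronger centralizing fact is what the paper uses throughout, both to reduce conjugation by $\bar y\in Y$ to conjugation by $\bar\nu$ (writing $\nu^{\bar y}=\nu^{\prod_i\bar y_i}=\nu^{\bar z^{-1}\prod_i\bar y_i}=\nu^{\bar\nu}$) and, crucially, in the second half of the lemma.

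The argument for $Z\cap\hat H=1$ has a genuine gap. Your projection $\pi\colon\hat G\to\hat P$ and the linear independence of the classes $\bar p^v=\sum_i\bar y_i^v$ in $\hat P^{\mathrm{ab}}$ only show that any element $w\in\ker(\pi|_Z)$ has trivial image in $\hat G^{\mathrm{ab}}$, i.e.\ $w\in\hat G'$; they do not show $w=1$. To conclude injectivity you would need $Z$ to be abelian (so that $Z\hookrightarrow Z^{\mathrm{ab}}$), and nothing in your sketch establishes that. The phrase ``combined with the explicit formulas for conjugation\dots forces\dots trivial'' is exactly where the missing argument lives. The paper instead uses the centralizing fact above: since $Z$ centralizes $\hat H$, any element of $Z$ factors as $ab$ with $a$ in the subgroup of $\hat P$ generated by the $p^v$ and $b$ in the subgroup $L\leq\hat H$ generated by the $z^v$; thus $Z\cap\hat H\leq L\cap Z(\hat H)\leq Z(L)$. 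Then $L\cong A_\Delta$ (because $z^u,z^v$ commute iff $u,v$ are linked, and there are no mixed type-(2) relators among them), and $Z(A_\Delta)=1$ by the standing hypothesis that no vertex of $\Delta$ is linked to all other vertices of $\Delta$. You never invoke that hypothesis, yet the statement is \emph{false} without it: if $\Delta$ had such a vertex, $Z(A_\Delta)\neq1$ and $Z\cap\hat H$ could be nontrivial. So the abelianization route as you describe it cannot close the argument; the centralizer-of-$\hat H$ observation and the identification $L\cong A_\Delta$ are the missing ideas.
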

\begin{proof} Observe first that Remark \ref{rem:action} i) implies that the generators of $Z$ commute with the elements of $X$ so $Z$ is centralized by $\hat H$ and $Z\cap\hat H$ lies in $Z(\hat H)$, the center of $\hat H$.

 Take $v,\bar v\in\Delta$, $\bar y=\bar y^{\bar v} \in Y$
and as before, $z=z^v\in\hat H$ such that $s(z)=S^v\in\Omega_H^v$ and let  $y_1,\ldots,y_t$ be all the elements in $Y$ which are the preimage under $s$ of $\Omega^v_P$  so that $\nu=(\prod _iy_i)z^{-1}\in Z$ is one of the generators.

In order to show that $Z$ is normal, it remains to check that $\nu^{\bar y}\in Z$, for which we distinguish two cases. First, if $z^{\bar y}=z$, then the fact that $s(z)=S^v$ implies $v\not\in s(\bar y)$. This means that $s(\bar y)$ can be either subordinate or shared for $v$ in the $\PAut$-like structure of $\hat P$ and therefore $\bar y$ commutes with the product $\prod_iy_i$. In other words we have
$(\prod_iy_i)^{\bar y}=(\prod_iy_i)$ so $\nu^{\bar y}=\nu \in Z$.

Next, we consider the case in which $z^{\bar y} \neq z$. If we let  $\bar y_1,\ldots,\bar y_s$ be all the elements in $Y$ which are the preimage under $s$ of $\Omega^{\bar v}_P$, we have
 $z^{\bar y}=z^{\prod_i\bar y_i}$ and also $(\prod_iy_i)^{\bar y}=(\prod_iy_i)^{\prod_i\bar y_i}$ thus
\[\nu^{\bar y}=\nu^{\prod_i\bar y_i}=\nu^{\bar z^{-1}\prod_i\bar y_i} = \nu^{\bar \nu} \in Z,\]
where  $\bar z\in\hat H$ is such that $s(z^v)=S^v\in\Omega_H^v$ and $\bar\nu= \bar z^{-1}\prod_i\bar y_i \in Z$. So $Z$ is a normal subgroup of $\hat{G}$.

Now, observe that with the same notation as in the previous paragraph, the fact that $\hat H$ is centralized by $Z$ implies that
\[\nu\bar\nu=(\prod _iy_i)z^{-1}(\prod_i\bar y_i)\bar z^{-1}=(\prod _iy_i\prod_i\bar y_i)(z^{-1}\bar{z}^{-1}).\]
Iterating, we see that elements in $Z$ can be written as pairs $ab$ where $a$ is in the subgroup of $\hat P$ generated by the $\prod_iy_i$'s and $b$ in the subgroup $L$ of $\hat H$ generated by the $z$'s. Therefore elements in $Z\cap\hat H$ lie in $L\cap Z(\hat H)\leq Z(L)$. We claim that $L$ is isomorphic to the right angled Artin group $A_\Delta$. Using the 
 hypothesis that no vertex of $\Delta$ is linked to all the other vertices of $\Delta$, we will deduce that $1=Z(A_\Delta)=Z(L)$ so the claim implies the result.

The key observation to show this is that given $u,v\in\Delta$, the elements $z^v$ and $z^u$ commute if and only if $u$ and $v$ are linked, because both are the dominant components in the $\PAut$-like structure of $\hat H$. Moreover, there is no mixed relator $[ab,c]$ with $c=z^u$ and $a$ or $b$ equal to $z^v$.

Consequently we can define a map  $X\to A_\Delta$ given by $z=z^v\mapsto v$ if $v\in\Delta$ and $s(z)=S^v\in\Omega_H^v$ and $x\mapsto 1$ otherwise.
This map extends to a group homomorphism $\hat H\to A_\Delta$. Conversely, we can define a map $\Delta\to X$ given by $v\mapsto z^v$ that extends to a group homomorphism $A_\Delta\to\hat H$ whose image is precisely $L$. Composing both maps yields the identity in $A_\Delta$ so we deduce $L\cong A_\Delta$.
\end{proof}

\begin{remark}\label{rem:Z} It follows from the previous proof that we have an isomorphism of short exact sequences
\[\begin{tikzcd}
1\arrow[r]&Z\hat H/\hat H\arrow[r] \arrow[d] &\hat G/\hat H\arrow[r] \arrow[d]  &\hat G/Z\hat H\arrow[r] \arrow[d]  &1\\
1\arrow[r]&\Inn(A_\Delta)\arrow[r]&P\arrow[r] &Q\arrow[r] & 1.
\end{tikzcd}\]
\end{remark}

\begin{theorem} There is an isomorphism $\hat G/Z\to G$ that induces isomorphisms $\hat H\to H$ and $\hat Q\to\hat G/Z\hat H\to Q$. Moreover $\hat Q\cong\hat G/Z\cong P/\Inn(A_\Gamma)$ where $P$ is the subgroup of $G$ generated by the partial conjugations based at $\Omega_P^v,v\in\Delta$. 
\end{theorem}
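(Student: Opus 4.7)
My approach is to construct a homomorphism $\Phi\colon\hat G\to G$ directly from the presentations of $\hat H$ and $\hat P$, show that its kernel is exactly $Z$, and then deduce the remaining isomorphisms by diagram-chasing against Remark~\ref{rem:Z}. Since $G$ is $\PAut$-like by inductive hypothesis, each standard generator $c^w_A$ with $A\in\Omega^w_H$ and each $c^v_B$ with $B\in\Omega^v_P$ is an actual partial conjugation in $G$, and the defining relators of $\hat H$ and $\hat P$ are among the universal relations of Lemma~\ref{lem:relators}. Hence $x^w\mapsto c^w_{s(x)}$ and $y^v\mapsto c^v_{s(y)}$ extend to homomorphisms $\Phi_H\colon\hat H\to G$ and $\Phi_P\colon\hat P\to G$. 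To combine these into a homomorphism out of the semidirect product $\hat G=\hat P\ltimes_\xi\hat H$, I need to check that $\xi_y(x)$ corresponds to the conjugation of $\Phi_H(x)$ by $\Phi_P(y)$ in $G$; this is precisely the lemma preceding the definition \eqref{def.action.P.H} of $\xi$, which shows that when $h^g\neq h$ we have $h^g=h^{gg_1}$ with $gg_1=c^v_{S^v}$, matching $\xi_y(x)=x^z$ with $s(z)=S^v$.

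\textbf{Surjectivity and the containment $Z\subseteq\ker\Phi$.} Each generator $(\prod_i y_i)z^{-1}$ of $Z$ maps to $c^v_{S^v}\cdot(c^v_{S^v})^{-1}=1$, so $\Phi$ factors through $\bar\Phi\colon\hat G/Z\to G$. For surjectivity, Lemma~\ref{lem:generating} shows $G$ is generated by its partial conjugations, equivalently by the standard generators $c^v_A$ with $A\in\Omega^v_G$. For $v\notin\Delta$, we have $\Omega^v_G=\Omega^v_H$; for $v\in\Delta$, each $A\in\Omega^v_G$ either meets $\Delta$ (so $A\in\Omega^v_P$) or does not (so $A\in\Omega^v_H$). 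In either case $c^v_A\in\img\Phi$, so $\bar\Phi$ is surjective.

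\textbf{Injectivity (the main obstacle).} The heart of the proof is to show $\ker\Phi=Z$. First, I would establish that $\Phi_H$ is an isomorphism onto $H$: using Lemma~\ref{DayWade2.2} one checks that the partition $\{\Omega^w_H\}$ correctly captures the partial conjugations lying in $H$ (distinguishing vertices of $\Delta$ from those outside by Lemma~\ref{lem:condH}), that these generate $H$, and that all relations among them are instances of Lemma~\ref{lem:relators} — that is, $H$ is itself $\PAut$-like with presentation identical to that of $\hat H$. Given $\eta=yx\in\ker\Phi$ with $y\in\hat P$ and $x\in\hat H$, we have $\Phi_P(y)=\Phi_H(x)^{-1}\in H$. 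Under the identification of $\hat G/\hat H$ with $P$ from Remark~\ref{rem:Z}, the subgroup $H\cap P\leq G$ corresponds to $\Inn(A_\Delta)$, whose generators are exactly the images of the generators of $Z$ (products $c^v_{S^v}$ with $v\in\Delta$). Pulling back along $\Phi_P$ and combining with $Z\cap\hat H=1$ from Lemma~\ref{lem:Z} and the injectivity of $\Phi_H$ forces $\eta\in Z$. The technical care here lies in matching products of $\hat P$-generators with the inner automorphisms in $G$ via the $\PAut$-like bookkeeping set up in Section~\ref{sec:presentation}.

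\textbf{Conclusion.} Once $\bar\Phi\colon\hat G/Z\xrightarrow{\cong}G$ is in hand, the fact that $Z\cap\hat H=1$ gives an isomorphism $\hat H\to H$ by restriction, and passing to the quotient by $H$ produces $\hat G/Z\hat H\xrightarrow{\cong}G/H\cong Q$. These two isomorphisms together with the diagram of Remark~\ref{rem:Z} identify $\hat G/Z\hat H$ with $P/\Inn(A_\Delta)$ and deliver the ``moreover'' clause. The whole argument is effectively a bridge between the abstract presentation built by hand in $\hat G$ and the concrete Day--Wade short exact sequence for $G$, with the injectivity step being the only non-formal part.
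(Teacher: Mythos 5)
Your construction of $\Phi\colon\hat G\to G$ and the observations that $Z\subseteq\ker\Phi$ and $\Phi$ is onto match the first half of the paper's argument. The gap is in the injectivity step. You write that you would ``establish that $\Phi_H$ is an isomorphism onto $H$'' by checking that the generators $c^w_T$ with $T\in\Omega^w_H$ generate $H$ and that ``all relations among them are instances of Lemma~\ref{lem:relators}''. But the latter claim is not something that Lemmas~\ref{DayWade2.2} or~\ref{lem:condH} give you: they only tell you \emph{which} partial conjugations lie in $H$, not that the obvious relations between them form a complete set of defining relators. Proving that $H$ is $\PAut$-like with the presentation of $\hat H$ is precisely (part of) the content of the theorem you are trying to prove, so your injectivity step is circular. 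There is also a secondary issue: the identification ``$H\cap P\cong\Inn(A_\Delta)$'' is asserted but not derived from anything available at this point of the argument, and it is not obvious.

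The paper avoids this circularity by never trying to prove $\hat H\cong H$ in isolation. Instead, it constructs the inverse map $\iota\colon G\to\hat G/Z$ explicitly, sending $c^v_A\mapsto x^v$ with $s(x^v)=A$, after first adding to the $\PAut$-like presentation of $G$ the auxiliary generators $c^v_{S^v}$ and the relators $c^v_{S^v}=\prod\{c^v_A\mid A\in\Omega^v_G,\ A\cap\Delta\neq\emptyset\}$. It then verifies case by case that $\iota$ sends every defining relator of $G$ (which are available precisely because $G$ is $\PAut$-like by the inductive hypothesis) to a relator of $\hat G/Z$, using the action identities recorded in Remark~\ref{rem:action}. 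This two-sided check gives the isomorphism $\hat G/Z\cong G$ directly, and the statements about $\hat H\cong H$ and $\hat G/Z\hat H\cong Q$ then fall out afterward rather than being needed as inputs. To repair your proof you would need to replace the ``$H$ is $\PAut$-like'' claim with an argument along these lines, i.e.\ one that only uses the known presentation of $G$ rather than assuming anything about $H$'s presentation.
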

\begin{proof} Recall that we are assuming that $G$ is $\PAut$-like. This yields a presentation where the standard generators are of the form $c^w_W$ for $W\in\Omega^w_G$ and $w\in\Gamma$. We can modify this presentation adding:

\begin{itemize}
\item[i)] generators of the form $c^v_{S^v}$ for each $v\in\Delta$ where recall that $S^v\in\Omega^v_H$ is the union of the components of $\Omega^v_H$ that meet $\Delta$ and 
\item[ii)] relators $c^v_{S^v}=\prod\{c^v_A\mid A\in\Omega^v_G,A\cap\Delta\neq\emptyset\}$.
\end{itemize}
Consider the map
$a\mapsto c^u_{s(a)}$ where $a\in X\cup Y$ is a standard generator of $\hat G$. As all the relators of $\hat P$ and $\hat H$ hold in $G$ and the action of $\hat P$ on $\hat H$ was defined using  identities of $G$, we deduce that this maps preserves all the relators of $\hat G$ so it induces a well defined group homomorphism $\hat G\to G$. This map obviously factors through the projection $\hat G\to \hat G/Z$ so we have a group homomorphism $\hat G/Z\to G$. 

For the converse, we will check that the map
\[
\iota \colon c^v_A\mapsto
x^v\text{ with }s(x^v)=A\]
defined on the standard generators of $G$ can be extended to a group homomorphism, i.e., we have to check that the relators are preserved. Let
\[Y_G=\{c^v_A\mid v\in\Delta,A\cap\Delta\neq\emptyset\},\]
\[X_G=\{c^v_A\mid v\in\Delta,A\cap\Delta=\emptyset\}\cup\{c^w_A\mid w\in\Gamma\setminus\Delta\}.\]
Then $\iota$ send elements of $Y_G$ (resp. $X_G$) to $Y$ (resp. $X$). And relators of the form $[a,b]$ where $a$ and $b$ are single standard generators are sent to relators of $\hat P$ if both $a,b\in Y_G$, to relators of $\hat H$ if both $a,b\in X_G$ and to relators describing the action of $\hat P$ on $\hat H$ if $a\in Y_G$ and $b\in X_G$ or the other way around. 
 Relators of the form $[a,b]$ where both are single standard generators lie either in $\hat P$ or in $\hat H$ or are given by the action. That is to say, in all these cases the relator is preserved. The same thing happens for relators $[ab,c]$ where either all of $a,b,c$ lie in $Y_G$ or all them lie in $X_G$.

So we only have to check what happens with relators $[ab,c]$ where elements of $Y_G$ and $X_G$ are mixed. Put $c=c^v_T$. The $\PAut$-like hypothesis implies that one of $a,b$, say $a$, must be dominant for $v$, i.e. $a=c^w_A$ with $v\in A$ and $b$ must be shared, i.e., $b=c^w_T$. We claim first that if $c\in Y_G$, then also $a,b$ must be in $Y_G$.  As $c\in Y_G$, $v\in\Delta$ and $T\cap\Delta\neq\emptyset$. Then we see that $A$ and $T$ are two different components of $\Omega^w_G$ that meet $\Delta$. This is impossible if   $w\not\in\Delta$,  because $\Delta$ is $G$-invariant. So $w\in\Delta$ and $a,b\in Y_G$.

Therefore we may assume that $c\in X_G$. Assume that $b$ lies in $Y_G$ so $w\in\Delta$. Then $T\cap\Delta\neq\emptyset$ so we deduce that $v\not\in\Delta$. But again this is impossible because it would imply that at least $T$ and the component of $\Omega_G^v$ that contains $w$ meet $\Delta$. Therefore either both $a,b$ lie in $X_G$ or $a\in Y_G$. In the first case we are done.  And in the second, the relator $[ab,c]$ can be rewritten as  $(c^a)^bc^{-1}$ and applying $\iota$ we get a relator of $\hat G$ because of Remark \ref{rem:action} iii).
\end{proof}

\begin{corollary} Let $G=\PAut(A_\Gamma,\mathcal{G},\mathcal{H}^t)$ be a group that is obtained in a Day-Wade series starting from $\PAut(A_{\Gamma_1})$, where $\Gamma$ is a subgraph of $\Gamma_1$. Then $G$ is $\PAut$-like and $\overline{G}=G/\Inn(A_\Gamma)$ is $\POut$-like.
\end{corollary}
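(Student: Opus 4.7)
My plan is to prove the corollary by induction on the number of Case~1 extractions (in the sense of the proof of Theorem~\ref{teoC}) needed to reach $G$ from the initial group $\PAut(A_{\Gamma_1})$. The essential content — that if a group is $\PAut$-like then so are both of its Day--Wade children — is already the substance of the main theorem of this section, so what remains is a bookkeeping argument iterating that theorem along the Day--Wade recursion.

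I would first check the base case: $\PAut(A_{\Gamma_1})$ itself is $\PAut$-like, with the partitions $\Omega^v_{\PAut(A_{\Gamma_1})}$ equal to the sets of connected components of $\Gamma_1 \smallsetminus \st_{\Gamma_1}(v)$. Condition~(ii) of Definition~\ref{def:PAut} is simply the dominant/shared/subordinate trichotomy recalled in the introduction, and Laurence and Toinet's standard presentation \cite{Toinet} coincides exactly with the collection $\bigcup_{v,w} R^{v,w}_{\PAut(A_{\Gamma_1})}$ of standard relators.

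For the inductive step I would assume $G = \PAut(A_\Gamma, \mathcal{G}, \mathcal{H}^t)$ is $\PAut$-like and consider a Day--Wade short exact sequence
\[1 \to H \to G \to Q \to 1\]
attached to some $A_\Delta \in \mathcal{G}$ on which $G$ does not act trivially, where $H = \PAut(A_\Gamma, \mathcal{G}, (\mathcal{H}\cup\{A_\Delta\})^t)$ and $Q = \POut(A_\Delta, \mathcal{G}_\Delta, \mathcal{H}_\Delta^t)$. The preceding theorem constructs the abstract $\PAut$-like groups $\hat H$ and $\hat P$, assembles them as $\hat G = \hat P \ltimes_\xi \hat H$, and produces an isomorphism $\hat G/Z \xrightarrow{\sim} G$ that restricts to $\hat H \xrightarrow{\sim} H$ and, via Remark~\ref{rem:Z}, identifies $\hat P$ with the lift $P \subseteq G$ of $Q$. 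Pulling back the built-in $\PAut$-like presentations of $\hat H$ and $\hat P$ along these isomorphisms yields $\PAut$-like structures on $H$ and $P$, whence $Q = P/\Inn(A_\Delta)$ is $\POut$-like directly from the remark following Definition~\ref{def:PAut}.

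Since every group appearing in a Day--Wade series is reached from $\PAut(A_{\Gamma_1})$ by a finite sequence of such Case~1 operations — the Case~2 configurations of Theorem~\ref{teoC} are exactly the points where no further Case~1 step applies, but they still arise through the same recursion — this finishes the induction and gives $\PAut$-likeness of $G$; the $\POut$-likeness of $\overline{G} = G/\Inn(A_\Gamma)$ then follows straight from the definition. I do not expect any new obstacle here: the delicate point, that the abstract semidirect product $\hat P \ltimes_\xi \hat H$ really models $G$ modulo the explicit normal subgroup $Z$, was already the content of the preceding theorem, and the corollary merely transports that statement along the recursion.
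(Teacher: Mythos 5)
Your proposal is correct and follows essentially the same route as the paper. The paper itself gives no separate proof of this corollary: its content is precisely the induction you spell out, with the base case that $\PAut(A_{\Gamma_1})$ carries the trivially $\PAut$-like structure given by Toinet's presentation, and the inductive step supplied by the preceding theorem of Section~\ref{sec:presentation} (the isomorphism $\hat G/Z \to G$ carrying the built-in $\PAut$-like structures on $\hat H$ and $\hat P$ onto $H$ and $P$, and hence onto $Q = P/\Inn(A_\Delta)$ via Remark~\ref{rem:Z}). The only point I would flag as imprecise, though it affects the paper's own implicit argument equally, is your aside about the Case~2 configurations: subcase 5.1.4 of the proof of Theorem~\ref{teoC} is not merely a terminal node where no Case~1 step applies; it replaces $\Gamma$ by $\Delta = \Gamma \smallsetminus Z$ via a different isomorphism (from \cite[Proposition~5.6]{DayWade}), and one must observe that this isomorphism also transports the $\PAut$-like structure appropriately. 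That observation is straightforward but is not covered by the Case~1 theorem, and neither you nor the paper address it explicitly.
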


\section{Some exact sequences of Lie algebras}

Here we explain why the group decompositions also work on the level of their graded Lie algebras. The first step is to show that the map $\PAut(A_{\Gamma}) \to \POut(A_\Gamma)$ induces a short exact sequence
\[ 0\to \gr(\Inn(A_\Gamma))\to\gr(\PAut(A_\Gamma))\to\gr(\POut(A_{\Gamma}))\to 0\]
of graded Lie algebras. Notice that $\Inn(A_\Gamma)$ is isomorphic to the RAAG defined on the subgraph $\Gamma_0 \subseteq \Gamma$ determined by the vertices which are not connected to every other vertex, so $\gr(\Inn(A_\Gamma))$ is actually a right-angled Artin Lie algebra. We will work with the presentations for $\gr(\PAut(A_\Gamma))$ and $\gr(\POut(A_\Gamma))$ given in Theorem~\ref{teoB}, in which generators are in bijection with the set of partial conjugations $\cup_v \Omega_G^v$, for $G = \PAut(A_\Gamma)$.

For $\al = \gr(A_{\Gamma_0})$, define $\varphi \colon \al \to \gr(\PAut(A_\Gamma))$ to be the homomorphism sending
$v$ to the sum $\sum c_L^v$ running through all $L \in \Omega_G^v$. Here we are thinking of $v \in \Gamma_0 \subseteq \Gamma$ as a generator of $\al$. Notice that $\im(\varphi)$ is an ideal of $\gr(\PAut(A_\Gamma))$. Indeed, for any generator $v \in \al$ and $c_M^w \in \gr(\PAut(A_\Gamma))$, either $[\varphi(v), c_M^w]=0$, or $[\varphi(v), c_M^w]= [\varphi(v), \varphi(w)]$, so in any case we get an element of $\im(\varphi)$.

It is clear from the presentations of  $\gr(\PAut(A_\Gamma))$ and $\gr(\POut(A_\Gamma))$ that $\im(\varphi)$ is equal to the kernel of $\gr(\PAut(A_\Gamma))\to\gr(\POut(A_{\Gamma}))$, so it remains to show that $\varphi$ is injective. For that we adapt the proof of \cite[Proposition~3.2]{ConnerGoetz}.
The idea is to define a homomorphism $\psi \colon \gr(\PAut(A_\Gamma)) \to \Der(\al)$ that makes the following diagram commutative:
\begin{equation}  \label{diagram.psi.ad}
\xymatrix{ \gr(\PAut(A_\Gamma)) \ar[r]^{\psi} & \Der(\al) \\
 \al   \ar[u]^{\varphi} \ar[ur]_{\ad} & }
\end{equation}
Notice that $\al$ has trivial center, so $\ad: \al \to \Der(\al)$ is injective. Therefore, if we can define such a $\psi$, we get from the commutativity of the diagram that $\varphi$ is injective.

We define $\psi$ as
\[
\psi(c_L^w) ( v ) =
     \begin{cases}
       [w,v]  &\quad\text{if } v \in L,\\
       0  &\quad\text{otherwise,} \\
     \end{cases}
\]
and extend it by the Leibniz rule. There are many things to verify. We will use the following standard result.

\begin{lemma} \label{defder}
 Let $F$ be a free Lie algebra on the set $X$ and let $R$ be the ideal of $F$ generated by some elements $r_1, \ldots, r_n \in F$.
 Then:
 \begin{enumerate}
  \item Any function $\delta \colon X \to F$ extends to a derivation $\Delta: F \to F$;
  \item If $\Delta(r_i) \in R$ for all $i$, then $\Delta(R) \subseteq R$ and the induced map $\bar{\Delta} \colon F/R \to F/R$ is a derivation.
 \end{enumerate}
\end{lemma}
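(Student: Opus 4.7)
The plan is to prove both parts by exploiting the universal property of the free Lie algebra $F$ on $X$.

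For part (1), my approach is the standard semidirect product trick. I would form the Lie algebra $L = F \ltimes F_{\rm ab}$, where the second copy of $F$ is treated as an abelian Lie algebra on which the first copy acts via the adjoint representation; concretely, the bracket is $[(a,b),(a',b')] = ([a,a'],\, [a,b'] - [a',b])$. Given $\delta \colon X \to F$, the set map $X \to L$, $x \mapsto (x,\delta(x))$ extends by the universal property of $F$ to a Lie algebra homomorphism $\Phi \colon F \to L$. Projecting to the first coordinate is the identity on $X$ (and hence on all of $F$), so $\Phi(a) = (a, \Delta(a))$ for a unique function $\Delta \colon F \to F$. The fact that $\Phi$ preserves brackets unfolds to the identity $\Delta([a,a']) = [a,\Delta(a')] + [\Delta(a), a']$, which is exactly the Leibniz rule. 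By construction $\Delta|_X = \delta$.

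For part (2), write $R$ as the ideal generated by $r_1,\ldots,r_n$. Every element of $R$ is a finite sum of iterated brackets of the form $[a_k, r_{i_k}]$ with $a_k \in F$, so I would argue by induction on bracket depth. The Leibniz rule gives
\[
\Delta\bigl([a, r_i]\bigr) = [\Delta(a), r_i] + [a, \Delta(r_i)],
\]
and both summands lie in $R$: the first because $r_i \in R$ and $R$ is an ideal, the second by the hypothesis $\Delta(r_i) \in R$ combined again with the ideal property. The inductive step is identical. Hence $\Delta(R) \subseteq R$, and so $\Delta$ descends to a well-defined linear map $\bar{\Delta} \colon F/R \to F/R$. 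Since the Leibniz rule is an identity of brackets, it passes to the quotient, confirming that $\bar{\Delta}$ is a derivation of $F/R$.

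There is no real obstacle here; the statement is a standard piece of folklore about free Lie algebras. The only subtle point is the verification in part (1) that the Leibniz rule really is equivalent to $\Phi$ being a Lie homomorphism into the semidirect product, which is why this construction is preferable to a direct inductive definition on bracket length (where one would otherwise have to check compatibility with the Jacobi identity by hand).
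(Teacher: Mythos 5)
Your proof is correct. The paper itself offers no proof of this lemma (it is labelled merely as ``the following standard result''), so there is no argument to compare against; what you have supplied is a complete and correct justification. The semidirect-product construction for part (1) is the standard device: $L = F \ltimes F_{\mathrm{ab}}$ with bracket $[(a,b),(a',b')] = ([a,a'],\,[a,b']-[a',b])$ is indeed a Lie algebra, the projection $\pi_1\colon L \to F$ is a Lie homomorphism, and the universal property forces $\pi_1\circ\Phi = \mathrm{id}_F$, so that the second coordinate $\Delta$ of $\Phi$ is $k$-linear and the bracket-preservation identity unpacks exactly to the Leibniz rule. For part (2), the induction on bracket depth works; a small stylistic alternative you might prefer is to observe that $\{\,a \in R : \Delta(a) \in R\,\}$ is a subspace of $R$ closed under bracketing with $F$ (by Leibniz, using that $R$ is an ideal), hence is itself an ideal containing $r_1,\dots,r_n$ and therefore equals $R$; this avoids spelling out the iterated-bracket description of $R$. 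Either way the conclusion $\Delta(R)\subseteq R$ follows, and the induced $\bar\Delta$ is a derivation because the Leibniz identity passes to the quotient.
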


\begin{lemma}
$\psi(c_L^w)$ is a well defined derivation of $\al$ for all $L$ and $w$.
\end{lemma}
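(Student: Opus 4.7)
The plan is to invoke Lemma~\ref{defder}. Present $\al = \gr(A_{\Gamma_0})$ as a quotient $F/R$ of the free Lie algebra $F$ on the vertex set of $\Gamma_0$ by the ideal $R$ generated by the brackets $[v_1,v_2]$ for pairs of adjacent vertices in $\Gamma_0$. The assignment $v \mapsto \psi(c_L^w)(v)$, given by $[w,v]$ if $v \in L$ and $0$ otherwise, extends by part~(1) of Lemma~\ref{defder} to a derivation $\Psi$ of $F$. To descend $\Psi$ to $\al$ via part~(2), it suffices to verify $\Psi([v_1,v_2]) \in R$ for every relator $[v_1,v_2]$, i.e., every pair of vertices $v_1,v_2$ adjacent in $\Gamma_0$. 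Expanding via Leibniz,
\[\Psi([v_1,v_2]) = [\Psi(v_1), v_2] + [v_1, \Psi(v_2)],\]
and the verification proceeds by case analysis on the positions of $v_1, v_2$ relative to $L$.

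The case $v_1, v_2 \notin L$ is trivial. If both $v_1, v_2 \in L$, the Jacobi identity together with $[v_1,v_2] = 0$ in $\al$ yields
\[[[w,v_1], v_2] + [v_1, [w,v_2]] = -[w, [v_1,v_2]] = 0\]
in $\al$, so the image lies in $R$.

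The only interesting case is the mixed one: say $v_1 \in L$ and $v_2 \notin L$. Here the key point is that $L$ is a connected component of $\Gamma \smallsetminus \st(w)$, so any vertex adjacent to some element of $L$ and lying outside $L$ must lie in $\st(w)$. Hence $v_2 \in \st(w)$, meaning either $v_2 = w$ or $v_2$ is adjacent to $w$ in $\Gamma$ (and hence in $\Gamma_0$, as $\Gamma_0$ is a full subgraph containing both). In the first case $[w, v_1] = -[v_1, v_2] = 0$ in $\al$, so $\Psi([v_1,v_2]) = [[w,v_1],v_2] = 0$. In the second, $[w,v_2] = 0$ in $\al$, and a Jacobi identity
\[[[w,v_1],v_2] = -[[v_2,w],v_1] - [[v_1,v_2],w] = 0\]
again gives the vanishing modulo $R$. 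The symmetric subcase $v_2 \in L$, $v_1 \notin L$ is identical.

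I expect the mixed case to be the only step requiring any thought; all the rest is either formal (Lemma~\ref{defder}) or an immediate application of Jacobi. The crucial input is the topological property of $L$ being a \emph{full} connected component of the complement of $\st(w)$, which forces the escape condition $v_2 \in \st(w)$ and thereby allows Jacobi to close the computation.
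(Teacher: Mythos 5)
Your proof is correct and follows essentially the same route as the paper: present $\al$ as $F/R$, invoke Lemma~\ref{defder}, and check the three cases for $v_1,v_2$ relative to $L$ via the Jacobi identity, with the mixed case pivoting on $v_2\in\st(w)$. Two small remarks: in the case $v_1,v_2\in L$ you have a sign error (the Leibniz identity gives $[[w,v_1],v_2]+[v_1,[w,v_2]]=[w,[v_1,v_2]]$, not $-[w,[v_1,v_2]]$, though this is harmless since either sign lands in $R$); and the subcase $v_2=w$ in your mixed-case analysis is vacuous, since $v_1\in L\subseteq\Gamma\smallsetminus\st(w)$ forbids $v_1$ from being adjacent to $w$.
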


\begin{proof}
Clearly we only need to work with $w \in \Gamma_0$.
By Lemma \ref{defder} it is enough to show that if $v_1, v_2 \in \Gamma_0$ are adjacent, then the element
\begin{equation} \label{eqverif}
[\psi(c_L^w)v_1, v_2] +  [v_1,\psi(c_L^w) v_2]
 \end{equation}
is a consequence of the defining relators for $\al$.

If  $v_1, v_2 \notin L$, then \eqref{eqverif} reduces to $0$. Similarly, if $v_1, v_2 \in L$, then:
\[[\psi(c_L^w)v_1, v_2] +  [v_1,\psi(c_L^w) v_2] = [[w,v_1],v_2]+[v_1,[w,v_2]] = [w,[v_1,v_2]]\]
by the Jacobi identity. Since $[v_1,v_2]$ is a defining relator of $\al$, we are done in this case.

Finally, suppose that $v_1 \in L$ but $v_2 \notin L$. In this case, since $L$ is a connected component of $\Gamma \smallsetminus \st(w)$, we have $v_2 \in \st(w)$. Then
\[[\psi(c_L^w)v_1, v_2] +  [v_1,\psi(c_L^w) v_2] = [[w,v_1],v_2] = [[w,v_2],v_1]+[w,[v_1,v_2]].\]
Since $[w,v_2]$ and $[v_1,v_2]$ are defining relators, we are done. The case where $v_1 \notin L$ and
$v_2 \in L$ is completely analogous.
\end{proof}

\begin{lemma}
 $\psi \colon  \gr(\PAut(A_\Gamma)) \to \Der(\al)$ is a homomorphism of Lie algebras.
\end{lemma}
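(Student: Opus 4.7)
The strategy is to show that $\psi$, currently defined on generators, factors through the presentation of $\gr(\PAut(A_\Gamma))$ from Theorem~\ref{teoB}. The definition extends uniquely to a Lie algebra map from the free Lie algebra on the $c_L^v$ into $\Der(\al)$, so it suffices to check that each defining relator is sent to the zero derivation. Because $\al$ has trivial center (each vertex of $\Gamma_0$ has a non-adjacent vertex in $\Gamma_0$), two derivations agree as soon as they agree on the vertex generators, so for each relator $r$ and each $u \in \Gamma_0$ I would verify $\psi(r)(u) = 0$ in $\al$.

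For a commutator relator $[c_L^v, c_K^w]$, expanding $[\psi(c_L^v), \psi(c_K^w)](u)$ via the Leibniz rule produces a combination of terms of shape $[[v,w], u]$, $[v, [w,u]]$, and $[w, [v,u]]$, whose nonzero contributions are governed by whether each of $u, v, w$ belongs to $L$ or $K$. When $v \in \st(w)$, the relation $[v,w] = 0$ in $\al$ together with the Jacobi identity force the whole expression to vanish. When one of $L, K$ is subordinate, or when $L, K$ are distinct shared components, the combinatorial definitions imply disjointness/non-membership properties such as $v \notin K$, $w \notin L$, and $L \cap K = \emptyset$, which successively eliminate each surviving term.

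The decisive verification is for the type-(2) relator $[c_L^v, c_L^w + c_K^w]$, with $L$ shared and $K$ dominant for $w$. Here $v, w \notin L$, $v \in K$, and $L \cap K = \emptyset$ because $L \ne K$ are distinct components of $\Gamma \smallsetminus \st(w)$. The only nontrivial subcase is $u \in L$; a careful Leibniz expansion collects the surviving terms into
\[ [w, [v,u]] - [v, [w,u]] - [[w,v], u], \]
which is precisely a Jacobi relation in $\al$, hence zero. If $u \in K \smallsetminus L$ or $u \notin L \cup K$, then either $\psi(c_L^v)(u) = 0$ or $\psi(c_K^w)(u) = 0$ along with the other vanishing conditions reduce the computation to an immediate zero.

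The main obstacle is the systematic bookkeeping across the many subcases: for each relator one must simultaneously track whether $v$, $w$, and $u$ lie in the specified components, and then apply the correct Jacobi identity. The combinatorial hypotheses on shared/subordinate/dominant components embedded in each relator are exactly what supply the adjacency and disjointness conditions needed for these cancellations.
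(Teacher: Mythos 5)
Your overall strategy is the paper's: extend $\psi$ to a derivation of the free Lie algebra, check that each defining relator of $\gr(\PAut(A_\Gamma))$ is sent to a derivation vanishing on every generator of $\al$, and use that $\al$ has trivial center. Your type-(2) verification for $u\in L$, culminating in the Jacobi combination $[w,[v,u]]-[v,[w,u]]-[[w,v],u]=0$, is correct.

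There is, however, a gap in your treatment of the type-(1) relator in the subordinate case. You assert that subordinacy (or distinct shared components) gives properties ``such as $v \notin K$, $w \notin L$, and $L \cap K = \emptyset$, which successively eliminate each surviving term.'' This fails when $L$ (a component of $\Gamma\smallsetminus\st(v)$) is subordinate and $K$ is the dominant component $D_v$ of $\Gamma\smallsetminus\st(w)$ that contains $v$: then $L\subseteq K$ and $v\in K$, so $L\cap K\neq\emptyset$ and $v\in K$, and for $u\in L$ the Leibniz expansion produces the same three nonzero terms $[w,[v,u]]$, $-[[w,v],u]$, $-[v,[w,u]]$, which only cancel via the Jacobi identity rather than term-by-term. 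The relator $[c_L^v,c_K^w]$ with $K$ dominant and $L$ subordinate is in the presentation, so this case must be covered. The paper avoids enumerating it explicitly by first establishing the closed form
\[
[\psi(c_L^{w_1}), \psi(c_M^{w_2})](v) = (\delta_M^v \delta_L^{w_2} + \delta_L^v \delta_M^{w_1} - \delta_M^v \delta_L^v)\,[[w_1,w_2],v],
\]
where $\delta$ is the membership indicator; the Jacobi identity is already absorbed in deriving this formula, and every relator type then reduces to a short check that the scalar coefficient vanishes (or that two such coefficients agree, for type (2)). Your argument is repairable by simply invoking Jacobi in the $L\subseteq K$, $v\in K$ subcase, but as stated the claim that disjointness alone kills each term is wrong there.
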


  \begin{proof}
  We need to show that the relations of $\gr(\PAut(A_\Gamma))$ are respected. For any
  vertex $w \in \Gamma$ and $L \subseteq \Gamma$ a subgraph, let
  \[
\delta_L^w =
     \begin{cases}
       1  &\quad\text{if } w \in L,\\
       0  &\quad\text{otherwise.} \\
     \end{cases}
\]
Then for any generator $c_L^w \in \gr(\PAut(A_\Gamma))$ and $v \in \Gamma_0$, we have \[\psi ( c_L^w) (v) = \delta_L^v [w, v].\]
 For any generators $c_L^{w_1}, c_M^{w_2} \in \gr(\PAut(A_\Gamma))$ and $v \in \Gamma_0$, using the Jacobi identity and the Leibniz rule for derivations, we have:
 \begin{equation} \label{a1}
  [\psi(c_L^{w_1}), \psi(c_M^{w_2})] (v) = (\delta_M^v \delta_L^{w_2} + \delta_L^v \delta_M^{w_1} - \delta_M^v \delta_L^v)[[w_1,w_2],v].
 \end{equation}

 Now suppose that $[c_L^{w_1}, c_M^{w_2}]=0$ is a defining relation in $\gr(\PAut(A_\Gamma))$. So we are in one of the following cases:
 \begin{enumerate}
  \item[(i)] $w_1 \in \st(w_2)$;
  \item[(ii)] $(L \cup \{w_1\}) \cap (M \cup \{w_2\}) = \emptyset$;
  \item[(iii)] $L \cup \{w_1\} \subseteq M$;
  \item[(iv)] $M \cup \{w_2\} \subseteq L$.
 \end{enumerate}
 It is clear that \eqref{a1} is $0$ in case (i). In case (ii) we have $\delta_L^{w_2} = \delta_M^{w_1} = \delta_L^v \cdot \delta_M^v = 0$, thus clearly \eqref{a1} reduces to $0$ again. In case
 (iii) we have $\delta_L^{w_2}=0$ and $\delta_M^{w_1}=1$, so the coefficient on the right hand side of \eqref{a1} is
 $\delta_L^v (1- \delta_M^v)$, which is zero whenever $v \in M$ or $v \in \Gamma \smallsetminus M \subset \Gamma \smallsetminus L$. Case (iv) is completely analogous to case (iii).

 Now we consider a relation of type $[c_L^{w_1}, c_L^{w_2}+c_M^{w_2}]$, with $w_1 \in M$. We need to show that $A =[\psi(c_L^{w_1}), \psi(c_M^{w_2})](v)$
 and $B = -[\psi(c_L^{w_1}), \psi(c_L^{w_2})](v)$ coincide for all $v \in \Gamma_0$. First, since $L$ is a shared component, we have $\delta_L^{w_1} = \delta_L^{w_2} = 0$, so
 \[ B = -(\delta_L^v \delta_L^{w_2} + \delta_L^v \delta_L^{w_1} - \delta_L^v \delta_L^v)[[w_1,w_2],v] = -(-\delta_L^v \delta_L^v )[[w_1,w_2],v] = \delta_L^v[[w_1,w_2],v].\]
  On the other hand, since $L$ and $M$ are disjoint, $\delta_L^v \cdot \delta_M^v = 0$. So
\[ A = (\delta_M^v \delta_L^{w_2} + \delta_L^v \delta_M^{w_1} - \delta_M^v \delta_L^v)[[w_1,w_2],v] =\delta_L^v[[w_1,w_2],v],\]
as $\delta_M^{w_1} = 1$. Thus $A$ = $B$.
 \end{proof}

\begin{prop}  \label{grInnPAut}
For any finite graph $\Gamma$, there is an exact sequence
\[ 0\to \gr(\Inn(A_\Gamma))\to\gr(\PAut(A_\Gamma))\to\gr(\POut(A_{\Gamma}))\to 0\]
induced by the group homomorphism $\PAut(A_\Gamma) \to \POut(A_\Gamma)$.
\end{prop}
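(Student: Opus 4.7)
The plan is to leverage the structure already set up above. The image of $\varphi$ has been identified with the kernel of $\gr(\PAut(A_\Gamma))\to\gr(\POut(A_\Gamma))$, and surjectivity of that quotient is immediate from the presentations recorded in Theorem~\ref{teoB}. Consequently, the only missing ingredient is the injectivity of $\varphi$, and I would obtain it from the commutativity of diagram~\eqref{diagram.psi.ad} combined with the injectivity of $\ad\colon\al\to\Der(\al)$.

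For the commutativity it suffices to check $\psi\circ\varphi=\ad$ on the generators $v\in\Gamma_0$ of $\al$, since all three arrows are Lie algebra homomorphisms. Unwinding the definitions,
\[\psi(\varphi(v))(u) \;=\; \sum_{L\in\Omega_G^v}\psi(c_L^v)(u) \;=\; \sum_{L\in\Omega_G^v}\delta_L^u\,[v,u]\]
for every vertex $u\in\Gamma_0$. Note that $\Gamma\smallsetminus\st(v)\subseteq\Gamma_0$, because vertices of $\Gamma\smallsetminus\Gamma_0$ are by construction adjacent to everything, so the components $L$ are subgraphs of $\Gamma_0$ and the formula genuinely lives in $\al$. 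If $u\in\st(v)$, then $[v,u]=0$ in $\al$ and so $\psi(\varphi(v))(u)=0=\ad(v)(u)$; otherwise $u$ belongs to exactly one $L\in\Omega_G^v$, and the sum collapses to $[v,u]=\ad(v)(u)$. This establishes the commutativity.

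The remaining point is that $\ad$ is injective, i.e.\ that $Z(\al)=0$. Since $\al$ is a right-angled Artin Lie algebra on $\Gamma_0$, its center is generated by those vertices of $\Gamma_0$ adjacent to every other vertex of $\Gamma_0$. If $v\in\Gamma_0$ were such a vertex, then together with the fact that every vertex of $\Gamma\smallsetminus\Gamma_0$ is adjacent to every vertex of $\Gamma$ and hence to $v$, one would conclude that $v$ is adjacent to every other vertex of $\Gamma$, contradicting $v\in\Gamma_0$. Thus $\ad$ is injective, whence so is $\varphi$, and the desired short exact sequence follows. I do not expect any serious obstacle here; the only delicate point is correctly matching the coefficients $\delta_L^u$ in the computation above, which is essentially forced by how $\varphi$ and $\psi$ were rigged.
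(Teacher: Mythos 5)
Your proof is correct and follows the same route as the paper: reduce to injectivity of $\varphi$, establish the commutativity of the diagram $\psi\circ\varphi=\ad$ by checking on degree-one generators, and conclude using injectivity of $\ad$. Your added observations --- that the components $L$ lie inside $\Gamma_0$ so the computation takes place in $\al$, and the explicit verification that $Z(\al)=0$ (which the paper simply asserts) --- tighten the argument but do not change the strategy.
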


\begin{proof}
It remains to show that the diagram \eqref{diagram.psi.ad} commutes. For $w, v \in \Gamma_0$ we have:
\[ (\psi \circ \varphi (w))(v) = \sum \psi( c_L^w ) (v),\]
where the sum runs over the connected components $L \subseteq \Gamma \smallsetminus \st(w)$. By definition $\psi( c_L^w ) (v)$ can be non-zero
only if $v \in L$. If $w$ and $v$ are adjacent, then $(\psi \circ \varphi (w))(v) = 0 = \ad(w)(v)$. Otherwise, $v \in L_1$ for some
connected component $L_1 \subseteq \Gamma \smallsetminus \st(w)$, so
\[ (\psi \circ \varphi (w))(v) = \psi(c_{L_1}^w) (v) = [w,v] = \ad(w)(v).\]
So $\psi \circ \varphi(w) = \ad(w)$ for all $w$, that is, the diagram is commutes.
 \end{proof}

 Next, we generalize this to $\PAut$-like groups.

\begin{prop}\label{prop:centralextension}
Let $G\leq \PAut(A_{\Gamma})$ be a $\PAut$-like group and $I=\Inn(A_\Gamma)$ so that there is a short exact sequence of groups
\[1\to I\to G\to G/I\to 1.\]
Then there is an induced short exact sequence of graded Lie algebras
\[0\to\gr(I)\to\gr(G)\to\gr(G/I)\to 0.\]
\end{prop}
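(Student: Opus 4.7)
The plan is to generalize Proposition~\ref{grInnPAut} to the $\PAut$-like setting. First, the $\PAut$-like relators $R_G^{v,w}$ meet the hypotheses of Theorem~\ref{1formalthm}: the only nontrivial point is that each type-(ii) relator $[c_A^v, c_A^w c_{D_v^w}^w]$ requires the auxiliary commutator $[c_A^w, c_{D_v^w}^w]$ to also be a relator, and this holds via type-(i) applied to the pair $(w,w)$. Hence $G$ is $1$-formal, and Corollary~\ref{crlr1} yields a presentation of $\gr(G)$ with generators the $c_L^v$ and relators obtained from $R_G^{v,w}$ by replacing products with sums. By the same Tietze argument used in the proof of Theorem~\ref{teoB}, the presentation of $\gr(G/I)$ is obtained by further adding the degree-one relators $\sum_{L\in\Omega_G^v} c_L^v$ for each $v\in\Gamma_0$, since $I=\Inn(A_\Gamma)$ is generated by the products $\prod_L c_L^v$ and these pairwise commute.

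Next I would define $\varphi\colon \al\to\gr(G)$, with $\al=\gr(A_{\Gamma_0})$, by $\varphi(v)=\sum_{L\in\Omega_G^v} c_L^v$; well-definedness follows from case~(i) of the $\PAut$-like relations. To show that $\im(\varphi)$ is an ideal, I would check that for each generator $c_M^w$ of $\gr(G)$ the bracket $[\varphi(v), c_M^w]$ equals either $0$ or $[\varphi(v),\varphi(w)]$: subordinate and distinct-shared contributions vanish by case~(i), while the remaining shared contributions cancel against dominant ones by means of the type-(ii) relation $[c_A^v+c_{D_w^v}^v, c_A^w]=0$. Hence $\im(\varphi)$ coincides with the ideal generated by $\{\sum_L c_L^v\}_v$, which is precisely $\ker(\gr(G)\to\gr(G/I))$. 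It remains to prove that $\varphi$ is injective.

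For this, I would mimic the derivation argument by defining $\psi\colon\gr(G)\to\Der(\al)$ via $\psi(c_L^v)(w)=\delta_L^w[v,w]$ and extending by the Leibniz rule. That each $\psi(c_L^v)$ is a well-defined derivation of $\al$ follows verbatim from the proof in Proposition~\ref{grInnPAut}, since the argument only uses that $L$ is a union of connected components of $\Gamma\smallsetminus\st(v)$. The main obstacle is verifying that $\psi$ sends each $\PAut$-like relator to zero: formula~\eqref{a1} continues to hold; type-(i) relators are handled by a case analysis paralleling cases (i)--(iv) of Proposition~\ref{grInnPAut}; and for a type-(ii) relator $[c_A^v, c_A^w+c_{D_v^w}^w]$ with $A$ shared and $D_v^w$ dominant, using $v,w\notin A$, $v\in D_v^w$, and $A\cap D_v^w=\emptyset$, formula~\eqref{a1} yields the contributions $-\delta_A^u[[v,w],u]$ and $\delta_A^u[[v,w],u]$, which cancel. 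Finally, $\psi\circ\varphi=\ad$ on generators because $\Omega_G^v$ partitions $\Gamma\smallsetminus\st(v)$, so exactly one component contains $w$ when $v,w\in\Gamma_0$ are non-adjacent. Any vertex of $\Gamma_0$ adjacent to every other vertex of $\Gamma_0$ would be adjacent to every vertex of $\Gamma$ (since vertices outside $\Gamma_0$ are adjacent to all), contradicting its membership in $\Gamma_0$; hence $\al$ is centerless, $\ad$ is injective, and therefore so is $\varphi$.
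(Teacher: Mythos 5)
Your proposal is correct, but it takes a substantially longer route than the paper. The paper's entire proof is a functoriality argument: the chain of inclusions $I\leq G\leq\PAut(A_\Gamma)$ induces $\gr(I)\to\gr(G)\to\gr(\PAut(A_\Gamma))$, whose composite is the map already shown injective in Proposition~\ref{grInnPAut}; hence $\gr(I)\to\gr(G)$ is injective, and the exactness of $0\to\gr(I)\to\gr(G)\to\gr(G/I)\to0$ follows (surjectivity on the right and exactness in the middle being the routine part once injectivity on the left is known). In contrast, you rebuild the whole machinery of Proposition~\ref{grInnPAut} for the $\PAut$-like setting: $1$-formality of $G$ and $G/I$ via Theorem~\ref{1formalthm} and a Tietze argument, the explicit map $\varphi\colon\al\to\gr(G)$, and the derivation $\psi\colon\gr(G)\to\Der(\al)$ satisfying $\psi\circ\varphi=\ad$.

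Your argument is sound — the verification that the type-(ii) relators $[c_A^v,c_A^w c_{D_v^w}^w]$ meet the hypotheses of Theorem~\ref{1formalthm} (commutation of same-vertex conjugations is type (i) with $v=w$), the extension of the derivation lemma to unions of components, and the cancellation in formula~\eqref{a1} are all fine — and it has the side benefit of supplying explicit presentations of $\gr(G)$ and $\gr(G/I)$ for any $\PAut$-like $G$, together with a direct verification that $\im(\varphi)$ equals $\ker(\gr(G)\to\gr(G/I))$, which the paper leaves implicit in the phrase ``which implies the result.'' However, for the statement as given, none of this is needed: the key insight you missed is that $\varphi$ factors through $\gr(G)$ already by functoriality, so Proposition~\ref{grInnPAut} can be reused as a black box rather than re-proved.
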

\begin{proof}  By Proposition~\ref{grInnPAut}, the sequence
\begin{equation}  \label{secgr} 0\to\gr(I)\to\gr(\PAut(A_\Gamma))\to\gr(\POut(A_\Gamma))\to 0 \end{equation}
is exact. Now, for a $\PAut$-like group $G \leq \PAut(A_\Gamma)$, the inclusions
\[I\leq G\leq\PAut(A_\Gamma)\]
induce maps
\[\gr(I)\to\gr(G)\to\gr(\PAut(A_\Gamma))\]
such that the composition  $\gr(I)\to\gr(\PAut(A_\Gamma))$ is the first map in the short exact sequence \eqref{secgr},
so it is injective. This implies that the map $\gr(I)\to\gr(G)$ is also injective which implies the result.
\end{proof}

Using the notation of Section \ref{sec:presentation}, let $G=\PAut(A_\Gamma,\mathcal{G},\mathcal{H}^t)$, $A_\Delta\in\mathcal{G}$, $H=\PAut(A_\Gamma,\mathcal{G},(\mathcal{H}\cup\{A_\Delta\})^t)$ and $Q=\POut(A_\Delta,\mathcal{G}_\Delta,\mathcal{H}_\Delta)$ fitting in Day-Wade short exact sequence
\[1\to H\to G\to Q\to 1.\]
We want to check that there is a similar short exact sequence for the associated descending central Lie algebras.
Let
$\hat G=\hat P\ltimes\hat H$ with $H\cong\hat H$ and $Z\triangleleft\hat G$ with $Z\cap\hat H=1$, $Q\cong\hat G/Z\hat H$ and $\hat G/Z\cong G$. We have an exact commutative diagram:
\[
\begin{tikzcd}
&&Z\arrow[r,equal]\arrow[d,rightarrowtail] &Z\arrow[d,rightarrowtail]\\
& H\arrow[r,rightarrowtail]\arrow[d,equal]&\hat G\arrow[r,twoheadrightarrow]\arrow[d,twoheadrightarrow] &\hat P\arrow[d,twoheadrightarrow]\\
&H\arrow[r,rightarrowtail]&G\arrow[r,twoheadrightarrow]&Q\\
\end{tikzcd}
\]

By Lemma \ref{lem:welldef1}, the group $\hat P$ acts trivially on the abelianization $\hat H/\hat H'$ of $\hat H$. From \cite[Theorem~3.1]{FalkRandell} we deduce that $\gr(\hat G)=\gr(\hat P)\ltimes\gr(\hat H)$. Using this and Proposition \ref{prop:centralextension} we see that the associated descending central Lie algebras fit in an exact commutative diagram
\[
\begin{tikzcd}
&&\gr(Z)\arrow[r,equal]\arrow[d,"\tau"]  &\gr(Z)\arrow[d,rightarrowtail,"\sigma"]\\
& \gr(H)\arrow[r,rightarrowtail,"\hat\iota"]\arrow[d,equal]&\gr(\hat G)\arrow[r,twoheadrightarrow,"\hat\pi"]\arrow[d,twoheadrightarrow, "\nu"] &\gr(\hat P)\arrow[d,twoheadrightarrow]\\
&\gr(H)\arrow[r,"\iota"]&\gr(G)\arrow[r,twoheadrightarrow,"\pi"]&\gr(Q)\\
\end{tikzcd}
\]

Note first that the injectivity of the map $\sigma$ implies that $\im\tau\cap\ker\hat\pi=0$. As $\im\tau=\ker\nu$, we deduce that $\ker\nu\cap\ker\hat\pi=0$. This and the injectivity of $\hat\iota$ imply that also $\iota$ is injective so we get the desired short exact sequence of Lie algebras.
\begin{equation}\label{eq:seq}
0\to\gr(H)\to\gr(G)\to\gr(Q)\to 0.\end{equation}

Using this we deduce

 \begin{theorem}\label{polyRAAGLieSeries} Let $\mathcal{G}$ and $\mathcal{H}$ be sets of special subgroups of $A_\Gamma$. There is a series of graded subideals
 \[1=\gr(N_0)\trianglelefteq\gr(N_1)\trianglelefteq\cdots\trianglelefteq\gr(N_t)=\gr(\PAut(A_\Gamma,\mathcal{G},\mathcal{H}^t))\]
such that, if  the graph $\Gamma$ has property (*), then each quotient $\gr(N_i)/\gr(N_{i-1})$ is a RAAG Lie algebra.
 %and so that each $N_{i+1}$ acts trivially on the abelianization $N_i/N_i'$.
 \end{theorem}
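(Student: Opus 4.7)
The plan is to take the subnormal series produced by Theorem~\ref{teoC}, refine it so that every factor is itself a (finitely generated) RAAG, and then pass to associated graded Lie algebras by applying \cite[Theorem~3.1]{FalkRandell} at each step.

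More concretely, the Day--Wade induction from the proof of Theorem~\ref{teoC} displays $\PAut(A_\Gamma,\mathcal{G},\mathcal{H}^t)$ as built by iterated extensions whose ultimate factors, under property~(*), are either free abelian, trivial, or a Fouxe--Rabinovitch group (Case~5.1.1). By Lemma~\ref{connectedcomponents*}, this Fouxe--Rabinovitch factor has at most three $\mathcal{G}$-components, and by Lemma~\ref{FouRab} it is either a product of two inner-automorphism RAAGs or a semidirect product $Z\rtimes H$ of two RAAGs (the free product $Z$ of three RAAGs being itself the RAAG on the disjoint-union graph, and $H$ being a product of inner-automorphism RAAGs). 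Inserting the extra term $Z$ at this point refines the series into $1=N_0\trianglelefteq\cdots\trianglelefteq N_s=\PAut(A_\Gamma,\mathcal{G},\mathcal{H}^t)$ in which every factor $N_i/N_{i-1}$ is a RAAG. The inner-outer adjustments provided by Propositions~\ref{grInnPAut} and~\ref{prop:centralextension} fit into the same refinement, since their kernels $\gr(\Inn(-))$ are also RAAG Lie algebras.

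To lift this series to graded Lie algebras, we need to check that at each stage $N_i/N_{i-1}$ acts trivially on the abelianization $N_{i-1}/N_{i-1}'$. For the $\PAut$-like extensions coming from Case~1 of the Day--Wade procedure, this is the content of Lemma~\ref{lem:welldef1} (indeed it is precisely what was used to establish \eqref{eq:seq}); for the $Z\rtimes H$ splitting of the Fouxe--Rabinovitch factor, it is recorded as the last sentence of Lemma~\ref{FouRab}. With these triviality statements in hand, \cite[Theorem~3.1]{FalkRandell} yields short exact sequences
\[0\to\gr(N_{i-1})\to\gr(N_i)\to\gr(N_i/N_{i-1})\to 0\]
at every step, so each $\gr(N_i)$ is a graded ideal of $\gr(N_{i+1})$ and each quotient $\gr(N_i/N_{i-1})$ is a RAAG Lie algebra.

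The main delicate point will be bookkeeping: one must track the refinement through the subcases of the base step, confirm that the Falk--Randell triviality hypothesis is available at every refined stage, and verify that the resulting chain of graded subideals matches the series furnished by Theorem~\ref{teoC}. Since all of these verifications reduce to already-established results---Lemmas~\ref{lem:welldef1} and~\ref{FouRab}, together with the identification of free products of RAAGs with RAAGs on disjoint-union graphs---no genuinely new difficulty arises.
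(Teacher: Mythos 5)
Your overall strategy matches the paper's: take the subnormal series from Theorem~\ref{teoC}, pass to associated graded Lie algebras at each step, and handle the Fouxe--Rabinovitch factors via Lemma~\ref{FouRab}. The refinement you describe (inserting $Z$ so that $Z\rtimes H$ breaks into two RAAG factors) is also what the paper does implicitly.

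However, there is a genuine gap in the way you invoke Falk--Randell. You write that with the triviality statements in hand, \cite[Theorem~3.1]{FalkRandell} ``yields short exact sequences $0\to\gr(N_{i-1})\to\gr(N_i)\to\gr(N_i/N_{i-1})\to 0$ at every step.'' That theorem requires the underlying group extension to be \emph{split} (an ``almost-direct product''), and the Day--Wade extensions $1\to H\to G\to Q\to 1$ arising in Case~1 are not split in general. The paper's actual route around this is the whole content of Section~\ref{sec:presentation}: one builds abstract groups $\hat H$ and $\hat P$ with $\PAut$-like presentations, forms the \emph{genuine} semidirect product $\hat G=\hat P\ltimes\hat H$, and realizes $G$ as $\hat G/Z$ for a normal subgroup $Z$ with $Z\cap\hat H=1$. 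Falk--Randell is applied only to $\hat G$ (which does split and where Lemma~\ref{lem:welldef1} supplies the triviality hypothesis), and then the sequence~\eqref{eq:seq} for $G$ itself is extracted from $\gr(\hat G)=\gr(\hat P)\ltimes\gr(\hat H)$ by the diagram chase involving $\gr(Z)$, together with Proposition~\ref{prop:centralextension}. Your proposal gestures at this by saying that the triviality statement ``is precisely what was used to establish~\eqref{eq:seq},'' but it then treats~\eqref{eq:seq} as if it were a direct Falk--Randell output, which it is not; the non-splitness is exactly the point the $\hat G$/$Z$ machinery exists to overcome. Once you replace ``apply Falk--Randell at every step'' with ``iterate the already-established sequence~\eqref{eq:seq}, using Falk--Randell and Lemma~\ref{FouRab} only for the split Fouxe--Rabinovitch pieces,'' the argument is the paper's.
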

\begin{proof} Use  Theorem~\ref{teoC} together with an iteration of the construction of the short exact sequence (\ref{eq:seq}) and  the semidirect product decomposition of Lemma \ref{FouRab} for the Fouxe-Rabinovitch case. Note that in all cases the subideals are generated by sums of generators (of degree $1$) of $\gr(\PAut(A_\Gamma,\mathcal{G},\mathcal{H}^t))$, so that the sequence is indeed of graded Lie algebras.
\end{proof}

\begin{proof}[Proof of Theorem~\ref{teoA}]
If the graph $\Gamma$ satisfies (*), then by Theorem~\ref{polyRAAGLieSeries} the Lie algebra $\gr(\PAut(A_{\Gamma}))$  admits a graded series of subideals whose factors are the graded Lie algebras associated to (finitely generated) RAAGs. These are Koszul by \cite{Froberg}, thus $\gr(\PAut(A_{\Gamma}))$ is Koszul by Proposition~\ref{polyRAAGLie}. The converse is Proposition~\ref{obstruction}.
\end{proof}

\section*{Acknowledgements}
Part of this work was carried out during the XXVII Brazilian Algebra Meeting, which provided us the opportunity to discuss it in person. We thank the organizers for their invitation and support.
Moreover, the first named author was supported by the Spanish Government PID2021-126254NB-I00 and Departamento de Ciencia, Universidad y Sociedad del 
Conocimiento del Gobierno de Arag{\'o}n (grant code: E22-23R: ``{\'A}lgebra y Geometr{\'i}a'') and the second named author was supported by grants 2015/22064-6 and 2016/24778-9, from São Paulo Research Foundation (FAPESP) and by grant FAPEMIG [APQ-02750-2].

%%%%%%%%%%%%%%%%%%%%%%%%%%%%%. BIBLIOGRAPHY

\end{document}